\theoremstyle{plain}
\newtheorem{theorem}{Theorem}[section]
\newtheorem{lemma}[theorem]{Lemma}
\newtheorem{corollary}[theorem]{Corollary}
\newtheorem{remark}[theorem]{Remark}
\numberwithin{equation}{section}
\newtheorem{proposition}[theorem]{Proposition}
\newtheorem{example}[theorem]{Example}
\newtheorem{problem}[theorem]{Problem}
\newtheorem{question}[theorem]{Question}
\newtheorem{definition}[theorem]{Definition}
\numberwithin{equation}{section}
\newsavebox{\savepar}
\newcommand{\id}{{\rm Id}} 
\newcommand{\C}{{\mathbb C}} 
\newcommand{\N}{{\mathbb N}} 
\newcommand{\R}{{\mathbb R}} 
\newcommand{\Z}{{\mathbb Z}} 
\newcommand{\Q}{{\mathbb Q}} 
\newcommand{\chat}{{\widehat \C}}
\newcommand{\cale}{{\mathcal E}} 
\newcommand{\calp}{{\mathcal P}} 
\newcommand{\calm}{{\mathcal M}} 
\newcommand{\calk}{{\mathcal K}}
\begin{document}

\title{\bf Dynamics of meromorphic  functions outside a
  countable set of essential singularities} 

\author{P. Dom\'{\i}nguez $^{a}$*, M.A. Montes de Oca$^{a}$ and G. Sienra $^{b}$ \\
{\small $^{a}$ {\it Fac. de Ciencias F\'{\i}sico-Matem\'aticas., BUAP., C.U., Puebla Pue. 72570   M\'exico.}}\\
{\small $^{b}$ {\it Facultad de Ciencias, UNAM.  Av. Universidad 3000, C.U. M\'exico, D.F., 04510, M\'exico.}}}
\date{}

\maketitle
\begin{center}
{\it Dedicated to Professor I.N. Baker}
\end{center}

\maketitle

\begin{abstract} 

We consider  a class of  functions, denoted by $\calk$ in this paper, which are meromorphic outside a compact and countable set $B(f)$, investigated by A. Bolsch in his thesis in 1997. The set $B(f)$ is the closure of isolated essential singularities. 
We review main definitions and  properties of  the Fatou and Julia sets of functions in
class $\calk$. It is studied the role of $B(f)$ in this context. Following Eremenko it is defined escaping sets and we prove some  results related to them. For instance, the dynamics of a function is extended to its singularities using escaping hairs. We give  an example of  an  escaping hair with a wandering singular end point, where  the hair is contained in a wandering domain of $f \in \calk$.


\renewcommand{\thefootnote}{} 
\footnote{{\em The authors were supported by CONACYT  project number 253202.}}  
\footnote{2000 {\it Mathematics Subject Classification}: Primary 37F10,
Secondary 30D05.} 
\footnote{{\it Key Words: Iteration, Fatou set, Julia set and  Escaping set, Baker domain, wandering domain}}
\addtocounter{footnote}{-2}            
\end{abstract}

 
\section{Introduction}

The iteration of complex analytic functions has its origins in two detailed 
examinations of Newton's method. The first paper  containing two parts (1870 and 1871), \cite{scho1} \cite{scho2}, was written by the German mathematician E. Schr\"{o}der 
(1841-1902) and the second  \cite{cayley} (appeared in 1879) by the British mathematician A. Cayley (1821-1895).

Schr\"{o}der and Cayley each studied the convergence of Newton's method for 
the complex quadratic function. Since then,  many other mathematicians have 
worked  on the problem of analytic iteration and the iteration of complex 
functions, see e.g. \cite{alexander}.
 
In  1907 P. Montel (1876-1975), a French mathematician, received his 
doctorate in Paris which was related to infinite sequences of both real 
and complex  functions. A few years later Montel worked in complex function theory and introduced the theory of normal families. In his papers, published in 
1912 and 1916, he treated Picard's theory. Montel considered his study of 
Picard's theory to be one of  the most important applications of his theory of normal families. Montel's theory of normal families  was quite important in the iteration of 
analytic functions.

Between 1918 and 1920, two French mathematicians, 
P. Fatou (1878-1929) and G. Julia (1893-1978) obtained several results
 related to  the  iteration of rational functions of a single complex variable,
\cite{fatou0}, \cite{julia}. Each of them, based his approach on Montel's
theory of normal families. The main objects of the theory are the Stable set
where the iterates form a normal  family and the Chaotic set which is the
complement of the Stable set. From now on, as it is usual in this area, we will call the Stable set as the
Fatou set and the Chaotic set as the Julia set (of an holomorphic  function $f$) and denote them by $F(f)$  and $J(f)$ respectively.

In 1926 Fatou \cite{fatou} extended some  results of the Fatou and Julia sets
from the rational case to the transcendental entire case. He also  studied the dynamics of the
function $f(z) = e^{-z} +z +1$, for  which the sequence of iteration of points in the Fatou set converges
to $\infty$. Since $\infty$ is an essential singularity of $f$, the component in the Fatou set with such behaviour was called {\it domain of indetermination}. Fatou put forward the conjecture that the Julia set of the exponential map is the Riemann sphere ($J(e^z) = \chat$) and leave
many other  open questions. 

In 1953, H. R$\stackrel{\circ}{\rm a}$dstr\"{o}m  \cite{radstrom} working on
dynamics, showed that among planar domains with  holomorphic self-maps 
$f: D \rightarrow D$, the most dynamically interesting cases are: 
$D = \chat$, if $f$ rational; $D = \C$, if $f$ entire; or 
$D = \C^* = \C \setminus \{ 0 \}$, if $f$ is an endomorphism of the punctured plane. In all other cases every point of 
$D$ is a stable point for $f$.

Some years later,  in 1955,  I.N. Baker used the theory of iteration of analytic
functions, developed mainly by  Fatou and Julia, in his results \cite{Ba},
\cite{baker0} and \cite{baker1a} Baker showed that some dynamical
properties of entire functions are quite different  from those of rational
maps. In 1970 Baker  gave the  first example in  \cite{baker3a}
of an entire function for which the Julia set is the whole Riemann sphere, he
proved that $J(\lambda ze^z) = \chat$ for suitable value of $\lambda$. Later
in \cite{misi}  Misiurewicz proved Fatou's conjecture.   

The connected components in the Fatou set can be either periodic, pre-periodic or wandering, see Section \ref{section3} for definitions. An open problem since Fatou and Julia was the existence of wandering domains. In 1976 Baker (1932-2001) in \cite{bakerw} proved that 
a transcendental entire function with  a multiply connected
Fatou component must be  wandering. In 1985 D. Sullivan \cite{sullivan}  
showed  that for the rational case there were not wandering domains in the
Fatou set, the main  tools in his proof  were Riemann surface theory, planar
topology, prime ends and  the  theory  of quasi-conformal mappings. The last    
concept  was introduced by A. Gr\"{o}tzch \cite{gro}, developed by Teichm\"uller
\cite{teich}, used and  called as we know now by L. Ahlfors in  \cite{ahlfors}. 

There is a  kind  of classification of periodic components in the Fatou set  started by  Cremer  \cite{cremer} and  Fatou  \cite{fatou0}, see for a discussion \cite{bergweiler1}.  Currently,  the classification  due  to I.N. Baker, J. Kotus and L\"u   \cite{baker11}  states  that a periodic component   can be either attracting, parabolic, Siegel disk, Herman ring or indetermination domain.

Using techniques  of quasi conformal surgery, Shishikura in
\cite{shishi} constructed examples of rational functions which have multiply connected rotation domains with the rotation number irrational, these  domains are
Herman rings. For  transcendental entire functions  there are
not Herman rings. The results  mentioned above show that the dynamics of the 
rational  case and the transcendental entire case are quite different.

For the rational  functions  the set of critical values is finite, so with this idea  A. Eremenko and M. Lyubich in \cite{eremenkolyubich} investigated the class of transcendental entire functions, where the set of singular values is finite (asymptotic and critical values). They proved that for functions in their class there are neither  wandering domains nor indetermination domains. The last domains  were called {\it Baker
domains} for the first time  by the authors in  \cite{eremenkolyubich}, thus from now on in this paper we will refer to this name.

Going back to the analytic functions in $\C^*$,  we want to mention some
mathematician who were working on the dynamics of these functions. 
P. Battacharyya  in his PhD thesis \cite{bhattacharyya}, studied the dynamics
of those functions. Later on L. Keen \cite{keen}, J. Kotus \cite{kotus}  and P. Makienko \cite{makienko}  also investigated  the dynamics  of functions in $\C^*$.

We know by definition  that a transcendental meromorphic function  is analytic
in the whole plane except at  poles, each one of those poles goes to infinity
and infinity is either an essential singularity or an accumulation  point of poles.  It was a natural question to ask if for a transcendental meromorphic function, the Fatou and Julia sets could be defined. Between 1990 and 1991  I.N Baker, J. Kotus, and  Y.  L\"{u}   produced  four papers \cite{baker11, baker12, baker13, baker14} in which they investigated the dynamics of transcendental meromorphic functions. For a general survey of transcendental  meromorphic functions, the readers are referred  to \cite{bergweiler1}.

A natural  generalization  is to study the iteration of functions that are 
analytic outside a compact set of singularities, this set is in some 
sense small. A. Bolsch \cite{andreas} and M. Herring \cite{herr} in their PhD
thesis investigated such functions.

Bolsch in  \cite{bolsch, andreas, bolsch1}, investigated the
iteration of analytic functions  outside a compact countable set, which is the closure of isolated essential
singularities. In this  class infinity may  not  be an essential
singularity, so the way to treat this  kind of functions is different to either rational, transcendental entire or transcendental meromorphic functions.

As we can see, the historical development of the study of complex functions 
starts with basic concepts of holomorphic functions on the Riemann sphere 
concerning the Fatou and Julia sets.  However, there are other kind of functions which have been studied extending the theory  of  holomorphic dynamics,  such as
\cite{crowe}, \cite{cruz}, \cite{go}, \cite{jeff} and  \cite{milnor} specially page 17.

In this paper  we want  to continue the research started in \cite{andreas} of a class of functions defined on subsets of the Riemann sphere. 
We can think this class as the smallest semi group with the composition operation,  containing all transcendental meromorphic functions. We denote this class by $\calk$ and we will refer to this kind of functions as $\calk$-meromorphic  functions.

In this  paper the topics are divided as follows: in Section
\ref{section2} we define and study classes of functions, the set of
singularities,  exceptional values and examples of the
definitions mentioned before. Section \ref{dos-2} contains  the iteration of the functions given in Section  \ref{section2} and a classification of their compositions. In Section
\ref{section3} we review the definitions and properties of the Fatou and Julia sets for functions in class
$\calk$ and give some well known  properties for those sets, including the classification of the  periodic components of the
Fatou set. In Section 5 we show some   results related to
the dynamics of functions in class $\calk$. In Section 6
the escaping sets given  by Eremenko  in
\cite{eremenko1} are generalized in two different kinds: $I_e(f)$ which is the escaping set to a specific singularity $e\in B(f)$ and the general escaping set $I_g(f)$ related to the whole $B(f)$. We also classify the dynamics in the general escaping set $I_g(f)$ and extend the dynamics of $f$ to the singularities using curves contained in $I_g(f)$.  Section 7  contains examples of
Baker and wandering domains  for functions in class $\calk$, we use such examples to construct escaping hairs with wandering singular end-points.

 \section{ $\calk$-meromorphic  functions}
\label{section2}

In what follows we shall denote the complex plane by $\C$,  the sphere  
by $\chat$ and the punctured plane by  $\C^* = \chat \setminus \{0,
\infty\}$. We consider the following classes of functions. 
 
\begin{description} 



\item $\cale = \{f: \C \to \C \mid \text{ $f$ is transcendental entire}\}$.

\item $\calp = \{f:\C^* \to \C^* \mid \text{ $f$ is transcendental
    holomorphic}\}$. 

\item We shall distinguish the following two types of  maps in  class $\calp$.
\begin{description} 
\item $\begin{array}{lcl} 
\calp_1 &= &\{ f\in \calp \mid  \text{0 is a pole and an omitted value}\} \\\\
 &=& \{ f(z)=\frac{e^{h(z)}}{z^n}, \text{\  $h(z)$ entire non-constant, $n\in \N$}\}.  
\end{array}$

\item $\begin{array}{lcl} 
\calp_2 &=& \{ f\in \calp \mid  \text{0 is an essential singularity}\}\\\\ 
&=& \{f(z)= z^{n} e^{(g(z)+ h(1/z))}, \text{\ $g$, $h$ entire non-constant, $n\in\Z$}\}. 
\end{array}$  
\end{description} 
\end{description}

\noindent We recall  that a function $f$ is meromorphic  in $\C$ if $f$ is analytic except at  {\it poles}. 
\begin{description}

\item $\calm = \{f: \C \to \chat \mid \text{$f$ is transcendental  meromorphic 
    with one not omitted pole}\}$. 
\end{description}

\noindent A class of functions which contains  all the classes mentioned above and their
iterations is defined as follows.

\begin{description} 
\item  $\calk = \{f: \chat \setminus B(f) \to \chat \mid  f \text{ is  not constant and 
    meromorphic in $\chat \setminus B(f)$} \}$, 
\end{description}

\noindent where the set $B(f)$ is a compact countable set and it is the closure of the set of isolated essential singularities of $f$. We assume that $B(f)$ has at least one  element. In other words, $f\in \calk$ is analytic in 
$\chat  \setminus B(f)$ except at poles. We are calling  this functions {\it $\calk$-meromorphic}.

Examples of  the above classes of functions which for their dynamics have been studied, are  the following:

\noindent (a)  $f_{\lambda}(z) = \lambda e^{z}$ and
$f_{\lambda}(z) = \lambda \sin z$ with   $\lambda \in \C^*$, for functions in class $\cale$; 

\noindent (b)  $f_{\lambda}(z) = \lambda e^{z}/z$, $\lambda \in \C^*$,  and
$f_{\alpha}(z) =e^{\alpha(z - 1/z)}$, where $ 0 < \alpha <  1/2$, for
functions in class $\calp$; 

\noindent (c)  $f_{\lambda}(z) = \lambda \tan z$, 
$f_{\lambda, \mu}(z) = \lambda e^{z} + \mu/z$   and
$f_{\lambda, \epsilon, p}(z) = \lambda \sin z + \frac{\epsilon}{z-p}$ where $p
\in \C$,  and $\mu, \lambda, \epsilon, \in \C^*$, for functions in class $\calm$; 

\noindent (d) $\lambda e^{R(z)} + \mu$ with   
$R(z)$ either a rational function  or a transcendental  meromorphic function,  and $f_{\lambda, \mu}(z) = \tan(\lambda \tan(\mu
z))$, where  $\lambda, \mu \in \C^*$,   for functions in class $\calk$.

\begin{remark}(a) For a function $f$ either in class $\calp, \cale$
 or $\calm$ we know that $\infty$ is an essential singularity or an accumulation of poles, so it is in the
set $B(f)$. 

\noindent (b)  The classes $\calp$, $\cale$ and $\calm$ are disjoint. 
\end{remark}

From now on we will work with functions in class $\calk$,  otherwise we will 
mention the class of functions we are working on. In what follows we will give 
several definitions related to the class $\calk$.

The set of {\it singular values} of $f \in \calk$, denoted by $SV(f)$,  is defined as follows: 

$$SV(f)= \overline{C(f) \cup  A(f)},$$ 

\noindent where $C(f)$ is the set of  critical values of $f$ and $A(f)$ is the
set of asymptotic values of $f$.  We recall that a {\it critical value or an
  algebraic  singularity} is the image of a critical  point. A  point
$a\in\chat$ is called an {\it asymptotic value or a transcendental singularity}
of $f$ at $e$ if there is a path $\gamma(t)\to e$ as $t\to \infty$, such that $f(\gamma(t))\to a$, where $e \in B(f)$. In this case, for every  punctured disc $D_a$ of  $a$, there is an open set $U_a\subset \chat$, such that the restriction $f:U_a\rightarrow D_a$ is an infinite (possibly branched) covering and moreover, there are infinitely many paths $\gamma_i (t)\rightarrow e$, as $t\rightarrow\infty$, such that $f(\gamma_i(t))=f(\gamma(t)))$. We say that $U_a$ is a {\it tract over} $D_a$.

If  $a$  has a simply connected neighbourhood $V$ such that for some component $U$ of the set $f^{-1}(V)$ the map $f: U \to V\setminus \{a \}$ is a universal covering, then  $a$ is called a {\em logarithmic branch point} and $U$ is called an {\em exponential tract}.

We say that $w$ is an {\it omitted value} of $f\in\calk $ if  for every $z \in \chat \setminus B(f)$ we have $f(z) -w = 0$ has no solutions and that $w$  is a {\it Picard exceptional} value of $f$ if $f(z) - w = 0$ has finite solutions. Observe that if $w$ is an omitted value of $f$, then $w$ is a Picard exceptional value of $f$. 

 \begin{definition} 
Let $f\in \calk$  we say that $z_0\in \chat$ has the Picard Property if every
punctured neighbourhood of $z_0$  (in the domain of $f$) is mapped into the Riemann sphere infinitely many times except for at most two points which are   called
local Picard exceptional values of $f$ at $z_0$.
\label{Picard}
\end{definition}

\begin{remark} (a) Let $f\in \calk$. The point $e\in \chat$ has the Picard Property if
  and only if $e\in B(f)$.

(b) If $f \in \calk$  and  $e \in B(f)$, then $e$ is   either an essential isolated singularity or an accumulation of those singularities.

(c) By the Picard Property,  functions in class $\calk$ may have at most two Picard exceptional values. 

(d) A point $w$ is a Picard exceptional value of $f$ if and only if $w$ is a local Picard exceptional value of all singularities. 
\end{remark}

Examples of functions having omitted values, Picard exceptional values and local Picard exceptional values are the following: 

(i) The exponential  map $e^z$ has two
omitted values, zero and $\infty$ which are also Picard exceptional values;

(ii) The map $e^z/z$   has $\infty$ as a Picard exceptional value but it is 
no an omitted value, while zero is an omitted value;

(iii) The map $e^{(z - 1/z)}$, has two omitted
values, $0$ and $\infty$,  such points are also essential singularities.

(iv) If $f(z) = e^{1/z} \tan z$, then $B(f) = \{ 0, \infty\}$ and  the set of
local Picard exceptional values at zero is $\{ 0, \infty\}$. Since $f(k\pi) =
0$  for all $k \in \Z \setminus \{0\}$ and $f((2k-1)\pi /2) = \infty$ for all
$k \in \Z$, therefore,  $0$ and $
\infty$ are not Picard exceptional values of $f$.
\hspace{.5cm}

\begin{problem}   Is there an example of a  function in class $\calk$, such
  that  it has  local Picard exceptional values at each essential singularity
  but not  Picard exceptional values? Observe that such example cannot be  meromorphic.
\end{problem}

\begin{problem} Are the asymptotic values $\pm i$ Picard exceptional values of $f$ in  Example (iv)?
\end{problem}

If  $f$ is a function,  the sequence  formed by its {\it iterates} is denoted
by $f^0:= \id$, $f^n:= f \circ f^{n - 1}$, $n \in \N$, where $\circ$ means
composition and  $f^n$ is the {\em  n-iterate} of $f$.\\

Given $f \in \calk$ we  define the {\it forward orbit, the backward orbit} and
{\it the grand  orbit}  of a point $z$ as $O^+(z) = \{ w: f^n(z) = w$ for  $n
\in {\bf N} \}$, $O^-(z) = \{ w: f^n(w) = z$ for some $n \in {\bf N} \}$ and
$O(z) = O^+(z) \cup O^-(z)$ respectively. Also, we define $E(f)$  as the set
of {\it Fatou exceptional values} of $f$, this is,  points whose inverse
orbit $O^-(z)$ is finite. For instance,  take the map $f(z) = e^z$, the Fatou
exceptional values of $f$  are zero and $\infty$ (both are omitted
values).  Now observe the map $h(z)= - e^z + 1/z$, it   has no omitted values,
the Picard exceptional value is $\infty$ but it is not a Fatou exceptional
value since $f^{-n}(\infty)$ is not finite, so $E(h)=\emptyset$. \\

As a result   of the Iversen's Theorem the corollary in \cite{zhengbook} states the following.

\begin{corollary}
Any Picard exceptional value  of a transcendental  meromorphic function  is
also an asymptotic value. Hence, a transcendental entire function must have
$\infty$ as its asymptotic value
\label{asymtotic}.
\end{corollary}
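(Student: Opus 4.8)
The plan is to read the statement off from Iversen's theorem, which asserts that if $g$ is meromorphic in a punctured neighbourhood $\{|z|>R\}$ of $\infty$ with an essential singularity there, then every value omitted by $g$ in $\{|z|>R\}$ is an asymptotic value of $g$ at $\infty$; that is, there is a path $\gamma(t)\to\infty$ with $g(\gamma(t))$ tending to that value. The only thing one has to supply is the passage from the hypothesis ``$a$ is a Picard exceptional value of $f$'' to an associated function that genuinely \emph{omits} a value near $\infty$.

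So let $f$ be transcendental meromorphic and let $a$ be a Picard exceptional value; then $f^{-1}(a)$ is a finite subset of $\C$, and we fix $R>0$ with $f^{-1}(a)\subset\{|z|<R\}$. If $a=\infty$ this says $f$ has only finitely many poles, so $f$ is holomorphic in $\{|z|>R\}$, omits $\infty$ there, and has an essential singularity at $\infty$ (otherwise $f$ would be rational, contradicting transcendence); Iversen's theorem then provides a path along which $f\to\infty$. If $a\neq\infty$, put $g=1/(f-a)$: since $f-a$ has no zeros in $\{|z|>R\}$, $g$ is holomorphic there (the poles of $f$ become removable zeros of $g$), $g$ omits $\infty$ in $\{|z|>R\}$, and $\infty$ is an essential singularity of $g$ (otherwise $g$, hence $f=a+1/g$, would be rational); Iversen's theorem now gives $\gamma(t)\to\infty$ with $g(\gamma(t))\to\infty$, i.e. $f(\gamma(t))\to a$. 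In either case $a$ is an asymptotic value of $f$.

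The genuine content is Iversen's theorem itself, and I would only indicate its proof. With $h$ the relevant function -- holomorphic in $\{|z|>R\}$ and omitting $\infty$ -- one first notes that $|h|$ is unbounded near $\infty$ (otherwise $\infty$ would be a removable singularity), chooses $R$ so that $|h|\le K$ on $|z|=R$ and an increasing sequence $\lambda_1>K$, $\lambda_n\to\infty$, and observes via the maximum principle that every component of $\{|z|>R:|h(z)|>\lambda_n\}$ is unbounded. The theory of direct singularities (or a Phragm\'en--Lindel\"of argument) then yields a nested sequence of such components $\Omega_1\supset\Omega_2\supset\cdots$ on which $|h|\to\infty$, and a Koebe arc / normal-families argument extracts, inside $\bigcap_n\Omega_n$, a curve $\gamma(t)\to\infty$ along which $|h|\ge\lambda_n$ for all $n$, hence $h\to\infty$. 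The one point that requires care is that $\gamma$ accumulate only at $\infty$ and not on the circle $|z|=R$ nor on a bounded continuum; this is exactly what the unboundedness of the tracts $\Omega_n$ secures.

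Finally, the closing assertion is an immediate special case: a transcendental entire function $f\colon\C\to\C$ never takes the value $\infty$, so $\infty$ is an omitted -- hence Picard exceptional -- value of $f$, and the first part gives that $\infty$ is an asymptotic value of $f$.
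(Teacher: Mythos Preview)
Your proposal is correct and follows the same route the paper indicates: the paper does not give its own proof but simply records the statement as a corollary of Iversen's theorem taken from \cite{zhengbook}. You have merely supplied the details of that deduction (the reduction to an omitted value via $g=1/(f-a)$ and the application of Iversen), together with a sketch of Iversen's theorem itself, which is more than the paper provides.
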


The above corollary can be extended to functions in class $\calk$, in the sense that every local Picard exceptional value of $f$ is an asymptotic value of $f$. It is clear
that omitted values are Fatou exceptional values which are Picard exceptional
values and by the Corollary  \ref{asymtotic}, they are asymptotic values. These observations are resumed in the diagram in  Figure \ref{diagrama} which  shows the proper contentions between the sets of different kind of values.

\begin{figure}[h] 
\centerline{\includegraphics[height=6cm]{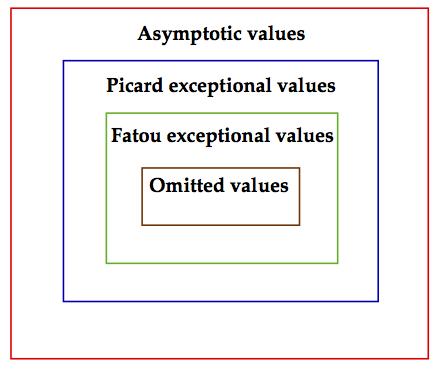}}
\caption{\small Contentions  \label{fig1}}
\label{diagrama}
\end{figure} 

\section{Iteration of functions in class $\calk$}
\label{dos-2}

When we iterate a function $f \in  \cale$ it  can be proved that $ f^n  \in
\cale$ and $B(f^n)=\{\infty\}$ for all $n\in \N$.  Functions in class $\calp_1$ have an omitted pole and one essential singularity,  observe that under the second iteration the omitted pole  becomes an essential  singularity. Thus the new function has two essential singularities. For instance, the map $f(z)=  e^z/z$  has $B(f)=\{\infty\}$ and $B(f^n)=\{\infty,0\}$ for all $n\geq 2$. For $f \in \calm$ is not true that  $f^n  \in  \calm$, in this case, $B(f)=\{\infty\}$ but, due to the existence of poles, $B(f^n)$ has more than one point for all 
$n\geq 2$.

Bolsch in \cite{andreas} proved that for functions $f$, $g$ in class $\calk$  the composition 
$f \circ g$ is in $\calk$, where $B(f\circ g) = B(g) \bigcup g^{-1}(B(f))$. For $n \in \N$ we denote
$f^{-n}(B(f)) = \{z: f^n(z) = e \in B(f) \}$.

The following theorem follows from \cite{bolsch} and \cite{herr}.

\begin{theorem}
 If $f \in \calk$, then
  $f^n\in\calk$ and for each  $n \geq 1$ the natural boundary of $f^n$ is the set  
  
$$B_n = B(f^n) = \bigcup_{j=0}^{n-1} f^{-j}(B(f)),$$

so that $f^n$ is meromorphic function in the region 

$$D_n = \chat \setminus B_n.$$ 

Further, the sets  $B_n$ are all  compact and countable. Moreover,  $f^n$ cannot be continued meromorphically over any point of $B_n$.
\end{theorem}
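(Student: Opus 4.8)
The plan is to argue by induction on $n$, using as the single nontrivial ingredient Bolsch's composition formula recalled just above: if $f,g\in\calk$ then $f\circ g\in\calk$ and $B(f\circ g)=B(g)\cup g^{-1}(B(f))$. The base case $n=1$ is immediate from the definition of $\calk$: here $B_1=f^{-0}(B(f))=B(f)$, the map $f$ is by hypothesis meromorphic on $\chat\setminus B(f)$, the set $B(f)$ is compact and countable, and $f$ admits no meromorphic continuation over $B(f)$ since $B(f)$ is by definition the closure of the isolated essential singularities of $f$.

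For the inductive step I would write $f^{n}=f\circ f^{n-1}$, note that $f^{n-1}\in\calk$ by the inductive hypothesis, and apply Bolsch's formula with $g=f^{n-1}$ to get $f^{n}\in\calk$ together with $B(f^{n})=B(f^{n-1})\cup (f^{n-1})^{-1}(B(f))$. Substituting the inductive hypothesis $B(f^{n-1})=\bigcup_{j=0}^{n-2}f^{-j}(B(f))$ and observing that, under the paper's convention $f^{-m}(B(f))=\{z:f^{m}(z)\in B(f)\}$, the term $(f^{n-1})^{-1}(B(f))$ is exactly $f^{-(n-1)}(B(f))$, one obtains $B(f^{n})=\bigcup_{j=0}^{n-1}f^{-j}(B(f))=B_{n}$. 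The claims that $f^{n}$ is meromorphic on $D_{n}=\chat\setminus B_{n}$ and that $B_{n}$ is compact and countable then come for free from $f^{n}\in\calk$; if an independent check is wanted, each $f^{-j}(B(f))$ is the preimage under the nonconstant meromorphic map $f^{j}$ of the countable set $B(f)$, hence has discrete fibres and is countable, a finite union of such sets is countable, and the union is closed in the compact sphere $\chat$.

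For the natural-boundary clause I would again invoke $f^{n}\in\calk$: $B_{n}=B(f^{n})$ is by definition the closure of the set of isolated essential singularities of $f^{n}$, and no function can be continued meromorphically over an isolated essential singularity, nor over a limit point of such singularities (every neighbourhood of such a limit point contains some of them). Hence $f^{n}$ cannot be continued meromorphically over any point of $B_{n}$, and the induction closes.

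I do not expect a genuine obstacle; the only thing demanding care is the bookkeeping identifying $(f^{n-1})^{-1}(B(f))$, $f^{-1}(B(f^{n-1}))$, and the iterated preimages $f^{-j}(B(f))$, which all agree because $f^{j+1}=f\circ f^{j}=f^{j}\circ f$ on the relevant domains, so that $z\in f^{-1}(f^{-j}(B(f)))$ if and only if $f^{j+1}(z)\in B(f)$. One should also keep in mind that $f^{j}$ is meromorphic precisely on $D_{j}$, but since $B_{j}\subseteq B_{n}$ for $j\le n$ this causes no difficulty in the identity above. The whole mathematical weight of the theorem is carried by Bolsch's composition result, which is already available to us.
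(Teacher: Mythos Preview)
Your proposal is correct and matches the paper's own approach: the paper does not give a detailed proof but simply states that the theorem follows from \cite{bolsch} and \cite{herr}, immediately after recording Bolsch's composition formula $B(f\circ g)=B(g)\cup g^{-1}(B(f))$, which is exactly the ingredient you induct on. Your bookkeeping with the iterated preimages and the appeal to $f^n\in\calk$ for compactness, countability, and the natural-boundary clause are all in order.
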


We say that $p$  is a {\it pre-pole of order $n$} in case that  $p \in  f^{-n}(\infty)$, for some $n \geq 2$.\\

We say that  $p$  is a {\it pre-singularity of order $n$} if $e \in B(f)$ and $p \in  f^{-n}(e)$,  for some $n \geq 1$. Observe that for the function $-e^z+1/z$ zero is a pole and a pre-singularity of order one, while the function $e^{1/z}+1/(z-1)$ has one as pole and it is not a pre-singularity of any order.

From now on, we will denote the set of all pre-singularities and singularities  of $f \in \calk$ by  $B^{-}(f) = \bigcup_{j=0}^{\infty} f^{-j}(B(f))$. Observe that   the set $B^{-}(f)$ is countable. \\

In what follows we will  define the following classes of functions.

\begin{description} 

\item $\calk_0 = \{ f \in \calk \ | \; \# B(f) = 0 \}$;

\item $\calk_n = \{ f \in \calk| \; \# B(f) = n, n \in \N\}$;

\item $\calk_{\infty} = \{ f \in \calk|  \; \# B(f) = \infty \}$.

\end{description}

The symbol $\# B(f)$ means the number  of isolated essential singularities of
the function $f\in \calk$.

\begin{remark} (a) Functions in classes $\cale$,  $\calp_1$  and $\calm$  are in $\calk_1 = \{ f \in \calk| \; \# B(f) = 1 \}$, for such functions $B(f)= \{\infty \}$. If $e \in B(f)$ it can be normalized to be $\infty$, in this case $\calk_1 = \cale \cup \calp_1 \cup \calm$.

\noindent (b) Functions in  class $\calp_2$ are in $\calk_2 = \{ f \in \calk| \; \# B(f)
= 2 \}$, where $B(f)= \{\infty, 0\}$. If $f \in \calk_2$, $B(f) = \{e_1, e_2
\}$ and $e_1, e_2$ are omitted values, so   they can be normalized to be $\{\infty, 0\}$.
\end{remark}

\begin{remark} 

\noindent(a) If $f\in \cale$, then $B^-(f)=\{\infty\}$.

\noindent (b) If $f \in \calp_1$, then $B^-(f)=\{\infty,0\}$ ($0$ is an omitted pole of $f$).

\noindent (c) If $f \in
  \calp_2$, then $B^-(f)=\{\infty,0\}$ ($0\in B(f)$ and $0$ is an omitted value). 

\noindent (d) After normalization we have that, if $f\in \calk \setminus (\cale\cup \calp)$, then $B^-(f)$ is an infinite set.
\end{remark}

For functions $f \in \calk_n$ and $g \in \calk_m$, where $n,m \in \N \cup \{0, \infty \}$, the composition 
 $f \circ g$  (or $g \circ f$) is in  $\calk_r$ for some $r \in \N \cup \{0, \infty \}$. To calculate $r$ we should consider several cases, for instance the ones given below.\\

 Case I. $f \in \calk_0$ and $g \in \calk_1$. 
 
 (a) The composition $f \circ g$ is in $\calk_1$.  We recall that $\calk_1$ is the disjoint union of $\cale, \calp$ and $\calm$, thus the  the composition of  $f \circ g$ can be in any of them.  For instance: take  $f(z) = z^n + 1$  a polynomial in $\calk_0$ and $g(z) = e^z/(z - 1)$  a function in class $\calm$.  The composition $(f \circ g)(z)  =  e^{nz}/(z - 1)^n  + 1$, thus $(f \circ g)(z)  \in \calp_1 \subset   \calk_1$. 
 
(b) Now, the composition  $g \circ f$ is in $\calk_r$, where $r$ is the number of poles of $f$ without multiplicity. In the special case that $f$ is a polynomial, $f(z)=\infty$ implies that $z=\infty$ and so $g \circ f \in \calk_1$.  In contrast with the previous example, if $f(z)=z^2/(z-1)\in \calk_0$ and $g(z)=e^z$, then $(g\circ f)(z)=e^{z^2/(z-1)}\in \calk_2$ and $B(g\circ f)=\{\infty,1\}$, while $f\circ g\in \calk_1$. Observe that the singularity $1$ is not omitted by $g\circ f$, so after normalization, $g\circ f \in\calk_2\setminus\calp_2$.

Case II. $f \in \calk_0$ and $g \in \calk_n$.

 The composition $f
\circ g$ is in  $\calk_n$. For instance, given $g\in\calk_2$, the composition $f
\circ g$ is in $\calp_2$ if and only if $f(z) = \lambda z^m$ for some $m \in
\Z \setminus \{ 0\}$ and $\lambda \in \C^*$. We recall that $B(g)= \{\infty,
0\}$ after normalization.\\

The following proposition does not need a proof since it is clear by the
definitions and the basic concepts presented above. 

\begin{proposition}  After normalization, the following cases hold:

I. For  $f \in K_1$ and  $B(f)= \{ \infty \}$ we have:

(i) If  $f \in \cale$, then  $B(f^n)= \{ \infty \}$, for $n \in \N$.

(ii) If  $f \in \calp_1$, then  $B(f^n)= \{ \infty, 0 \}$, for  $n \geq 2$.

(iii) If  $f \in \calm$, then  $\# B(f^n)= \infty$, for $n \geq 3$.\\

II. For $f \in K_2$ and $B(f)= \{ \infty, 0 \}$ we have:

(i) If  $f \in \calp_2$, then  $B(f^n)= \{ \infty, 0 \}$, for  $n \in \N$.

(ii) If  $f \in K_2 \setminus \calp_2$, then  $\# B(f^n)= \infty$, for  $n \geq 3$.\\

 III. For  $f \in K_m$ and  $m \geq 3$,  the number of essential singularities is  $\# B(f^n)= \infty$, for all  $n \geq 2$. \\
 
IV. For $f\in\calk \setminus \{\ \cale,\calp\}$,  $f^n \in K_{\infty}$ for all $n \geq 3$.
\end{proposition}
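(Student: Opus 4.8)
The engine behind every part is the description of the natural boundary of an iterate proved above, $B(f^{n})=\bigcup_{j=0}^{n-1}f^{-j}(B(f))$, which in particular yields the monotonicity $B(f^{n})\subseteq B(f^{n+1})$. In each case I would therefore either check that the tower of iterated preimages $f^{-j}(B(f))$ stabilises after one step, so that $\#B(f^{n})$ stays bounded, or exhibit a single level $j$ at which $f^{-j}(B(f))$ is already infinite, so that $\#B(f^{n})=\infty$ for all $n>j$ by monotonicity. The dichotomy I would exploit is elementary: for $e\in B(f)$ the fibre $f^{-1}(e)$ is infinite unless $e$ is a Picard exceptional value of $f$, and by the Picard Property (Definition~\ref{Picard} and the Remark following it) $f$ has at most two Picard exceptional values.

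For the \emph{stable} cases I(i), I(ii) and II(i) the tower collapses at the first step. If $f\in\cale$ then $f^{n}\in\cale$ takes the value $\infty$ at no point of $\C$, so $B(f^{n})=\{\infty\}$ for all $n$. If $f\in\calp_1$, say $f(z)=e^{h(z)}/z^{k}$, then $0$ is a pole of $f$ and $0$ is an omitted value, i.e. $f^{-1}(\infty)=\{0\}$ and $f^{-1}(0)=\emptyset$; hence $f^{-j}(\{0,\infty\})\subseteq\{0,\infty\}$ for every $j$, and $B(f^{n})=\{0,\infty\}$ for $n\geq2$ (the bound $n\geq2$ reflecting that $B(f)=\{\infty\}$ is itself a single point). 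If $f\in\calp_2$, say $f(z)=z^{k}e^{g(z)+h(1/z)}$, then $f$ maps $\C^{*}$ into $\C^{*}$, so $0$ and $\infty$ are both omitted, $f^{-1}(\{0,\infty\})=\emptyset$, and $B(f^{n})=\{0,\infty\}$ for all $n$.

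For the \emph{exploding} cases I would locate a singularity that is not a Picard exceptional value. In III, with $\#B(f)=m\geq3$: at most two elements of $B(f)$ are Picard exceptional, so some $e\in B(f)$ is not, whence $f^{-1}(e)$ is infinite; since $f^{-1}(e)\subseteq f^{-1}(B(f))\subseteq B(f^{2})$, already $\#B(f^{2})=\infty$, and then $\#B(f^{n})=\infty$ for all $n\geq2$. In I(iii), with $f\in\calm$: $f$ has a pole, so $\infty$ is not omitted; if $f$ has infinitely many poles then $\#B(f^{2})=\infty$, and otherwise I would combine the great Picard theorem at the essential singularity with the normal form for the functions of $\calm$ to produce a pole $p$ with $f^{-1}(p)$ infinite, so that $f^{-2}(\infty)\supseteq f^{-1}(p)$ is infinite and $\#B(f^{n})=\infty$ for $n\geq3$ --- the threshold being $n\geq3$ rather than $n\geq2$ because with a single pole $B(f^{2})$ can still be finite. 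Case II(ii) is handled the same way after normalising $B(f)=\{0,\infty\}$: since $f\notin\calp_2$, at least one of the two singularities, say $e$, is not an omitted value of $f$; choosing $q\in f^{-1}(e)$ --- necessarily $q\notin\{0,\infty\}$, hence $q$ is not one of the at most two exceptional values --- we get $f^{-1}(q)$ infinite, so $f^{-2}(e)$ is infinite and $\#B(f^{n})=\infty$ for $n\geq3$.

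Part IV is then a corollary. Using $\calk_1=\cale\cup\calp_1\cup\calm$ and $\calp=\calp_1\cup\calp_2$, after normalisation $\calk\setminus(\cale\cup\calp)$ is the union of $\calm$, of $\calk_2\setminus\calp_2$, of the classes $\calk_m$ with $m\geq3$, and of $\calk_\infty$; on the first three families I(iii), II(ii) and III give $\#B(f^{n})=\infty$ for $n\geq3$, while on $\calk_\infty$ this is immediate since $B(f^{n})\supseteq B(f)$. The step I expect to be the main obstacle is the one isolated above: showing, in the genuinely meromorphic cases I(iii) and II(ii), that the relevant pole --- respectively the attained singularity among $\{0,\infty\}$ --- is not a Picard exceptional value of $f$, so that the next preimage level is infinite. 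This is the only place where one must go beyond formal bookkeeping with $B(f^{n})=\bigcup_{j}f^{-j}(B(f))$ and invoke Picard's great theorem together with the structural description of the classes in Section~\ref{section2}; everything else is induction and monotonicity.
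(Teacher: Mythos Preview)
The paper does not prove this proposition at all; it simply declares that it ``does not need a proof since it is clear by the definitions and the basic concepts presented above.'' Your argument therefore already supplies far more than the paper does, and your overall strategy --- using $B(f^{n})=\bigcup_{j<n}f^{-j}(B(f))$ together with the Picard bound of at most two exceptional values --- is the right one. The stable cases I(i), I(ii), II(i) and the exploding case III are handled correctly.

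However, the ``main obstacle'' you flag in I(iii) is not merely a technical hurdle: as stated, I(iii) is actually false at $n=3$. Take $f(z)=(z-1)e^{z}/z$. This is transcendental meromorphic with a single pole at $0$, and since $f(1)=0$ the value $0$ is not omitted, so $f\in\calm$ (it is not conjugate to a function in $\calp_1$). Here $f^{-1}(\infty)=\{0\}$ and $f^{-1}(0)=\{1\}$, whence $B(f^{2})=\{\infty,0\}$ and $B(f^{3})=\{\infty,0,1\}$, which is finite. The point is that when $f$ has a single pole $p$, nothing prevents $p$ itself from being the second Picard exceptional value of $f$; your proposed step ``produce a pole $p$ with $f^{-1}(p)$ infinite'' cannot succeed for such $f$. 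The argument does go through one level later --- since $1$ is then a third value and hence non-exceptional, $f^{-1}(1)$ is infinite and $\#B(f^{4})=\infty$ --- so the correct threshold in I(iii), and consequently in IV, should be $n\geq4$ rather than $n\geq3$. This is a defect in the paper's statement, not in your method.

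A smaller gap: in II(ii) your inference ``$q\notin\{0,\infty\}$, hence $q$ is not one of the at most two exceptional values'' presumes that the exceptional values are exactly $0$ and $\infty$, which is not automatic. It becomes true after a case split: if either $f^{-1}(0)$ or $f^{-1}(\infty)$ is infinite then already $\#B(f^{2})=\infty$; otherwise both $0$ and $\infty$ are Picard exceptional, exhausting the two allowed slots, and then indeed every $q\in\C^{*}$ is non-exceptional. With that split your argument for II(ii) is complete and the threshold $n\geq3$ is correct there.
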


\section{The Fatou and Julia sets }
\label{section3}

In this section we will work with the dynamics of functions in class $\calk$, having in mind the properties of the Julia and Fatou sets in classes $\cale$, $\calp$ and $\calm$. We will start the section with the following concepts.

 Let $f \in \calk$. If  $n$ is the minimum such that $z$ satisfies $f^n(z) = z$, we say that $z$ is a
 {\em periodic point of period $n$}.  The set $\{z = z_1 , z_i= f(z_{i-1}),... z_{n} = z_1 \}$, $i = 2, 3,..., n$, is called {\it a periodic cycle} at $z$. If $z$ is a fixed point of period  $n$ the quantity
$$ 
\prod_{j=1}^{n} f^{'}(z_i) = \lambda(z)
$$
is called the {\it eigenvalue or multiplier} of the cycle at $z$. When any value $z_i$ is
the point at infinity the factor $f^{'}(z_i)$ is replace  by the derivative of 
$1/f(1/z)$ at the origin.

The classification of a periodic point $z_0$ of period $p$ of $f\in\calk$ is
given as follows:
 
\noindent (a) {\it super-attracting} if $|\lambda (z_0)|=0$; 

\noindent (b) {\it attracting} if $0<|\lambda(z_0)|<1$; 

\noindent (c) {\it repelling} if $|\lambda(z_0)|>1$; 

\noindent (d) {\it rationally indifferent} if $|\lambda(z_0)|=1$ and
$(f^n)'(z_0)$ is a root of unit, in this case $z_0$ is known also as a {\it parabolic periodic point}; 
 
\noindent (e) {\it irrationally indifferent} if $|\lambda(z_0)|=1$, but $\lambda(z_0)$ is not a root of unit.\\

Now we are able to give the definitions of the Fatou and Julia sets and  some
of their properties.\\

For functions in class $\calk$ we define  the {\em Fatou set}, denoted by
$F(f)$, as the maximal open set $G$  such that all $f^n$ are analytic and
 forms a normal family in $G$ (in the sense of Montel). The complement of
 the Fatou set  is called the {\em Julia set} which is  denoted by $J(f) = \chat \setminus F(f)$ and therefore $B(f)\subset J(f)$.

The following  properties  of the Fatou and Julia sets were initially proved
by Fatou and Julia, in \cite{fatou0} and  \cite{julia}  respectively, for
rational functions. Fatou \cite{fatou}  and Baker \cite{Ba}
proved the properties for transcendental entire functions $\cale$ and Battacharyya
\cite{bhattacharyya} for functions in $\calp$. While Baker, Kotus and  L\"u  \cite{baker11} for transcendental meromorphic functions $\calk_1$ and  Bolsch \cite{andreas},  for  functions in class $\calk$. We
mention that the properties of  the Julia  and  Fatou sets  are much the same
for each of those classes, but different proofs were needed and some discrepancies arise  by the time they were proved.  The properties are the following.\\

\noindent (a) The Fatou set $F(f)$ is open and the Julia set $J(f)$ is  closed. 

\noindent (b) The Julia set $J(f)$ is perfect.

\noindent (c) The set $F(f)$ is completely invariant, this means, it is forward invariant $f(F(f))\subseteq F(f)$ and backward invariant $f^{-1}(F(f))\subseteq F(f)$. $J(F)$ is also completely invariant in the sense that $f(J(f)\setminus B(f))\subseteq J(f)$ and $f^{-1}(J(f))\subseteq J(f)$.


\noindent (d) For a positive integer $p$, $F(f^p) = F(f)$ and $J(f^p) = J(f)$.

\noindent (e) The Julia set $J(f)$ is the closure of the set of repelling
periodic points of all periods of $f$.\\

For a function in class $\calk$  a Fatou component $U$ can be either:

(a) {\it periodic} if $f^n(U) \subset U$, for some $n \geq 1$; 

(b) {\it pre-periodic} if $f^m(U)$ is periodic for some integer $m \geq 0$ or

(c) {\it wandering} if $U$ is neither periodic nor pre-periodic.  \\

Consider $U_1\subseteq F(f)$ a periodic component, we say that $\{U_i\}$, $1\leq i \leq p$, is a \textit{periodic cycle of period p} if $f(U_i)\subseteq U_{i+1}$, for all $1\leq i\leq p-1$, and $f(U_p)\subseteq U_1$.

If $U$ is a periodic component of $F(f)$ of period $p$, the classification of the periodic component is given as follows. 

\begin{enumerate}
  \item $U$ contains an attracting periodic point $z_{0}$ of period $p$
    such that  $f^{np}(z)\rightarrow z_{0}$ for $z\in U$ as
    $n\rightarrow\infty$, then $U$ is called the {\it immediate attracting
      component} of $z_{0}$.

  \item $\partial U$ contains a periodic point $z_{0}$ of period $p$ and
    $f^{np}(z)\rightarrow z_{0}$ for $z\in U$ as $n\rightarrow\infty$. Then
    $(f^{p})^{'}(z_{0})=1$ if $z_{0}\in\C$.  
    In this case,
    $U$ is called either a Leau domain or a {\it parabolic component}.

  \item There exists an analytic homeomorphism $\varphi: U\rightarrow D$ where
    $D$ is the unit disc such that $\varphi(f^{p}(\varphi^{-1}(z)))=e^{2\pi i
      \alpha} z$ for some $\alpha\in \R\setminus \Q$. The component $U$ is
    called a {\it Siegel disc}.

  \item  There exists an analytic homeomorphism $\varphi: U\rightarrow A$
    where $A$ is an annulus $A=\{z: 1< \left|z\right|< r\}$, $r>1$, such that
    $\varphi(f^{p}(\varphi^{-1}(z)))=e^{2\pi i \alpha} z$ for some $\alpha\in
    \R\setminus \Q$. The component $U$ is called a {\it Herman ring}.

  \item There exists $z_{0}\in\partial U$ such that $f^{np}(z)\rightarrow
    z_{0}$, for $z\in U$ as $n\rightarrow\infty$, but $f^{p}(z_{0})$ is not
    defined. In this case, $U$ is called a {\it Baker domain} and  the 
point $z_{0}$ is called the {\it absorbing point} of $U$.
\end{enumerate}

We recall that $SV(f)$ is the set of singular values of $f \in \calk$. We define  the {\em postcritical set} of $f$  as follows:

$$SV^+(f)=\bigcup_{n\in\N} f^n(SV(f)).$$

The relation between some of the periodic Fatou components of a function $f \in \calk$ and the set of its singular values $SV(f)$ is given below in Theorem \ref{relation}, see \cite{herr} for a proof.  

\begin{theorem} \label{relation}
Let $f\in\calk$ and $\{U_i\}$ a periodic cycle of period $p$.

(a) If the elements of the cycle are attracting or a parabolic components, then for some $i\in \{1,...,p\}$, $U_i\cap SV(f)\neq \emptyset$.

(b) If $U_i$ is a Siegel disk or a Herman ring, then $\partial U_i \subset \overline{SV^+(f)}$.
\label{relation}
\end{theorem}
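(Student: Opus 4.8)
The statement is the classical Fatou–Sullivan-type relation between singular values and the rotation/attracting-parabolic Fatou components, now in the wider class $\calk$. My plan is to reduce everything to the well-known holomorphic/meromorphic arguments by working on the domain $D_p = \chat\setminus B_p$ where $f^p$ is meromorphic, using the completely invariant nature of the Fatou set and the fact that $B(f)\subset J(f)$, so that the cycle $\{U_i\}$ never meets $B_n$ for any $n$. First I would recall that since the $U_i$ are Fatou components and $B(f^n)\subset J(f^n)=J(f)$ for every $n$ (property (d) and the inclusion $B(f)\subset J(f)$ applied to iterates), each $f^n$ is meromorphic on a neighbourhood of $\overline{U_i}$ after we possibly remove the (closed, countable) set $B_n\subset J(f)$; in particular the restrictions $f^n|_{U_i}$ behave exactly as in the meromorphic theory, and the "no critical/asymptotic value escapes" arguments of Fatou apply locally.

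\textbf{Part (a): attracting and parabolic cycles.} For the attracting case, let $z_0\in U_1$ be the attracting periodic point of period $p$ and consider the immediate basin component $U_1$. The standard argument: pick the linearising coordinate near $z_0$ and pull back a small disc $\Delta_0\ni z_0$ along $f^p$. If none of the $U_i$ met $SV(f)$, then for each $n$ the map $f^{np}\colon V_n\to \Delta_0$ would be an unbranched covering onto $\Delta_0$ on the component $V_n$ of $f^{-np}(\Delta_0)$ containing $z_0$, hence a conformal isomorphism (since $\Delta_0$ is simply connected), so the inverse branches $(f^{np})^{-1}$ form a normal family with a univalent limit; but they also converge to the constant $z_0$ on $\Delta_0$, and the union $\bigcup_n V_n$ would be a Fatou component on which $f$ is injective up to the cycle, forcing $U_1$ to be a Siegel disc rather than attracting — contradiction. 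One must be careful that the pulled-back regions $V_n$ stay away from $B_p$: they do, because $V_n\subset U_1\subset F(f)$ and $B_p\subset J(f)$. The parabolic case is identical using a Leau–Fatou flower / Fatou coordinate instead of a linearising coordinate: the attracting petals pull back, and absence of singular values in the cycle again forces the inverse branches to be univalent and to converge to the constant $z_0$, which is incompatible with the parabolic local dynamics (the petal cannot be a full neighbourhood). In both cases the conclusion is $U_i\cap SV(f)\ne\emptyset$ for some $i$, or, phrasing it as the standard statement, the cycle attracts a singular value.

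\textbf{Part (b): Siegel discs and Herman rings.} Here I would follow the Fatou/Herman argument via boundary behaviour. Suppose $U_1$ is a Siegel disc or Herman ring with conjugating homeomorphism $\varphi$ onto $D$ (resp.\ the annulus $A$). Consider a point $\zeta\in\partial U_1$ and suppose, toward a contradiction, that $\zeta\notin\overline{SV^+(f)}$. Then there is a round disc $B$ about $\zeta$ disjoint from $\overline{SV^+(f)}$, and since $SV^+(f)$ is forward invariant under $f$, one can pull $B$ back univalently along all branches of all $f^{-np}$ that stay in $F(f)$; choosing a sequence of pull-backs along the orbit landing near a point of $\partial U_1$, one produces a family of univalent maps on a fixed disc whose images shrink (because the iterates of the rotation have points on $\partial U_1$ that return near $\zeta$ but the diameters in the $\varphi$-coordinate are controlled), giving — via the Koebe $1/4$ and distortion theorems — that $f^{np}$ is eventually injective on a fixed-size disc around suitable points of $\partial U_1$; this forces $\partial U_1$ into the Fatou set (the limit of these univalent inverse branches is univalent, so the boundary circle would be "thickened" into an open normal region), contradicting $\partial U_1\subset J(f)$. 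Again the key technical point that makes this work in $\calk$ rather than just in $\calm$ is that the whole orbit of $U_1$ lives in $F(f)$, hence avoids $B_n$ for every $n$, so the pulled-back discs never hit an essential singularity and $f^{np}$ really is holomorphic on them.

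\textbf{Main obstacle.} The delicate point is verifying that the classical pull-back/normal-family machinery is legitimately available near $B(f)$: one has to be sure that all the inverse branches and iterated pre-images used in the proof remain in $D_n=\chat\setminus B_n$, i.e.\ never encounter an isolated essential singularity or an accumulation point of them. This is where $B(f)\subset J(f)$ (stated in Section~\ref{section3}) and $J(f^n)=J(f)$ are essential; granting these, the proofs reduce verbatim to the transcendental-meromorphic case of Baker–Kotus–L\"u and the rational case of Fatou–Sullivan. A secondary subtlety in (b) is the Herman-ring case, where one must handle \emph{both} boundary circles and use that a Herman ring's boundary components are each in $J(f)$; the argument is symmetric in the two ends of the annulus and otherwise unchanged. (As the authors note, the full details are in \cite{herr}.)
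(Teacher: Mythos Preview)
The paper does not actually prove this theorem; it states the result and refers to Herring's thesis \cite{herr} for the proof. Your sketch follows the classical Fatou--Sullivan argument, and the one genuinely $\calk$-specific ingredient you isolate --- that $B(f^n)\subset J(f^n)=J(f)$ for every $n$, so all Fatou components and all the pulled-back discs used in the proof stay in the domain of holomorphy of every iterate --- is exactly the point that makes the transcendental-meromorphic proof go through unchanged in $\calk$. This is presumably what Herring does, so your approach is in line with the intended one.

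One passage in your part (a) is muddled and should be rewritten. You say the inverse branches $(f^{np})^{-1}$ ``form a normal family with a univalent limit; but they also converge to the constant $z_0$ on $\Delta_0$''. This conflates forward and backward iteration: it is the \emph{forward} iterates $f^{np}$ that converge to $z_0$ on the basin, whereas the inverse branch $G$ of $f^p$ fixing $z_0$ expands, and the $G^n$ certainly do not converge to a constant. The actual contradiction is the one you arrive at anyway: the increasing union $\bigcup_n G^n(\Delta_0)$ exhausts $U_1$, so $U_1$ is simply connected and $G$ extends to a conformal automorphism of $U_1$ fixing $z_0$; the Schwarz lemma (via the Riemann map) then forces $|G'(z_0)|=1$, contradicting $|G'(z_0)|=1/|\lambda|>1$. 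Your phrase ``forcing $U_1$ to be a Siegel disc'' is a legitimate shorthand for this Schwarz step, but the sentence leading up to it mixes forward and backward dynamics and should be cleaned up. The parabolic case and part (b) are fine as sketched.
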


 The relation between $SV(f)$ and a Baker domains is more complicated as the following cases show, for each one of them, there exist a transcendental meromorphic function $f$ with an invariant Baker domain $U$ satisfying: (a) $U\cap SV(f)\neq \emptyset$; (b) $U\cap SV(f)=\emptyset$; (c) $\partial U\subset \overline{SV^+(f)}$, and (d) $\partial U$ has no finite points of $\overline{SV^+(f)}$. For more details see \cite{bergweiler1}.\\
 
 The following theorem of Baker given  in \cite{baker2002}, for a more general class of functions, shows the relation between wandering domains of $f\in\calk$ and $SV(f)$.
 
 \begin{theorem}
 If $f\in \calk$ and $U$ is a wandering domain of $f$, then any limit function of a sequence in $U$ is a constant which is an accumulation point of $SV^+(f)$.
\end{theorem}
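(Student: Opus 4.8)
The plan is to imitate the classical argument of Baker–Eremenko for wandering domains of transcendental entire and meromorphic functions, adapting it to the fact that for $f \in \calk$ the "bad set" is $B(f)$ rather than just $\{\infty\}$, and that each iterate $f^n$ is only meromorphic on $D_n = \chat \setminus B_n$. Let $U = U_0$ be a wandering domain, write $U_n$ for the Fatou component containing $f^n(U)$, and suppose $f^{n_k} \to \psi$ locally uniformly on $U$ along some subsequence $n_k \to \infty$. Since $U$ is wandering, the $U_n$ are pairwise disjoint, hence the spherical areas of the $U_n$ tend to $0$, which forces any limit function $\psi$ to be constant; call its value $a \in \chat$. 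The goal is to show $a \in \overline{SV^+(f)}\setminus SV^+(f)$ is actually an accumulation point of $SV^+(f)$.

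The main step is the standard dichotomy. First I would handle the case $a \notin B^-(f) \cup SV^+(f)$, i.e.\ $a$ is not a (pre-)singularity and has no point of the postsingular orbit near it in a strong sense. Pick a small spherical disc $D = D(a,\varepsilon)$ avoiding $B^-(f)$ and disjoint from $SV^+(f)$; then on $D$ one may take inverse branches of $f$ (no critical or asymptotic values obstruct this, and no singularities lie inside), and in fact $f^n$ restricted to a slightly shrunk neighbourhood of $a$ has a well-defined family of univalent inverse branches. Using that $f^{n_k}(U) \subset D$ for large $k$ and pulling back by these branches, one transports $U_{n_k}$ back over a fixed disc around a point of $U_{n_k - m}$ for each fixed $m$; a normal-families / Koebe distortion argument (exactly as in Baker's proof) then shows that the $U_{n_k}$ cannot be shrinking to a point while mapping onto a fixed-size disc under a univalent-inverse-controlled map — contradiction. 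Hence $a$ must lie in $\overline{SV^+(f)}$. Upgrading "$a \in \overline{SV^+(f)}$" to "$a$ is an accumulation point of $SV^+(f)$" uses the same mechanism: if $a$ were an isolated point of $\overline{SV^+(f)}$, a punctured neighbourhood of $a$ would again admit single-valued inverse branches of all $f^n$, and the same contradiction applies. One must separately dispose of the possibility $a \in B^-(f)$: if $a$ is a singularity or pre-singularity, then $f^m$ is not defined at $a$ for some $m$, but $f^{n_k} \to a$ together with the openness and forward invariance of $F(f)$ (property (c)) and the fact that $f(U_{n_k})=U_{n_k+1}$ still makes sense forces, after passing to $f(U_{n_k})$, a limit that again must be an accumulation point of $SV^+(f)$ by the previous case, and one checks this is consistent only if $a$ itself is such an accumulation point.

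I expect the genuinely delicate point to be the inverse-branch / Koebe argument in the presence of the countable closed set $B^-(f)$: one must ensure that the fixed disc around $a$ over which inverse branches of $f^n$ are taken can be chosen to avoid the countably many points of $B_n$ and the singular values at every level simultaneously, uniformly in $n$. This is where the hypotheses "$B(f)$ compact and countable", "$B_n = \bigcup_{j=0}^{n-1} f^{-j}(B(f))$" and the definition of $SV(f)$ as the closure of critical and asymptotic values all get used; a Baire-category or measure-zero observation (a countable set, resp.\ a set whose closure has empty interior, cannot contain a spherical disc) lets one pick $a$'s neighbourhood clear of obstructions. The remaining steps — shrinking areas, extracting the constant limit, and the elementary manipulations with $O^+(SV(f))$ — are routine. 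I would assume Theorem~\ref{relation} and the Fatou–Julia properties (a)–(e) listed above, and cite \cite{baker2002} for the version of the Koebe-type estimate that already works in the broad generality needed here, so that the write-up reduces to verifying that the set $B^-(f)$ and the domains $D_n$ behave well enough for that estimate to apply.
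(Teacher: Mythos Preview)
The paper does not give its own proof of this theorem: it is stated and attributed directly to Baker \cite{baker2002}, who proved it for a class of functions containing $\calk$. Your sketch is precisely the strategy of \cite{baker2002} --- constancy of limit functions via the shrinking-area argument for pairwise disjoint $U_n$, followed by the inverse-branch/Koebe contradiction when $a$ is not an accumulation point of $SV^+(f)$ --- so there is nothing to compare: your proposal and the source the paper cites coincide, and you yourself note you would invoke \cite{baker2002} for the key estimate.
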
  
 
 


 Concerning Herman rings we shall mention some results  for the different classes of functions given in Section \ref{section2}.

(a) For functions in classes $\cale$ and $\calp_1$ there are no Herman rings. It is possible  to have doubly connected domains in $F(f)$ for $f \in \cale$ which are wandering domains, see the discussion in Subsection \ref{wandering}.

(b) For functions in   class $\calp_2$  it is known that the Fatou set can have at most one doubly  connected component in the Fatou set. An example given in \cite{baker2a} of a  function with a  Herman ring  is $\lambda z e^{\alpha( z - 1/z)}$ with $0 < \alpha <1/2$, $\lambda= e^{2\pi i \beta}$ and $\beta$ chosen in such a way that satisfies  a diophantine condition.

(c) For functions in class $\calm$, it was shown in \cite{fagellapat}, 
that it is
possible to produce Herman rings.  Even more,  they constructed  Herman rings  for some
$f \in \calk$. \\

In the following two subsections  we shall  give a discussion related  with some examples of Baker  domains and wandering domains for the  different classes of functions  given in Section \ref{section2}.

\subsection{Baker domains}
\label{baker}

Let $\{U_i\}$ be a periodic cycle of period $p$, we say that it is a \textit{cycle of Baker domains} if for all $1\leq i\leq p$, $U_i$ is a Baker domain. \\


If $f \in \calk$, $U$ is   a Baker domain  and $z_0$ is an absorbing point of $U$, then  we have the following statements. 

(i) $z_0$ is in $B^-(f)$ and for any cycle of Baker domains at least one absorbing point is in $B(f)$.

(ii) If $z_0 \in B(f)$ and $w_0$ is the absorbing point  of a Baker domain $V$ such that $f(U) \subset V$, then $w_0$ is an asymptotic value of $f$ at $z_0$.\\

\noindent {\bf I. Baker domains for functions in class $\cale$.}\\

 Having in mind the classification of a  periodic component $U$  in the Fatou
set, it  is natural to study $U$ and its boundary in connection
with the Riemann map $\Psi: D(0, 1) \to U$, were $D(0,1)$ is the disk with center at 0 and radius 1.  We  denote by $\Psi(e^{i\theta})
= \lim_{r \to 1} \Psi(re^{i\theta})$, $e^{i\theta} \in \partial D$,  the {\it radial limit} and define 
$$
\Theta =  \{ e^{i\theta}: \Psi(e^{i\theta}) = \infty \}.
$$

For $e^{i\theta} \in \partial D$ and $g$ analytic in $D$ the {\it cluster set}
  $C(g, e^{i\theta})$ is the set of all $w \in \chat$  for which there exist
  sequences $z_n$ in $D$ such hat $z_n \to e^{i\theta}$ and $g(z_n) \to w$ as
  $n \to \infty$. Now define  the set 
$$
\Xi =  \{ e^{i\theta}: \infty \in C(g, e^{i\theta}) \}.
$$

Observe that $\overline{\Theta} \subset \Xi$. In \cite{bakerwe} Baker and Weinreich  proved the following theorem.

\begin{theorem}
If $f \in \cale$ and $U \subset F(f)$ is an unbounded invariant component
which   is not a Baker domain, then  $\infty \in C(\Psi,e^{i\theta})$, for
every $e^{i \theta} \in \partial D$, this is $\Xi = \partial D$.
\label{wei} 
\end{theorem}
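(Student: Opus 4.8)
\emph{Proof plan.}

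\textbf{Setup and reduction.} The plan is to work with the Riemann map of $U$, pass to the holomorphic self-map of the disc that $f$ induces on $U$, and run its boundary dynamics against the assumption that $\infty$ fails to be clustered in some direction. Since $f\in\cale$ is transcendental entire, every invariant component of $F(f)$ is simply connected (Baker), so $\Psi:D\to U$, with $D=D(0,1)$, is a conformal bijection onto $U$; being univalent, $\Psi$ has finite radial limits $\Psi^{\ast}(e^{i\theta})\in\partial U$ for a.e.\ $e^{i\theta}\in\partial D$, so $\Theta$ is a Lebesgue-null set. As $U$ is unbounded, $\infty\in\partial U$ in $\chat$, and since the total boundary cluster set $\bigcup_{e^{i\theta}\in\partial D}C(\Psi,e^{i\theta})$ equals $\partial U$, we get $\Xi\neq\emptyset$. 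Moreover $\Xi$ is closed: if $e^{i\theta_{k}}\to e^{i\theta}$ with $e^{i\theta_{k}}\in\Xi$, pick $z_{k}\in D$ with $|z_{k}-e^{i\theta_{k}}|<1/k$ and $|\Psi(z_{k})|>k$, so $z_{k}\to e^{i\theta}$ and $\Psi(z_{k})\to\infty$, whence $e^{i\theta}\in\Xi$. It therefore remains to exclude $\Xi\subsetneq\partial D$.

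\textbf{Induced dynamics.} Put $g:=\Psi^{-1}\circ f\circ\Psi:D\to D$, so that $\Psi\circ g=f\circ\Psi$ on $D$. Because $\partial U\subseteq J(f)$ and $f(\partial U\cap\C)\subseteq\partial U$ (since $f(\overline U)\subseteq\overline U$ and $J(f)$ is forward invariant), comparing radial limits on the two sides of $\Psi\circ g=f\circ\Psi$ shows that $g$ is an \emph{inner} function: were its radial limit $b(e^{i\theta})$ of modulus $<1$ on a set of positive measure, then on that set $\Psi(b(e^{i\theta}))=f(\Psi^{\ast}(e^{i\theta}))$ would lie simultaneously in $U$ and in $\partial U$. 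The same identity, refined by Lindel\"of's theorem and the Julia--Carath\'eodory estimates for $g$, yields the boundary semiconjugacy $\Psi^{\ast}(b(e^{i\theta}))=f(\Psi^{\ast}(e^{i\theta}))$ for a.e.\ $e^{i\theta}$. Finally I would translate the hypothesis: since $f\in\cale$ has no Herman rings, an invariant component is attracting, parabolic, Siegel, or Baker, and $U$ being \emph{not} a Baker domain is equivalent to no subsequence of $(f^{n}|_{U})$ tending to $\infty$, i.e.\ to $(f^{n}|_{U})$ being locally uniformly bounded on $U$; in disc coordinates this means $g$ has an interior fixed point (attracting or Siegel) or a Denjoy--Wolff point on $\partial D$ (parabolic).

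\textbf{The contradiction.} Assume $\Xi\subsetneq\partial D$. Then $\partial D\setminus\Xi$ contains a closed arc $I'$, and from $\infty\notin C(\Psi,e^{i\theta})$ for $e^{i\theta}\in I'$ together with compactness of $I'$ one extracts a constant $M$ and a Jordan subdomain $G\subset D$ with $I'\subset\partial G$ and $\Psi(G)\subseteq\{|w|\le M\}$; thus $V:=\Psi(G)$ is a sub-region of $U$ with $\overline V$ a compact subset of $\C$, attached to a nondegenerate arc of prime ends. The heart of the argument is to transport $G$ by the inner map $g$ and show that the arc $I'$, on which $\Psi$ is finite, must sweep out all of $\partial D$: using the semiconjugacy of Step~2, the complete invariance of $F(f)$, the location of the fixed point of $g$ from Step~2, and the fact that $\infty$ is an essential singularity of $f$ --- so that by Picard's theorem $f$ maps every neighbourhood of $\infty$ onto $\C$ minus at most one point --- one shows that the existence of even one prime end in $\Xi$ is incompatible with $U$ being non-Baker unless $\Xi$ is dense, hence (being closed) unless $\Xi=\partial D$; equivalently, from $G$ and its $g$-iterates one manufactures a point $z_{0}\in U$ with $f^{n}(z_{0})\to\infty$, which by normality would make $U$ a Baker domain, contrary to hypothesis. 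Therefore $\Xi=\partial D$.

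\textbf{Main obstacle.} The delicate point is exactly this last transference: turning the \emph{local} boundedness of $\Psi$ near $I'$ into the \emph{global} statement $\Xi=\partial D$, while carrying out the prime-end and cluster-set bookkeeping for the inner function $g$, which in general has no continuous extension to $\partial D$. It is here that the hypothesis ``$U$ is not a Baker domain'' is essential: in a Baker domain the Denjoy--Wolff point of $g$ is precisely the prime end along which $\Psi\to\infty$, so the argument must genuinely locate that escape direction and show that its presence is incompatible with $\Xi\neq\partial D$. The parabolic case (Denjoy--Wolff point on $\partial D$ but $U$ not Baker) is where the most care is needed, since there the orbit approaches the boundary and one must control the cluster set of $\Psi$ along horocyclic approaches to the fixed prime end.
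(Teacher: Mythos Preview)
The paper does not prove this theorem; it is quoted from Baker and Weinreich as background for the Baker-domain discussion, so there is no in-paper argument to compare against. Your Steps~1 and~2 are correct and set up the standard machinery: $U$ is simply connected, $\Psi$ exists, $g=\Psi^{-1}\circ f\circ\Psi$ is an inner function, and $\Xi$ is closed and nonempty. The difficulty is entirely in Step~3, which you yourself label the ``main obstacle'' without resolving it.

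Two concrete problems with Step~3. First, your route~(b) --- ``manufacture a point $z_{0}\in U$ with $f^{n}(z_{0})\to\infty$'' --- is a dead end: in an attracting or parabolic basin every orbit in $U$ converges to a finite fixed point, and in a Siegel disc every orbit is precompact in $U$; no point of a non-Baker component escapes to $\infty$, so there is nothing to manufacture and no contradiction can arise along this line. Second, your route~(a) --- showing $\Xi$ is dense --- is the right target, but the sentence ``using the semiconjugacy, complete invariance, Picard's theorem \dots\ one shows \dots'' is not an argument. What is actually needed is an invariance statement: if $\Psi$ is bounded near $e^{i\theta}$ and $g$ extends holomorphically across $e^{i\theta}$, then $g$ is open there and $\Psi\circ g=f\circ\Psi$ is bounded, so $\Psi$ is bounded near $g(e^{i\theta})$; hence $\partial D\setminus\Xi$ is forward-invariant under the boundary map of $g$ away from its singular set. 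In the Siegel case $g$ is an irrational rotation and the conclusion follows at once (such a rotation admits no proper nonempty open invariant subset of $\partial D$). In the attracting and parabolic cases $g$ is a non-M\"obius inner function and one must invoke a genuine fact about its boundary dynamics --- density of backward orbits, or an ergodicity-type statement --- together with control of the singular set of $g$ on $\partial D$ (and show that these singularities lie in $\Xi$). None of this is in your sketch; it stops exactly where the real work begins.
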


Theorem \ref{wei}  no longer holds  for Baker domains. An
example was given in \cite{bakerwe} where $f(z) = z + \gamma + e^{2\pi i}$, for
suitable  choices of $\gamma$, so that  $f^n \to \infty$ in $U$ and
$\partial U$ is a Jordan curve in $\chat$, so each $C(f, e^{i\theta})$ is a
different singleton, i.e., $\Theta$ and $\Xi$  consist of only one point.
In \cite{ber} Bergweiler showed that $h(z) = 2 - log2 + 2z - e^z$ has the same property.\\

Baker and Wienreich also proved that if $ U$ is  a Baker domain  and $\partial
U$ is a Jordan curve in $\chat$, then $f$ is univalent in $U$.

The function $f(z) = e^{-z} + z + 1$  is one of the functions discussed in
the  Fatou's fundamental paper \cite{fatou},  for which $f^n \to \infty$ in
$U$, where $U$ contains the right half plane, in other words, $f$ has a Baker domain. Now if  we take the function $G(z) = f(z) + \epsilon + k(z)$, where
$f(z)= e^{-z} + z + 1$ (Fatou's function),    $\epsilon \geq 0$ is a constant and
$k$ is an entire function such that $|k(z)| \leq Min(\epsilon, 1/|z|^2)$ on the
strip $S = \{z = x + iy: |y| < \pi, x <0 \}$. Then $G(z)$ has a Baker domain
$U$ and $\overline{\Theta} = \partial D$, see  \cite{pato} for a proof. When $\epsilon = 0$ we get $G(z) = f(z)$, so we have the Fatou's
function. 

The following theorem was proved in \cite{pato} and \cite{kisaka}.

\begin{theorem}
If $f \in \cale$ and $U$ is an unbounded invariant component, which is a Baker
domain, such that $f|U$ is not univalent, then $\overline{\Theta}$
contains a non-empty perfect set in $\partial D$.
\label{pati}
\end{theorem}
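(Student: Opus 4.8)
The plan is to transfer the problem to the unit disc by means of the Riemann map $\Psi$ and to exploit the \emph{associated inner function} of the Baker domain. Since $U$ is invariant and unbounded, Baker's theorem on multiply connected Fatou components \cite{bakerw} forces $U$ to be simply connected, so $\Psi\colon D\to U$ is a genuine Riemann map. Because $f$ is transcendental entire, $f^{n}\to\infty$ cannot hold uniformly on a full neighbourhood of $\infty$, so $\infty\notin U$; as $U$ is unbounded this gives $\infty\in\partial U$. Put $g=\Psi^{-1}\circ f\circ\Psi\colon D\to D$. It is a standard fact that for a Baker domain of an entire function $g$ is an inner function, and since $\Psi$ is a conformal isomorphism $D\to U$ the map $g$ is univalent if and only if $f|_{U}$ is. Thus the hypothesis says precisely that $g$ is a \emph{non-univalent} inner function, in particular not a M\"{o}bius automorphism of $D$. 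Finally, since $f^{n}\to\infty$ locally uniformly on $U$, the iterates $g^{n}$ converge locally uniformly on $D$ to the Denjoy--Wolff point $p\in\partial D$, a boundary fixed point of $g$ (radial limit $g(p)=p$) with angular derivative $c=g'(p)\in(0,1]$.

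Two facts drive the argument. \textbf{(A)} The point $p$ lies in $\overline{\Theta}$: since $\infty$ is accessible from the invariant Baker domain $U$, an access path is carried by $\Psi^{-1}$ to a path in $D$ landing at the prime end at which $g^{n}(0)$ accumulates, namely $p$, so that $\Psi$ has radial limit $\infty$ at $p$; alternatively one argues directly from $\Psi(g^{n}(0))=f^{n}(\Psi(0))\to\infty$, $g^{n}(0)\to p$, and Lindel\"{o}f's theorem, $\Psi$ being univalent hence normal. \textbf{(B)} $g^{-1}(\Theta)\subseteq\Theta$, where preimages are taken through boundary radial limits. Here the entire character of $f$ enters: $f$ is continuous on $\C$, hence maps bounded sets to bounded sets, so $f(z_{n})\to\infty$ forces $z_{n}\to\infty$. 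Consequently, if $e^{i\theta}\in\Theta$ and $g$ has (nontangential) radial limit $e^{i\theta}$ at $e^{i\varphi}$, then $\Psi\circ g=f\circ\Psi$ tends to $\infty$ along the radius to $e^{i\varphi}$, so $\Psi(re^{i\varphi})$ is unbounded and, by Lindel\"{o}f's theorem applied to the normal map $\Psi$, has radial limit $\infty$ at $e^{i\varphi}$; that is, $e^{i\varphi}\in\Theta$. Iterating (B) starting from (A) yields $\bigcup_{n\ge0}g^{-n}(\{p\})\subseteq\Theta$.

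It remains to find a perfect set inside $\overline{\Theta}$, for which it suffices to show that the closure of $\bigcup_{n\ge0}g^{-n}(\{p\})$ is perfect. The analytic input is that the boundary map $g^{*}\colon\partial D\to\partial D$ of a non-M\"{o}bius inner function is a non-invertible self-map of the circle that is ``chaotic'' near $p$. If $g$ is finite-to-one it is a finite Blaschke product of degree $d\ge2$, and $g^{*}$ is then a $d$-fold expanding covering of $\partial D$, topologically conjugate to $w\mapsto w^{d}$; hence the backward orbit of $p$ is dense in $\partial D$ and $\overline{\Theta}=\partial D$ is perfect. If $g$ is infinite-to-one one uses instead that $g^{*}$ is an exact, hence ergodic and non-invertible, endomorphism of the circle, for which the closure of the set of iterated preimages of a boundary point that is itself a radial limit of $g$ is a perfect set. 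In either case $\bigcup_{n\ge0}g^{-n}(\{p\})$ is infinite and its closure has no isolated points, so $\overline{\Theta}$ contains a non-empty perfect subset of $\partial D$; this set moreover lies in $\Xi$, consistently with $\overline{\Theta}\subset\Xi$.

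The passage to $g$, the identity $\Psi\circ g=f\circ\Psi$, and the use of Lindel\"{o}f's theorem are routine; the real work, and where the arguments of \cite{pato, kisaka} are needed, lies in the two places above. The delicate point behind fact (A) is the boundary behaviour of $\Psi$ at the Denjoy--Wolff prime end: one must know that $\infty$ is \emph{radially} accessible from $U$ and control whether $g^{n}(0)$ approaches $p$ nontangentially, which is where the Cowen--K\"{o}nig classification of Baker domains into hyperbolic and (simply or doubly) parabolic types enters, and where the essential singularity of $f$ at $\infty$ is used structurally. The main obstacle, however, is the last step: when $g$ is infinite-to-one the backward orbit of $p$ under $g^{*}$ could a priori be a countable set with a single accumulation point, and ruling this out --- equivalently, showing that $g^{*}$ is genuinely expansive on the relevant part of $\partial D$ rather than a parabolic contraction towards $p$ --- is precisely where the non-univalence of $g$ must be used in an essential way, and it leans on the finer ergodic theory of inner functions on the circle.
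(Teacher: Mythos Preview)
The paper does not prove this theorem at all: it is stated as a known result and attributed to \cite{pato} and \cite{kisaka}, with no argument given beyond the citation. There is therefore nothing in the paper to compare your proposal against.

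That said, your outline is precisely the inner-function strategy of the cited sources: pass to $g=\Psi^{-1}\circ f\circ\Psi$, locate the Denjoy--Wolff point $p$ inside $\Theta$ via normality of $\Psi$ and the dynamics $f^{n}(\Psi(0))\to\infty$, pull $\Theta$ back under the boundary map of $g$, and then extract a perfect set from the backward orbit of $p$. Your identification of the two delicate points is accurate. One refinement worth noting: in \cite{pato} the perfect set is obtained not quite by chasing $g^{-n}(\{p\})$ directly, but via the set $\mathrm{sing}(g^{-1})\subset\partial D$ of singularities of the inverse of the inner function, which one shows lies in $\overline{\Theta}$; when $f|_{U}$ is not univalent, $g$ has infinite valence (a Baker domain of an entire function cannot support a finite Blaschke product of degree $\ge 2$ as its inner function, since that would force a fixed point in $U$), and for such $g$ the singular set of $g^{-1}$ is already closed and perfect. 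This sidesteps the ergodic-theoretic worry you flag in your last paragraph about the backward orbit possibly collapsing to a single accumulation point.
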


With the hypothesis of Theorem \ref{pati}, is it necessarily the case that
$\overline{\Theta} = \partial D$?  Bargman in \cite{bargman} proved that this
is the case  for a doubly parabolic Baker domain that we define below.\\

Classifications  of a Baker domain have been given  by Cowen in \cite{cowen},  by N. Fagella and C. Henriksen  in \cite{fagelac}. Here we mention the one given in \cite{fagelac}.\\

Let $f \in \cale$ and $U$ be an invariant Baker domain of $f$. Then  $f|U$ is a
Riemann surface equivalent to exactly on of the following cylinders:

(a) $\{ z \in \C: -s < Im z < s \}/ \Z$, for some $s >0$;

(b) $\{ z \in \C: Im z > 0 \}/ \Z$;

(c) $\C/\Z$.

In case (a) $U$ is called {\em hyperbolic}, in case (b) {\em simply parabolic}
and in case (c) {\em doubly parabolic}.\\

Examples of the above classification of invariant Baker domains when: (i) the function is univalent can be found, for instance in \cite{bakerwe},
\cite{baranskif}, \cite{ber}, \cite{eremenkolyubich}  \cite{fagelac} and \cite{herman}. (ii) For hyperbolic Baker domains in which the function is not univalent we refer to \cite{rippon30} and \cite{rippon}.  

Bergweiler and Zheng  point it up  that  the classification  given above for an
invariant Baker domain  is the same as the given by Cowen, see the discussion
in Section 2 in \cite{berzhen}. Also they  showed  the following two theorems.

\begin{theorem}
There exist $f \in \cale$ with a simply parabolic Baker domain in which
 $f$ is not univalent.
\label{theow}
\end{theorem}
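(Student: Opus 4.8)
This is an existence statement that does not follow from anything recorded above: Theorem~\ref{pati} and the Bargman result describe \emph{properties} forced on a non-univalent Baker domain, not its existence. The plan is to construct $f$ by approximation theory (Arakelyan's theorem): fix a \emph{model map} $\phi$ on a simply connected domain $\Omega$ running out to $\infty$ that already exhibits all the desired features, and then show that a sufficiently accurate entire approximant $f$ of $\phi$ inherits them. (A quasiconformal-surgery construction --- folding a two-to-one piece into a pre-existing univalent simply parabolic Baker domain and straightening via the measurable Riemann mapping theorem --- is an alternative route to the same conclusion.)

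\emph{The model.} Take $\Omega$ of half-plane type, escaping to $\infty$, and let $\phi\colon\Omega\to\Omega$ be holomorphic such that: (i) off a sparse sequence of small ``folds'' centred at points $p_k\to\infty$ in $\Omega$, $\phi$ coincides with the translation $z\mapsto z+1$, while each fold contributes a genuine critical point of $\phi$ lying inside $\Omega$, so that $\phi|_\Omega$ is not injective; (ii) the linearizer $\psi$ of $\phi$ (the solution of $\psi\circ\phi=\psi+1$ on an absorbing subdomain) maps $\Omega$ onto a half-plane on which $\phi$ acts as a translation \emph{parallel to the boundary} --- this is precisely the simply parabolic normal form of Section~\ref{baker}, as opposed to the ``orbit escaping down the middle'' picture that yields a doubly parabolic domain, and it must be designed into the shape of $\Omega$ near $\infty$ on purpose. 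The folds must be sparse and small enough that away from them $\phi$ is a summably small perturbation of the translation and that their images still lie deep inside $\Omega$ (so escape is not destroyed).

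\emph{Passing to an entire function.} Choose a closed set $V\subset\C$ with connected complement, locally connected at $\infty$, containing $\overline\Omega$ outside tiny neighbourhoods of the branch points of $\phi$; Arakelyan's theorem then gives $f\in\cale$ with $|f-\phi|<\varepsilon(z)$ on $V$ for a prescribed positive $\varepsilon$ decaying at $\infty$. Three things then need to be checked. (a) \emph{A Baker domain exists:} a Schwarz--Pick contraction argument on a slightly shrunk domain $\Omega'$ shows $f(\Omega')\subset\Omega'$ and $f^n\to\infty$ on $\Omega'$, so $\Omega'$ lies in an invariant Fatou component $U$ with $f^n|_U\to\infty$, i.e.\ a Baker domain with absorbing point $\infty$. (b) \emph{$f$ is not univalent on $U$:} near a fold, Rouch\'e applied to $f'$ against $\phi'$ produces a zero of $f'$ in $\Omega'\subset U$, so $U$ contains a critical point of $f$. (c) \emph{$U$ is simply parabolic:} one identifies an absorbing subdomain of $U$ together with its linearizer and, using the uniform closeness of $f$ to $\phi$ along the escaping region and the Cowen--K\"onig criteria (the asymptotics of $\rho_U(f^n(z),f^{n+1}(z))$ and the shape of the linearizer's image), shows the normal form is the one carried over from $\phi$.

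The crux is (c) in the presence of (b). The folds are exactly what forces a critical point into $U$, but they are also perturbations that can change the conformal type of the Baker domain: folds that are too large or too densely placed can push the escaping orbit ``off the horocycle'' and make $U$ doubly parabolic (the case already handled by Bargman), or leave $\rho_U(f^n(z),f^{n+1}(z))$ bounded away from $0$ and produce a hyperbolic Baker domain, or destroy the escape of orbits altogether. Balancing ``enough folding to force a critical point into $U$'' against ``sparse and small enough to keep the linearized orbit on a boundary-parallel curve'' is the heart of the argument, and checking that the simply parabolic normal form genuinely survives the perturbation is where the real work lies; the remaining steps --- the approximation itself and the Schwarz--Pick and Rouch\'e estimates --- are routine.
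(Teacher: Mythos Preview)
The paper does not actually prove this theorem: it is quoted as a result of Bergweiler and Zheng \cite{berzhen}, and no argument is given beyond the citation. So there is no ``paper's own proof'' to compare against here.

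That said, your outline is broadly in the spirit of what Bergweiler and Zheng do: their construction is indeed via approximation theory (Arakelyan/Runge-type approximation of a carefully designed model map), and the delicate point is exactly the one you flag in (c) --- controlling the Cowen type under the perturbation that introduces non-univalence. Your proposal is honest about where the difficulty lies, but as written it is a programme rather than a proof: you have not specified the model $\phi$ concretely enough to verify that the linearizer really has simply-parabolic image, nor given the quantitative placement of the folds that makes the hyperbolic-distance asymptotics $\rho_U(f^n(z),f^{n+1}(z))\to 0$ (ruling out hyperbolic type) while keeping the orbit on a boundary-tangent trajectory (ruling out doubly parabolic). In the actual construction these are handled by an explicit choice of domain geometry and fold parameters, and the type is read off from a concrete computation rather than an abstract stability argument. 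If you want a self-contained proof, you would need to fill in precisely those estimates; otherwise, citing \cite{berzhen} as the paper does is the appropriate move.
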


The Baker domain in Theorem \ref{theow} can be chosen such that $\Xi \neq \partial D$. In particular this implies that $\overline{\Theta}\neq \partial
D$. The following Theorem   gives the existence of an example of a hyperbolic
Baker domain  with this property.

\begin{theorem}
There exists $f_1 \in \cale$ with a simply parabolic Baker domain $U_1$ such
that $f_1|U_1$ is not univalent and the set $\Xi$ defined above satisfies $\Xi
\neq \partial D$.
There also exists $f_2 \in \cale$ with hyperbolic Baker domain $U_2$, satisfying
$\Xi \neq \partial D$ such that  $f_2|U_2$ is not univalent.
\end{theorem}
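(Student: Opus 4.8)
The plan is to construct two separate examples, one simply parabolic and one hyperbolic, each with an invariant Baker domain on which the map fails to be univalent and whose associated set $\Xi$ is a proper subset of $\partial D$. The natural starting point is Theorem \ref{theow}: we already know there is a function $g\in\cale$ with a simply parabolic Baker domain $U$ on which $g$ is not univalent. The first step is to revisit the construction behind Theorem \ref{theow} and arrange the extra normalization that $\infty\notin C(\Psi,e^{i\theta})$ for at least one boundary point, i.e. $\Xi\neq\partial D$. Concretely, I would take a model of the form $f_1(z)=z+1+h(z)$, where $h$ is a carefully chosen entire perturbation that is small and behaves like $e^{-z}$-type decay on a right half-plane $S=\{\operatorname{Re}z>c\}$ so that $f_1^n\to\infty$ in a domain containing $S$ (this is the Fatou-function mechanism used around \cite{pato}), while on a left half-plane $h$ introduces the non-univalence (e.g. $h$ incorporates a bounded periodic term forcing $f_1$ to take some values twice in $U$). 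The point of the half-plane asymmetry is that the Baker domain $U_1$ then has boundary accessible points with bounded cluster sets, so $\Xi\neq\partial D$; quantitatively one estimates $|f_1^n(z)-z-n|$ on $S$ to pin down the escape rate, and uses a Lindel\"of-type / prime-end argument to show that near a boundary arc the Riemann map $\Psi$ has finite radial limits with $\infty$ outside the cluster set.

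For the hyperbolic example $f_2$, the strategy is similar in spirit but the model cylinder is $\{-s<\operatorname{Im}z<s\}/\Z$ rather than a half-plane quotient, so I would look for $f_2$ conjugate on a horizontal strip to a translation $w\mapsto w+1$ with a genuine two-sided strip as absorbing region. A workable template is $f_2(z)=z+a+b\sin(2\pi z)+\psi(z)$ with $a$ real, $|b|$ small, chosen so that iteration pushes points to the right inside a strip $|\operatorname{Im}z|<s$, giving a hyperbolic Baker domain, while $\psi$ (small, entire, decaying on the strip) breaks univalence. Again the key quantitative input is a uniform estimate $\operatorname{Re}f_2^n(z)\to+\infty$ with controlled drift and bounded vertical excursion on the strip, from which one deduces that the strip lies in a single Fatou component $U_2$ that is a hyperbolic Baker domain, and that its complement-side boundary contributes boundary points where $\infty$ is not in the cluster set, hence $\Xi\neq\partial D$. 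I would verify non-univalence directly by exhibiting two points of the strip with the same image, which survives because the perturbation is small and the sinusoidal term is genuinely $1$-periodic.

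The main obstacle, and where most of the real work sits, is controlling the \emph{boundary behaviour} of the Riemann map $\Psi$ precisely enough to conclude $\Xi\neq\partial D$ rather than merely $\overline{\Theta}\neq\partial D$: one must rule out that $\infty$ sneaks back into the cluster set along some sequence approaching an otherwise "tame" boundary arc. This requires a careful prime-end / harmonic-measure argument showing that a whole sub-arc of $\partial D$ corresponds to the "finite side" of $\partial U$, together with a hyperbolic-metric contraction estimate ($U\subsetneq\C$ so the hyperbolic metric is available) guaranteeing that orbits entering through that side converge to a finite absorbing point rather than escaping. A secondary technical point is simultaneously enforcing three constraints on the perturbation — smallness (so the Baker domain and its escape dynamics persist), non-univalence, and the asymmetric decay needed for the cluster-set control — which I would handle by writing the perturbation as a product of an $e^{\pm z}$-type factor times a bounded periodic factor and optimizing constants, much as in \cite{pato} and \cite{berzhen}. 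Once the escape estimates and the boundary/prime-end analysis are in place, the statements for $f_1$ (using that a simply parabolic Baker domain has $\overline{\Theta}\neq\partial D$ as noted after Theorem \ref{theow}) and $f_2$ follow by assembling these pieces.
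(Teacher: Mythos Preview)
The paper does not contain its own proof of this theorem: it is stated as a result of Bergweiler and Zheng \cite{berzhen}, cited without argument. So there is no in-paper proof against which to compare your proposal; any assessment has to be of the internal coherence of your sketch and its relation to the actual construction in \cite{berzhen}.

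On that score, your plan has a structural problem in the simply parabolic case. Your template $f_1(z)=z+1+h(z)$ with $h$ decaying like $e^{-z}$ on a right half-plane is exactly the Fatou-function mechanism, and Baker domains produced this way are typically \emph{doubly} parabolic (the quotient $U_1/f_1$ is $\C/\Z$), not simply parabolic. To land in the simply parabolic class the absorbing dynamics must look like a translation of a half-plane \emph{parallel to its boundary}, so the model absorbing region should be an upper (or lower) half-plane with horizontal drift, not a right half-plane. Your sketch does not explain how the chosen perturbation forces the quotient $U_1/f_1$ to be a punctured disc rather than $\C^*$; without that, you have no control over which Cowen--Fagella--Henriksen type you end up in. The hyperbolic template is better aligned (a horizontal strip modulo horizontal translation is an annulus), but the formula $z+a+b\sin(2\pi z)+\psi(z)$ again behaves like a near-translation, and you give no mechanism ensuring the hyperbolic-type contraction rather than one of the parabolic types.

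The constructions in \cite{berzhen} proceed differently: rather than writing down explicit elementary perturbations and trying to read off the Cowen type, they build the examples so that the type and the boundary behaviour are controlled from the outset (via quasiconformal/approximation techniques that let one prescribe the model on an absorbing domain and then globalize). Your acknowledged ``main obstacle'' --- showing $\Xi\neq\partial D$ via a prime-end/harmonic-measure argument while simultaneously keeping non-univalence and the correct Cowen type --- is precisely the part that these techniques are designed to handle, and it is not clear your explicit-perturbation route can carry all three constraints at once.
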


An interesting  family studied by Rippon and Stallard in
\cite{rippon0} and \cite{rippon} is $f(z)=az+bz^ke^{-z}(1+o(1))$ where $k\in
\N$, $a\geq 1$ and $b>0$. They proved that $f$ has an invariant Baker domain
$U$ where $f$ is not univalent. Also,  they proved the following result.

\begin{theorem}
For each $n\in \N$, there exist an entire function $f$ which has a periodic
cycle of period $n$ of Baker domains on which $f$ is univalent.
\end{theorem}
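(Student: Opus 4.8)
The plan is to construct, for each $n$, an explicit entire function whose required period-$n$ cycle comes from a rotational symmetry of order $n$. Fix $n\in\N$, set $\omega=e^{2\pi i/n}$, pick $\rho>1$, and consider
$$f(z)=\rho\,\omega z+c+e^{-z^{n}},$$
with $c\in\C$ a constant whose precise value is inessential (for $n=1$ this is already a Bergweiler-type example with an invariant univalent Baker domain). The transcendental term $e^{-z^{n}}$ is extremely small exactly on the $n$ open sectors $G_k=\{z:|\arg z-2\pi k/n|<\pi/(2n)\}$ (there $\Re(z^{n})$ grows like a positive power of $|z|$) and extremely large on the complementary ``anti-sectors'' $B_k$, where $\Re(z^{n})\to-\infty$. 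Inside $G_k$ one has $f(z)=\rho\,\omega z+O(1)$, so $f$ raises the argument by $2\pi/n$ and multiplies the modulus by essentially $\rho$; fixing a slightly thinner truncated sector $S_k\subset G_k\cap\{|z|>R\}$ around the direction $2\pi k/n$ with $R$ large, one checks $f(S_k)\subset S_{k+1}$ (indices mod $n$) and, iterating, $|f^{m}(z)|\ge(\rho')^{m}|z|\to\infty$ locally uniformly on $\bigcup_k S_k$ for some $\rho'\in(1,\rho)$.

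Hence $\bigcup_k S_k\subset F(f)$; let $U_k$ be the Fatou component containing $S_k$. Since $f(U_k)$ is connected, contained in $F(f)$ and meets $S_{k+1}$, we get $f(U_k)\subset U_{k+1}$, so $\{U_k\}$ is a cycle of some period $d\mid n$. On each $U_k$ the iterates $f^{nm}$ tend to $\infty$ locally uniformly (true on $S_k$, hence on $U_k$ by normality), and $\infty$ is an essential singularity of $f$; by the definition recalled in Section \ref{section3} each $U_k$ is therefore a Baker domain. Univalence of $f$ on $S_k$ follows from $f'(z)=\rho\,\omega-nz^{n-1}e^{-z^{n}}=\rho\,\omega+o(1)$ on $S_k$: $f$ is an arbitrarily small perturbation of the similarity $z\mapsto\rho\,\omega z$ on a truncated sector (a quasiconvex domain), so a Noshiro--Warschawski--type estimate gives injectivity there.

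The remaining, and central, point is that $d=n$, i.e.\ that $U_0,\dots,U_{n-1}$ are pairwise distinct; the same analysis also supplies the confinement needed to extend univalence to the whole component. Three ingredients are needed: (i) $B_k\cap\{|z|>R\}\subset J(f)$, because there $f$ is a scaled exponential of a polynomial and $\{f^{m}\}$ is not normal on such a region --- this is the standard picture for the Julia set of functions with direct or logarithmic tracts over $\infty$ (Eremenko--Lyubich, Bergweiler); (ii) the closed sectors $\overline{G_k}$ are pairwise disjoint, each of angular width $\pi/n$ with consecutive centres $2\pi/n$ apart, so $F(f)\cap\{|z|>R\}\subset\bigsqcup_k(\overline{G_k}\cap\{|z|>R\})$, a disjoint union; (iii) every point of $U_k$ has forward orbit entering $S_k\cap\{|z|>R\}$, and tracing back along the expanding channel dynamics one concludes that $U_k$ itself lies inside the single piece $\overline{G_k}\cap\{|z|\ge R_0\}$. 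Once (iii) holds, the $U_k$ occupy disjoint angular sectors, hence are distinct, the cycle has period exactly $n$, and the perturbation estimate above applies on all of $U_k$, giving univalence of $f$ there.

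The hard part is (iii): controlling the \emph{whole} Fatou component rather than the model channel $S_k$, and in particular ruling out that $U_k$ leaks through the bounded region $\{|z|\le R_0\}$ into a different channel. I expect this to require a careful choice of $\rho$ and $c$ --- and, if necessary, an extra polynomial or $(1+o(1))$ correction factor, in the spirit of the family $f(z)=az+bz^{k}e^{-z}(1+o(1))$ --- together with the non-normality statement (i) applied uniformly along orbits, so that the only points whose orbit escapes to $\infty$ following the prescribed cyclic pattern of directions are the points of the corresponding channel. Granting this, the construction produces, for every $n\in\N$, an entire function with a period-$n$ cycle of Baker domains on each of which it is univalent.
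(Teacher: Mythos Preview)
The paper does not prove this theorem; it is quoted there as a result of Rippon and Stallard, with no argument supplied. There is therefore no in-paper proof to compare against.

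Your outline is a reasonable heuristic, but as you yourself concede it is not a proof, and the step you label (iii) is the whole substance of the matter. Two gaps are genuine. First, the period: without (iii) you only obtain $d\mid n$, and Fatou components are global objects that can and do leak through bounded regions, so confining $U_k$ to a single angular sector is not automatic; your claim (i), that $B_k\cap\{|z|>R\}\subset J(f)$, is likewise asserted rather than proved for your specific $f$ --- the Eremenko--Lyubich tract picture gives information about escaping orbits in logarithmic tracts over $\infty$, not that every point of the anti-sector is a point of non-normality. Second, univalence: the statement requires $f$ to be univalent on all of $U_k$, not merely on the channel $S_k$. A Noshiro--Warschawski argument needs a convex domain and $\Re(e^{-i\theta}f')>0$ throughout; even granting (iii) in the form $U_k\subset\overline{G_k}\cap\{|z|\ge R_0\}$, you would still need uniform smallness of $nz^{n-1}e^{-z^{n}}$ on \emph{all} of $U_k$ and would need $U_k$ to sit inside a convex region, neither of which you have.

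In short, the proposal names a plausible candidate function but leaves exactly the hard analytic work --- controlling the global shape of the Fatou components --- undone. To make it a proof you must either close (iii) directly for a specific choice of parameters, which is delicate, or redesign the construction so that explicit absorbing domains (half-planes, strips) force the Baker domains into prescribed disjoint regions from the outset; the latter is closer in spirit to how such existence results are normally established.
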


Another transcendental entire family  with Baker domains  studied by Lauber in \cite{lauber} is the  family $f_c(z) = z - c + e^z$. More precisely  he proved: (a) If Real$(c) >0$, then the Fatou set of $f_c$ consists of only one component which is a Baker domain and
(b) If $c \in i\R$ there are two possibilities to have Baker domains:
(i) If $c = 0$, then the Fatou set consists of infinitely many Baker domains
and their pre-images.
(ii) If $c = 2\pi i \alpha$, where $\alpha$ is in the Brjuno set, then the
Fatou set of $f_c$ consists of a univalent Baker domain and its pre-images.
(iii) If $c = 2\pi i \alpha$, where $\alpha$ is irrational not in the Brjuno
set, then the Fatou set  of $f_c$ is empty. Accordingly to the previous cases, we can observe that the Baker domain  can vanish or
split into infinitely many components as Real$(c)$ goes to zero.\\

\noindent {\bf II. Baker domains for functions in class $\calp$.}\\

Mart\'{\i}-Pete in \cite{martitesis} gave the first explicit examples of functions in class $\calp$ with Baker domains which can be hyperbolic, simply parabolic or doubly parabolic. For instance,  the function $f(z)=\lambda ze^{(e^{-z}+1/z)}$,  for every $\lambda>1$, has a hyperbolic Baker domain escaping to $\infty$, and the function  $f(z)=ze^{(e^{-z}+1)/z}$ has a  simply parabolic Baker domain escaping to $\infty$; observe that in both  examples the absorbing point is $\infty$.\\

\noindent {\bf III. Baker domains for functions in class $\calm$.}\\

If $f \in \cale$ and $U$ is an unbounded component in the Fatou set it is
known that $U$ has to be simply-connected. This  result not  longer hold if
$f$ is allowed to have even one pole. For instance,  the function $f(z) = z + 2 +e^{-z} + \frac{\epsilon}{z -a}$,  where $\epsilon = 10^{-2}$ and $a = 1 + \pi
i$,  has a multiply-connected unbounded invariant component $H$ in the Fatou
set in which $f^n \to \infty$, see \cite{pat} for a proof.\\

The function $f(z) = -e^z + 1/z$ given in \cite{baker13}  has a Baker domain of
period two, this is, the function  has a cycle of Baker domains  $\{F_{\infty},F_0\}$ say, for which $\infty$ and zero are the absorbing points of $F_{\infty}$ and $F_0$ respectively. 

A generalization of the above example is the  family $f(z) =  \lambda e^z + \mu/z$, $\lambda, \mu \in \C^*$, for some parameters $\lambda, \mu$,  which has been  studied in \cite{marcos}. In this family, some of the Baker domains have infinitely many critical points. In \cite{marcotesis} some interesting problems concerning the connectivity of the Baker domains in this family were presented.\\
 
\noindent {\bf IV. Baker domains for functions in class $\calk$.}\\

The first example of a function in $\calk_2 \setminus \calp_2$ with Baker domains  was given in \cite{baker13},  as we mentioned before. The Baker domains $F_0$ and $F_\infty$ of $f(z)=-e^z +1/z$ are invariant for $f^2\in\calk_2 \setminus \calp_2$. This remain true for the functions in \cite{marcos}, in this case, note that zero has infinitely many preimages, then $f^n\in \calk_{\infty}$ for $n\geq 3$. Even more, when $n$ is even $f^n$ has two invariant Baker domains and if $n$ is odd then $f^n$ has a Baker domain of period two.

Another explicit example of a function in class $\calk$ with Baker domains is treated   in Section \ref{section6} of this paper. Independently of this work,  we know by  A. Esparza (oral communication) that using results  of Rippon and Stallard  there are functions  in class  $\calk$ of the form $f(z) = z + e^{g(z)}$, where $g(z)$ is meromorphic , having  Baker domains.

\subsection{Wandering domains}
\label{wandering}

We recall that a wandering domain is a domain which is neither periodic nor pre-periodic.
In what follows we mention some examples of wandering domain given by different authors.\\

\noindent {\bf I. Wandering  domains for functions in class $\cale$.}\\
 
 For functions in class $ \cale$  Baker in \cite{bakerw} constructed  the first example of a multiply connected wandering domain, in such example the connectivity is infinite.  Latter on examples of simply connected wandering components,  for functions in class $\cale$,   were  constructed  by Eremenko and Lyubich by using results on approximation theory, see \cite{eremenkolyubich}.  Also,   Herman  in \cite{herman1} gave  the  examples  $g(z) = z + 1 + e^{-z} + 2\pi i$ and $h(z) = z + \lambda \sin (2\pi z) + 1$, where $1 +2\pi \lambda = e^{2\pi i \alpha}$ for suitable $\alpha \in \R$ which have   simply  connected wandering components.  Since the above examples,  several mathematicians have discussed the existence of wandering components  see \cite{bergweiler1} for more examples.
 
 An open question,  posted by Baker,  of whether multiply connected  wandering components of  finite connectivity could be possible. A positive  answer was given  by Kisaka and  Shishikura who  constructed a wandering domain of connectivity two  by using 
 quasi-conformal surgery, see \cite{kisaka2} for details.\\

\noindent {\bf II. Wandering  domains for functions in class $\calp$.}\\

Baker \cite{baker2a}, Kotus  \cite{kotus2}  and Mukhamedshin  \cite{mu} showed that there are functions in class $ \calp$ having wandering  domains  by using approximation theory. Mart\'{\i}-Pete gave in \cite{martitesis} an  explicit example, $f(z)=ze^{(\sin z + 2\pi)/z}$ which has a bounded wandering domain $U$, the orbit of points in 
$U$ tend to infinity.\\

\noindent {\bf III. Wandering domains for functions in class $\calm$.}\\

In \cite{baker12} the authors presented, using  results on complex approximation,
examples of wandering domains  (bounded or unbounded) of any connectivity, so
they solved the connectivity problem for transcendental meromorphic
functions.\\

\noindent {\bf  IV. Wandering domains for functions in class $\calk$.}\\

Doubly connected wandering domains  for funtions in  class $\calk$ have been given in \cite{memopat}, by using quasi-conformal surgery.  In Section \ref{section6},  of this paper,   we give an example of a function $f \in \calk$  which has   infinity  wandering domains with disjoint orbits.

\section{Some results related to functions in class $\calk$}
\label{section4}

For a periodic component in the Fatou set, Bolsch in \cite{andreas} prove the
following theorem by using Ahlfors theory of covering surfaces.

\begin{theorem}
If $f \in \calk$  and $U$ is a periodic component of the Fatou set,  then $U$
has connectivity  1, 2 or $\infty$.
\label{uno} 
\end{theorem}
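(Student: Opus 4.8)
\textbf{Proof plan for Theorem \ref{uno}.}

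The plan is to reduce the statement to the corresponding theorem for transcendental meromorphic functions (due to Baker, Kotus and L\"u, \cite{baker11}, \cite{baker12}) and to the classical transcendental entire and $\C^*$ cases, via a careful analysis of how components of $F(f^n)$ sit relative to the countable closed set $B_n = B(f^n)$. First I would recall from the excerpt that $F(f^p) = F(f)$ and $J(f^p) = J(f)$ for every $p \geq 1$, so it suffices to bound the connectivity of an \emph{invariant} component $U$ of $F(f)$ (replace $f$ by $f^p$, where $p$ is the period of $U$). Since $U \subset F(f) \subset \chat \setminus B(f)$ and $B(f)$ is compact and countable, $\chat \setminus U$ is a compact set; the connectivity of $U$ is the number of connected components of $\chat \setminus U$. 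The strategy is to show that if $\chat \setminus U$ has at least three components, then in fact it has infinitely many, by exploiting the Picard Property (Definition \ref{Picard} and the Remark after it) together with the expansion/normality dichotomy of $f^n$ on $U$.

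The key steps, in order, would be: (1) \emph{Boundary components and their images.} Let $\{K_\alpha\}$ be the connected components of $\chat \setminus U$. Because $U$ is invariant and $f$ is an open map on $\chat \setminus B(f)$, each $f$-image (respectively $f$-preimage) of a boundary component is contained in a boundary component, at least away from $B(f)$; the components of $\chat \setminus U$ that meet $B(f)$ must be treated separately and are the only place the argument differs from the meromorphic case. Since $B(f)$ is countable, a component $K_\alpha$ that contains no point of $B(f)$ and no pole of $f^n$ for any $n$ behaves exactly as in the meromorphic theory. (2) \emph{Singleton boundary components.} Using that $J(f)$ is perfect and that a component of $\chat \setminus U$ either is a single point or has nonempty interior, one shows — exactly as in \cite{baker12} — that if there are finitely many components then either there are at most two, or $\chat \setminus U$ consists of finitely many "big" pieces; the latter is excluded by a Riemann-surface / covering argument as in Bolsch \cite{andreas}, since $f^n|_U$ would then factor through a self-map of a domain of finite connectivity $\geq 3$, which by the analogue of R$\stackrel{\circ}{\rm a}$dstr\"om's observation is dynamically impossible unless the iterates are eventually stable. (3) \emph{Assembling the trichotomy.} Combining (1) and (2): either $\chat \setminus U$ has $1$ component ($U$ simply connected), or exactly $2$ ($U$ doubly connected), or the grand orbit of a nondegenerate boundary component produces infinitely many distinct boundary components, forcing connectivity $\infty$. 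The compact countable set $B(f)$ contributes only countably many extra boundary points and cannot create an intermediate finite connectivity because each point of $B(f)$ lying on $\partial U$ is, by the Picard Property, an accumulation of grand-orbit preimages already accounted for.

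The main obstacle I expect is step (1)–(2) at the points of $B_n$: unlike in the meromorphic case, $f^n$ is not even defined at the (countably many) essential singularities and their pre-images inside $\chat \setminus U$, so the usual argument "$f$ maps a multiply connected component onto a component covering the omitted structure" must be run on $\chat \setminus B_n$ and then the behaviour near $B_n$ controlled using that $B_n$ is closed, countable and totally disconnected (so it separates nothing and each of its points is, locally, surrounded by the Julia set). Concretely, I would show that a boundary component of $U$ cannot consist \emph{solely} of points of $B^-(f)$ together with poles unless it is a single point, because around any isolated point of such a set the iterates $f^n$ omit at most two values and hence, by Montel, cannot be normal — contradicting membership in a bounded-away piece of $\chat \setminus U$. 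Once that local picture near $B_n$ is pinned down, the global count reduces cleanly to Bolsch's covering-surface argument (Theorem \ref{uno} as stated is attributed to \cite{andreas}), and the trichotomy $1,2,\infty$ follows.
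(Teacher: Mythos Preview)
The paper does not give its own proof of Theorem~\ref{uno}; it attributes the result to Bolsch \cite{andreas} and records only the method: \emph{Ahlfors' theory of covering surfaces}. That single phrase is the whole point, and it is exactly what your plan is missing.

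Your core step~(2) reads: if $U$ had finite connectivity $\geq 3$, then ``$f^n|_U$ would factor through a self-map of a domain of finite connectivity $\geq 3$, which by the analogue of R\r{a}dstr\"om's observation is dynamically impossible unless the iterates are eventually stable.'' But R\r{a}dstr\"om's theorem says only that every point of such a domain is \emph{normal} for the self-map --- and that is automatic here, since $U$ is already a Fatou component. No contradiction arises. What one actually needs is a Riemann--Hurwitz type count: if $f|_U:U\to U$ were a \emph{proper} map of degree $d$, then $(2-c)=d(2-c)-r$ with $r\geq 0$ forces $d=1$, hence $f|_U\in\mathrm{Aut}(U)$, and since $\mathrm{Aut}(U)$ is finite for planar domains of connectivity $\geq 3$ one gets $f^k|_U=\mathrm{id}$, a contradiction. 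The obstacle is precisely that when $\partial U\cap B(f)\neq\emptyset$ the restriction $f|_U$ need \emph{not} be proper (it can have infinite degree, as in any Baker domain), so Riemann--Hurwitz is unavailable. Bolsch's use of Ahlfors' covering-surface inequalities is exactly the device that replaces Riemann--Hurwitz in the non-proper setting; your boundary-component bookkeeping, the Picard property, and Montel do not recover that inequality.

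Two smaller issues: the dichotomy ``a component of $\chat\setminus U$ is either a single point or has nonempty interior'' is false in general (complementary components can be nondegenerate continua with empty interior), so step~(2) cannot start from it; and your observation that a boundary component cannot consist solely of points of $B^-(f)$ unless it is a singleton is correct (by countability) but does nothing to force $f|_U$ to be proper, which is the only place the argument is stuck. If you want to avoid Ahlfors' theory, you would need an independent proof that $\partial U\cap B(f)=\emptyset$ whenever $U$ has finite connectivity $\geq 3$; absent that, the Ahlfors inequality (as in \cite{andreas}) is the natural route.
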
  

The above  result improves the result of Baker, Kotus and L{\"u}  given in \cite{baker13} for invariant components.



Concerning completely
invariant components in the Fatou set the  Lemmas 4.1, 4.2 and 4.3  in \cite{baker13} can be generalize to functions in class $\calk$.

\begin{lemma}
Let $f \in \calk$.  A completely invariant component of the Fatou
set $U$  has connectivity either $1$ or $\infty$.  
\label{dos2} 
\end{lemma}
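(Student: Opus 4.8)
The plan is to follow the classical dichotomy argument for completely invariant Fatou components (as in Baker--Kotus--L\"u, Lemma~4.1--4.3 of \cite{baker13}), adapting it to the class $\calk$. Let $U$ be a completely invariant component of $F(f)$, so $f(U\setminus B(f))\subseteq U$ and $f^{-1}(U)\subseteq U$; since $U$ is a component of a completely invariant open set, in fact $f^{-1}(U)=U$ up to the removable singularities in $B(f)$. First I would show that $U$ is unbounded in the sense that $U$ must meet (indeed contain near) points of $B(f)$ is not needed; rather the key structural fact is that $J(f)=\partial U$. This follows because any other Fatou component $V$ would have $\partial V\subseteq J(f)$, and complete invariance of $U$ forces $J(f)=\chat\setminus F(f)$ to coincide with $\partial U$: if $w\in J(f)$, then $w\in\overline{O^-(z)}$ for a suitable $z$ (using that $J(f)$ is the closure of repelling periodic points and has no interior, plus the blowing-up property of the Julia set), and since preimages of points of $\overline U$ accumulate on all of $J(f)$, every point of $J(f)$ is a boundary point of $U$. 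Hence $\chat\setminus U = \overline{J(f)}=J(f)$ is connected or, if not, has the structure controlled below.

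Next I would invoke Theorem~\ref{uno}: a periodic — in particular a completely invariant — component $U$ of $F(f)$ has connectivity $1$, $2$, or $\infty$. So it remains only to rule out connectivity $2$. Suppose for contradiction that $U$ has connectivity $2$. Then $\chat\setminus U$ has exactly two components, say $K_1$ and $K_2$, both compact, connected, non-degenerate continua contained in $J(f)$, and $J(f)=\partial U=\partial K_1\cup\partial K_2$. The point is that complete invariance is incompatible with this: $f^{-1}(U)=U$ implies $f$ maps $\chat\setminus U$ into itself (off $B(f)$), hence permutes $\{K_1,K_2\}$ after passing to $f$ or $f^2$ — say (replacing $f$ by $f^2$, using $J(f^2)=J(f)$ and $F(f^2)=F(f)$ from property (d)) $f(K_i\setminus B(f))\subseteq K_i$ for $i=1,2$. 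Now take $e\in B(f)\subseteq J(f)$; $e$ lies in one of the $K_i$, say $K_1$. Since $e$ has the Picard Property, $f$ takes every punctured neighbourhood of $e$ onto all of $\chat$ minus at most two points, so in particular onto points of both $K_1$ and $K_2$ and of $U$ — but points of a punctured neighbourhood of $e\in K_1$ lying in $U$ must map into $U$, and those in $K_1$ near $e$ map (by continuity/invariance) near values that cannot avoid $K_2$, contradicting $f(K_1\setminus B(f))\subseteq K_1$. More cleanly: a punctured neighbourhood $V^*$ of $e$ contains points of $U$ (since $e\in\partial U$), and $f(V^*\cap U)\subseteq U$; but $f(V^*)$ omits at most two points, so $f(V^*)\supseteq K_2$, forcing a point of $V^*\cap U$ to map into $K_2\subseteq J(f)$, contradicting forward invariance $f(U\setminus B(f))\subseteq U$.

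The main obstacle I anticipate is making the last paragraph fully rigorous: one must handle the bookkeeping of $B(f)$ carefully (poles and essential singularities inside the $K_i$, and the fact that forward invariance of $U$ is stated only as $f(U\setminus B(f))\subseteq U$), and one must be careful that "$f$ permutes the complementary components" genuinely holds — this uses that $f$ is an open map on $\chat\setminus B(f)$ and that $\chat\setminus U$ is totally disconnected into exactly two pieces, so connectedness of $f(K_i\setminus B(f))$ (image of a connected set under a continuous map) lands in a single complementary component. An alternative, perhaps cleaner, route that avoids some of this is the homological/covering argument: if $U$ has connectivity $2$ it is conformally an annulus-like domain and $f|_U\colon U\setminus f^{-1}(B(f))\to U$ would be a proper map between essentially doubly connected surfaces, forcing $f$ to be of bounded degree on $U$; but then the singular-value count (via Theorem~\ref{relation} and the Picard Property, which give infinitely many singular values associated to each $e\in B(f)$) yields a contradiction with $f$ having finite degree on a neighbourhood of $B(f)$. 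I would present the topological argument as the main line and remark on the covering-space argument as an alternative.
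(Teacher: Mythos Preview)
The paper does not actually prove Lemma~\ref{dos2}; it merely asserts that Lemmas~4.1--4.3 of Baker--Kotus--L\"u \cite{baker13} ``can be generalised to functions in class~$\calk$''. So your task is really to check that the BKL argument goes through, and your overall strategy---invoke Theorem~\ref{uno} to reduce to connectivities $1,2,\infty$, then rule out $2$---is the right shape.

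However, your argument to exclude connectivity~$2$ has a genuine gap, and your ``cleaner'' reformulation is in fact incorrect. You write that $f(V^*)\supseteq K_2$ ``forc[es] a point of $V^*\cap U$ to map into $K_2$''. It does not: decomposing $V^*=(V^*\cap U)\cup(V^*\cap K_1)$ and using $f(V^*\cap U)\subseteq U$ together with $K_2\cap U=\emptyset$ only yields $K_2\subseteq f(V^*\cap K_1)$, i.e.\ some points of $K_1$ map into $K_2$. That is no contradiction with forward invariance of $U$. Your earlier version avoids this slip, but it rests on the claim that (after passing to $f^2$) $f(K_i\setminus B(f))\subseteq K_i$. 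That claim needs $K_i\setminus B(f)$ to be connected so that its continuous image lands in a single complementary component; yet $B(f)$, although countable, can contain cut-points of the continuum $K_i$, and you yourself flag that this step is where the argument is shaky. So neither version, as written, closes the case.

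Two remarks on how to proceed. First, you can at least secure the permutation for the complementary component that misses $B(f)$: if, say, $K_2\cap B(f)=\emptyset$ then $K_2$ is a genuine continuum in the domain of $f$ and $f(K_2)$ lies in a single $K_j$; the difficulty is concentrated in $K_1$. Second, a route that sidesteps the permutation entirely is to combine Theorem~\ref{uno} with the classification of periodic components: in Bolsch's analysis the connectivity-$2$ case of an invariant component is a Herman ring, on which $f$ is univalent; complete invariance then forces every preimage of a non-exceptional $w\in U$ to lie in $U$, so $f^{-1}(w)\subseteq U$ has exactly one point, contradicting the Picard property at any $e\in B(f)\subseteq\partial U$, which produces infinitely many preimages of $w$ in every punctured neighbourhood of $e$. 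This is closer in spirit to how the BKL argument is usually carried out and avoids the connectedness issue you anticipated.
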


\begin{lemma}
Let $f \in \calk$ and $U$ be a completely invariant component of the Fatou
set.  Then $\partial U = J(f)$.
\label{tres} 
\end{lemma}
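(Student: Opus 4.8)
The plan is to adapt the classical argument (originally due to Baker for entire functions and to Baker--Kotus--L\"u for meromorphic functions) to the $\calk$ setting, the only genuine new feature being the presence of the natural boundary $B(f)$. First I would record the easy inclusion: since $U$ is a Fatou component, $\partial U \subseteq J(f)$ always holds, because $\partial U$ is disjoint from $U$ but meets $\overline{U}$, and any neighbourhood of a boundary point contains points of $F(f)$ (in $U$) and points not in $U$; by complete invariance and connectedness of $U$ the latter points cannot lie in $F(f)$ either, so $\partial U \subseteq J(f)$. Thus the whole content is the reverse inclusion $J(f) \subseteq \partial U$, equivalently $J(f) \subseteq \overline{U}$ (since $U$ is open and disjoint from $J(f)$, $\overline{U}\cap J(f) = \partial U \cap J(f) = \partial U$).

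To prove $J(f)\subseteq \overline{U}$, I would argue by contradiction: suppose there is a point $z^\* \in J(f)\setminus \overline{U}$, so there is an open disc $D$ with $z^\* \in D$ and $D\cap \overline{U}=\emptyset$. The strategy is a normal-families/Montel argument. Fix three points in $\partial U$ (possible since $J(f)$, hence $\partial U$, is perfect by property (b) of the Fatou/Julia sets, once we know $\partial U$ is infinite — which follows because a completely invariant $U$ has $\partial U = \partial(F(f)\setminus U)$ large; alternatively use that $J(f)$ is perfect and contained in $\overline{U}$ on the complement of $D$). Because $U$ is completely invariant, $f^{-1}(U)\subseteq U$, hence $f^n(\chat\setminus B_n \setminus U)\subseteq \chat\setminus U$ for every $n$; in particular the family $\{f^n\}$ restricted to the disc $D$ (which lies in $D_n=\chat\setminus B_n$ for all $n$, after possibly shrinking $D$ to avoid the countable set $B^-(f)$) omits the open set $U$, and in particular omits three fixed points of $\partial U\subseteq \chat\setminus U$. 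Wait — one must be careful: $D$ may contain pre-singularities; but $B^-(f)$ is countable, so I can choose $D$ small enough and, if necessary, pass to $D\setminus B^-(f)$ or use that $B_n$ is nowhere dense to find a subdisc $D'\subseteq D$ with $D'\subseteq D_n$ for all $n$. On $D'$ the family $\{f^n\}$ omits three values, hence is normal by Montel, so $D'\subseteq F(f)$, contradicting $z^\*\in J(f)\cap D'$ (again choosing $D'$ to still meet $J(f)$, which is possible because $J(f)$ has empty interior-complement structure... more precisely because $z^\*$ is not isolated in $J(f)$ by perfectness, so nearby Julia points fall in $D'$). This contradiction gives $J(f)\subseteq \overline{U}$, and combined with the first paragraph, $\partial U = J(f)$.

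The step I expect to be the main obstacle is the handling of the natural boundary $B(f)$ and the full pre-singularity set $B^-(f)$ inside the disc $D$: the Montel argument requires the $f^n$ to be defined and meromorphic on a common domain, but $\bigcap_n D_n = \chat\setminus B^-(f)$ removes a countable dense-in-itself set, so one cannot simply take a disc. The remedy is that each $B_n$ is compact and countable, hence nowhere dense, so $\bigcup_n B_n = B^-(f)$ is meagre; by a Baire-category argument one still finds, in any disc meeting $J(f)\setminus\overline U$, a point of $J(f)$ together with a subdisc avoiding enough of $B^-(f)$ to run Montel — or, more cleanly, one applies the version of Montel's theorem for families of meromorphic functions omitting three values on a domain and notes that $J(f)\setminus B^-(f)$ is still relatively dense in $J(f)$ (since $B^-(f)$ is countable and $J(f)$ is perfect, hence uncountable, on any relatively open piece). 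Making this last density/category point precise, in a way consistent with Bolsch's framework, is the crux; the rest is the standard completely-invariant-component argument transported verbatim from \cite{baker13}.
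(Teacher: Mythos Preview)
Your overall strategy is sound, and note that the paper itself gives no proof of this lemma: it merely asserts that Lemmas 4.1--4.3 of Baker--Kotus--L\"u \cite{baker13} generalize to class~$\calk$. So you are attempting to supply what the paper omits, and your outline is essentially the Baker--Kotus--L\"u argument.

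Two small slips first. The inclusion $\partial U\subseteq J(f)$ holds simply because $U$ is a connected component of the open set $F(f)$; complete invariance plays no role there. And in the Montel step the iterates omit three points of $U$, not of $\partial U$ as you wrote.

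The substantive issue is the one you flag yourself: handling $B^-(f)$. Your proposed remedy---use Baire category to locate a subdisc $D'\subseteq D$ that still meets $J(f)$ but avoids $B^-(f)$---does not work. Although $B^-(f)$ is countable (hence meagre), it is typically \emph{dense} in $J(f)$: for any non-exceptional $e\in B(f)$ the backward orbit $O^-(e)$ is dense in $J(f)$, so generically $\overline{B^-(f)}=J(f)$. No open subdisc meeting $J(f)$ can then avoid $B^-(f)$, and your Montel argument cannot be launched on a common domain.

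The correct fix is a case split that exploits the Picard property directly rather than trying to dodge the pre-singularities. Take $D$ open with $D\cap\overline{U}=\emptyset$ and $D\cap J(f)\neq\emptyset$. If $D\cap B^-(f)=\emptyset$, then all $f^n$ are meromorphic on $D$ and omit $U$, so Montel gives normality on $D$, a contradiction. If instead some $p\in D\cap B^-(f)$, choose the least $k\geq 1$ with $p\in B_k$; then $f^{k-1}$ is meromorphic near $p$ with $f^{k-1}(p)=e\in B(f)$, and by the Picard property (Definition~\ref{Picard}) the map $f^k$ sends a punctured neighbourhood of $p$ onto $\chat$ minus at most two points. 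In particular it meets the infinite open set $U$, so some $w\in D$ satisfies $f^k(w)\in U$; backward invariance of $U$ forces $w\in U$, contradicting $D\cap U=\emptyset$. Equivalently, one may invoke once the blow-up property for class~$\calk$ (every open set meeting $J(f)$ has forward orbit covering $\chat\setminus E(f)$ with $\#E(f)\leq 2$), which packages exactly this dichotomy and is how the \cite{baker13} proof transports to the present setting.
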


\begin{lemma}
Let $f \in \calk$. If there are two or more completely invariant components in
the Fatou set, then each one is simply connected.
\label{cuatro}
\end{lemma}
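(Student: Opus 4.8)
The plan is to argue by contradiction: suppose there are two distinct completely invariant Fatou components $U_1$ and $U_2$, and suppose one of them, say $U_1$, is multiply connected. By Lemma \ref{dos2} a completely invariant component has connectivity $1$ or $\infty$, so $U_1$ must be infinitely connected. I would then exploit the fact, recorded in Lemma \ref{tres}, that the boundary of a completely invariant component equals the whole Julia set; hence $\partial U_1 = \partial U_2 = J(f)$. The strategy is to show that an infinitely connected $U_1$ "surrounds" its complementary components in a way that forces every other Fatou component — in particular $U_2$ — to be separated from $\infty$ (or from some point of $B(f)\subset J(f)$) by a bounded Jordan curve lying in $U_1$, and then to derive a contradiction with the complete invariance of $U_2$.

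The key steps, in order, would be: (1) Work on $\chat$ and normalize, using that $B(f)$ is nonempty and contained in $J(f)$, so that some essential singularity $e\in B(f)$ plays the role of a distinguished boundary point of both $U_1$ and $U_2$. (2) Since $U_1$ is infinitely connected, its complement $\chat\setminus U_1$ has infinitely many components; because $U_2$ is disjoint from $U_1$ and connected, $U_2$ lies inside a single complementary component $K$ of $\chat\setminus U_1$, and there is a Jordan curve $\gamma\subset U_1$ separating $U_2$ (together with the part of $K$ containing it) from the rest of $\chat$. (3) Use the full invariance: $f^{-1}(\gamma)\subset U_1$ separates the plane similarly, and by completely invariance of $U_2$ the component $U_2$ together with points of $J(f)=\partial U_2$ must reproduce on both sides of such separating curves, which is incompatible with $J(f)=\partial U_1$ also being accumulated from within $U_1$ on both sides of $\gamma$. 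The cleanest route is to derive that a separating curve $\gamma\subset U_1$ has a bounded complementary component containing points of $J(f)$; since $f(\gamma)$ is then a compact subset of $U_1$ while $f$ maps across the Julia set surrounded by $\gamma$, one contradicts the maximum principle / complete invariance exactly as in the proof of Lemmas 4.1–4.3 of \cite{baker13}, the point being that the argument there uses only topological separation properties of completely invariant components plus $\partial U = J(f)$, both of which we now have in class $\calk$ from Lemmas \ref{dos2} and \ref{tres}.

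The main obstacle I anticipate is handling the set $B(f)$ of essential singularities correctly inside these separation arguments: unlike the purely meromorphic case, $f$ is not even defined on $B(f)$, and a separating curve in $U_1$ may surround points of $B(f)$, so one must check that the preimage curves $f^{-1}(\gamma)$ still behave well (they lie in $U_1$ by complete invariance, and they still separate, since $f$ restricted to $\chat\setminus B(f)$ is an open meromorphic map). A secondary technical point is that "connectivity $\infty$" must be used in the strong form that there exist arbitrarily small separating Jordan curves in $U_1$ enclosing points of $J(f)$; this follows from infinite connectivity together with $\partial U_1 = J(f)$. Once these points are in place, the contradiction with complete invariance of the second component $U_2$ is the same as in \cite{baker13}, so the real work is purely in verifying that the generalization of Lemmas 4.1–4.3 — already asserted in the excerpt — legitimately applies, i.e. that no step secretly used analyticity at points of $B(f)$.
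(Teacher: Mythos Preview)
The paper does not give an explicit proof of Lemma~\ref{cuatro}; it simply states that Lemmas 4.1--4.3 of \cite{baker13} generalize to class~$\calk$. So there is no detailed argument to compare against, only the implicit claim that the Baker--Kotus--L\"u proof carries over verbatim.

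Your proposal contains the right ingredients in steps~(1)--(2), but step~(3) is an unnecessary detour. Once Lemma~\ref{tres} gives $\partial U_1=\partial U_2=J(f)$, the contradiction is purely topological and requires no dynamics at all: if $U_1$ is multiply connected then $\chat\setminus U_1$ has at least two components, each of which is closed in $\chat$ and contains points of $\partial U_1=J(f)$; since $U_2$ is connected and disjoint from $U_1$, it lies in a single component $K$ of $\chat\setminus U_1$, hence $\overline{U_2}\subset K$ and so $J(f)=\partial U_2\subset K$, contradicting the fact that the other components of $\chat\setminus U_1$ also meet $J(f)$. This is exactly the argument of Lemma~4.3 in \cite{baker13}, and it uses nothing about $f$ beyond Lemma~\ref{tres}.

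Your invocation of $f^{-1}(\gamma)$, the maximum principle, and complete invariance of $U_2$ in step~(3) is the machinery behind Lemma~\ref{dos2} (connectivity $1$ or $\infty$), not Lemma~\ref{cuatro}; you are conflating the two proofs. Consequently the ``main obstacle'' you anticipate---handling $B(f)$ correctly inside preimage and separation arguments---never arises here, because no preimages are taken. The generalization to class~$\calk$ is therefore immediate once Lemma~\ref{tres} is in hand, with no subtleties involving $B(f)$.
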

 
As an analogy with the definition given by Eremenko and Lyubich in
\cite{eremenko} we define the  class $\mathcal{S}_{\calk}$  as the set of
functions $f \in \calk$ such that the set of singular values is finite. We denote the sets 
$\mathcal{S}_{\cale}$, $\mathcal{S}_{\calp}$ and $\mathcal{S}_{\calm}$ for the classes of functions
$\cale$, $\calp$ and $\calm$ respectively.\\

For functions in class $\mathcal{S}_{\calm}$ it was proved by Baker, Kotus amd L\"u \cite{baker13} that the Fatou set has at most  most two completely invariant domains. 
It is an open question   $f \in \calm$ have at most two completely invariant domains. The same question  is open for functions in class $\calk$.

In \cite{baker14} and  in \cite{eremenko} the authors proved that  for
functions   in class  $\cale$ and  $\calm$,   with a finite set
of singular values there are neither  wandering  components nor Baker
domains. The same statement works for functions in class $\calk$ and  the
proofs are similar to those in \cite{baker14} or in \cite{adam1},  so we state the following Theorem.

\begin{theorem}
For functions in class $\mathcal{S}_{\calk}$ there are neither wandering
domains nor Baker domains.
\label{cinco}
\end{theorem}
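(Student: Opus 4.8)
The plan is to follow the classical Eremenko--Lyubich / Baker--Kotus--L\"u argument, adapting each step to the fact that $f$ may now have several essential singularities, i.e. $B(f)$ has $\#B(f)\geq 1$ points rather than just $\{\infty\}$. First I would fix $f\in\mathcal{S}_{\calk}$ and let $P=\overline{SV^+(f)}$ be the closure of the forward orbit of the (finite) singular set $SV(f)$; since $SV(f)$ is finite, $P$ is a \emph{finite} set. The key geometric input is a hyperbolic-metric / logarithmic-change-of-variable estimate: away from a neighbourhood of $P\cup B(f)$, the map $f$ is expanding in a suitable metric. Concretely, on the domain $\chat\setminus(P\cup B(f))$ one equips things with the hyperbolic metric (or the standard Eremenko--Lyubich logarithmic coordinate near each transcendental singularity $e\in B(f)$, which after normalization can be taken to be $\infty$ for the branch under consideration) and shows that any branch of $f^{-1}$ mapping into this region is a contraction, equivalently $f$ is uniformly expanding along orbits that stay away from $P$. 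This is exactly where one uses $\#SV(f)<\infty$: the Fatou components that could obstruct expansion are controlled by Theorem~\ref{relation} and by Baker's wandering-domain theorem stated just above it.

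Next I would rule out wandering domains. Suppose $U$ is a wandering domain with $U_n=f^n(U)$. By the theorem of Baker quoted in the excerpt (the one immediately after Theorem~\ref{relation}, for the class $\calk$), every limit function of $(f^n)$ on $U$ is a constant lying in the accumulation set of $SV^+(f)$; but $SV^+(f)$ is finite, hence has no accumulation points, so in fact there are no limit functions at all unless one is an essential singularity $e\in B(f)$ — and a constant limit equal to $e\in B(f)$ would force the $U_n$ eventually to lie in a shrinking neighbourhood of $e$ on which $f$ is not even defined at the centre, so the orbit escapes to $B(f)$. One then separately treats the escaping case using the logarithmic coordinate near $e$: in that coordinate $f$ acts like $w\mapsto e^{w}$-type expansion, a classical argument (distortion estimates for a univalent branch on a large disc, Koebe) shows the diameters of $U_n$ cannot stay bounded away from $0$ while the components remain disjoint, contradicting that $U$ is wandering (the $U_n$ would have to grow and overlap, or else they cluster at a point and normality extends across, contradicting $U\subset F(f)$ being a genuine new component each time). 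Either way no wandering domain survives.

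Then I would rule out Baker domains. If $U$ is a (say invariant, after passing to $f^p$ using $F(f^p)=F(f)$) Baker domain with absorbing point $z_0$, then by the results in Section~\ref{baker} $z_0\in B^-(f)$, and for a cycle at least one absorbing point lies in $B(f)$; so after normalization we may assume $f^n\to e$ in $U$ with $e\in B(f)$ a transcendental singularity. Using Corollary~\ref{asymtotic} (extended to class $\calk$ as noted in the excerpt: every local Picard exceptional value is an asymptotic value) together with finiteness of $SV(f)$, the singularity $e$ has only finitely many "directions", and in the logarithmic coordinate near $e$ the restriction $f|U$ is eventually univalent with large derivative; the Eremenko--Lyubich expansion estimate then shows $\mathrm{dist}_U$-distances between orbit points grow geometrically, so $f^n|U$ cannot converge locally uniformly to a constant — contradicting the defining property of a Baker domain. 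This is the step I expect to be the main obstacle: making the logarithmic-coordinate expansion estimate work \emph{uniformly near a point $e\in B(f)$ that may be an accumulation of other essential singularities} (not an isolated one), since the model map near $e$ is then not simply an exponential and one must invoke Bolsch's structure of $B(f)$ as a compact countable set to get a punctured neighbourhood of $e$ meeting $B(f)\setminus\{e\}$ in a small set that can be absorbed into $P$. Once the expansion estimate is established in that generality, the wandering- and Baker-domain arguments go through verbatim as in \cite{baker14} and \cite{adam1}, which the excerpt permits us to cite.
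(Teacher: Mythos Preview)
Your proposal contains a genuine error in the wandering-domain argument. You assert that since $SV(f)$ is finite, the postcritical set $P=\overline{SV^+(f)}=\overline{\bigcup_{n\ge 0}f^n(SV(f))}$ is finite. This is false: finiteness of the singular set says nothing about finiteness of the \emph{forward orbits} of those finitely many points, and for a generic $f\in\mathcal{S}_{\calk}$ the postcritical set is infinite (think of any $f$ with a singular value whose orbit is not eventually periodic). Your subsequent deduction --- that $SV^+(f)$ has no accumulation points, hence Baker's limit-function theorem leaves only constants in $B(f)$ as possible limits --- therefore collapses. The ``escaping case'' you treat afterwards is not the only remaining case: the iterates on a wandering domain could perfectly well converge to a finite accumulation point of $SV^+(f)$ lying in $J(f)\setminus B(f)$, and nothing in your sketch excludes this.

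The paper itself does not give a proof; it simply asserts that the arguments of Baker--Kotus--L\"u \cite{baker14} and Epstein \cite{adam1} carry over. Those arguments handle wandering domains by quasiconformal deformation theory in the Sullivan style: one builds an infinite-dimensional family of Beltrami coefficients supported on the grand orbit of a wandering domain and shows that the resulting deformations of $f$ would have to live in a finite-dimensional parameter space (parametrised by the finitely many singular values), a contradiction. This is an entirely different mechanism from the limit-function/expansion route you propose, and it is the step you are missing. Your Baker-domain argument, by contrast, is essentially on the right track: the Eremenko--Lyubich logarithmic-coordinate expansion does show that for $f\in\mathcal{S}_{\calk}$ no Fatou component can have iterates converging to a point of $B(f)$, and this part does adapt to class~$\calk$ with only notational changes (your worry about non-isolated $e\in B(f)$ is not serious, since the absorbing point of an invariant Baker domain is always an isolated essential singularity of the relevant iterate).
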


\begin{theorem} \label{julianoespolvo}
 Let $f\in S_{\calk}$ and suppose that there is an attracting fixed
 point whose Fatou component $H$ contains all the singular values of
 $f$. Then $J(f)$ is totally disconnected.
\label{seis}
\end{theorem}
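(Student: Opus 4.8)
The strategy is the classical ``no critical points on the Julia set'' argument, adapted to the class $\calk$. Since $f \in S_{\calk}$, the singular value set $SV(f)$ is finite, and by hypothesis all of $SV(f)$ lies in the immediate attracting basin $H$ of the attracting fixed point. The first step is to understand the dynamics near the Julia set: by Theorem \ref{cinco}, functions in $S_{\calk}$ have no wandering domains and no Baker domains, and by Theorem \ref{relation}(a) every attracting or parabolic cycle must meet $SV(f)$; since $SV(f) \subset H$ and $H$ is the basin of a single attracting fixed point, there can be no Siegel discs or Herman rings either (their boundaries would lie in $\overline{SV^+(f)}$, but $SV^+(f) \subset H$ accumulates only at the fixed point). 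Hence the Fatou set $F(f)$ is precisely the attracting basin $A(p)$ of the fixed point $p$, with no other components. In particular $J(f) = \chat \setminus A(p)$, and $J(f)$ contains $B(f)$.

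\textbf{Key steps.} Next I would pass to a sub-basin on which $f$ is a covering. Choose a simply connected neighbourhood $V \subset H$ of the fixed point $p$ with $\overline{V} \subset H$, $\overline{V} \cap SV(f) = SV(f)$ (so $V$ contains all singular values), and $f(\overline{V}) \subset V$ (possible since $p$ is attracting and we may take $V$ small, then enlarge along the orbit). Let $W_0 = V$ and $W_{n+1}$ be the component of $f^{-1}(W_n)$ containing $W_n$; then $W_0 \subset W_1 \subset W_2 \subset \cdots$ and $\bigcup_n W_n = A(p) = F(f)$. Because $W_n \supset V \supset SV(f)$ for all $n$, each $f \colon W_{n+1} \setminus f^{-n-1}(B(f)) \to W_n \setminus f^{-n}(B(f))$ is an unbranched covering (no singular values, no essential singularities in the range of the relevant restriction); moreover $V$ is simply connected and disjoint from $J(f)$. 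The plan is then to show each $W_n$ is simply connected by induction: $W_0$ is simply connected; if $W_n$ is simply connected and $f \colon W_{n+1} \to W_n$ is a covering map onto a simply connected set minus possibly some points of $B(f)$, one argues, using that $B(f) \cap W_n = \emptyset$ (as $B(f) \subset J(f)$), that $W_{n+1}$ is simply connected too. Consequently $F(f) = \bigcup W_n$ is an increasing union of simply connected domains, hence connected and simply connected, so its complement $J(f)$ is a connected$\,$--$\,$complement set; combined with the fact that $J(f)$ is perfect and has empty interior, plus the covering/expansion structure, one concludes $J(f)$ is totally disconnected. The cleanest route for the last step is a hyperbolic-metric / expansion argument: on a neighbourhood of $J(f)$ in $\chat \setminus \overline{V}$ the map $f$ is expanding in the hyperbolic metric of $\chat \setminus \overline{V}$ (since $f$ maps this set into $\chat \setminus V \subsetneq \chat \setminus \overline{V}$ and is a covering), so diameters of components of $f^{-n}(\chat \setminus \overline{V})$ shrink to $0$, forcing every connected component of $J(f)$ to be a point.

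\textbf{Main obstacle.} The delicate point is the presence of the essential singularities $B(f) \subset J(f)$: one must make sure that in pulling back $V$ one never ``catches'' a point of $B(f)$ in the interior of some $W_n$, and that the covering maps $f \colon W_{n+1} \to W_n$ are genuinely unbranched coverings in the $\calk$-setting (where $f$ is not globally defined). This is handled by the observation $B(f) \subset J(f) = \chat \setminus F(f)$, so $B(f) \cap W_n = \emptyset$ for every $n$, together with the classification of singularities in Section \ref{section2} (algebraic and transcendental singularities of $f$ are all accounted for by $SV(f) \cup B(f)$). A secondary technical point is verifying expansion in the hyperbolic metric near $J(f)$ despite $J(f)$ being unbounded and containing $B(f)$; here one works on the sphere $\chat$ with a spherical-type metric and uses that $f$ restricted to a suitable neighbourhood of $J(f)$ omits the open set $V$, invoking Montel-type normality (exactly as in the definition of $F(f)$) to get uniform contraction of the inverse branches. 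Once that is in place, total disconnectedness of $J(f)$ follows in the standard way.
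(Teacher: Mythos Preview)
The paper states this theorem without proof, so there is no ``paper's own proof'' to compare against; I can only assess your argument on its merits.

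Your hyperbolic--metric/shrinking--preimages approach in the second half is the correct strategy, but the first half of your ``Key steps'' contains a genuine error that you should discard entirely. You claim by induction that each $W_{n}$ (the component of $f^{-1}(W_{n-1})$ containing $W_{n-1}$) is simply connected, and hence that $F(f)=\bigcup_n W_n$ is simply connected. But if $F(f)$ were simply connected in $\chat$, its complement $J(f)$ would be \emph{connected}, which is the opposite of what you are trying to prove. The inductive step fails precisely because of the asymptotic values: you write that $f\colon W_{n+1}\to W_n$ is a covering ``onto a simply connected set minus possibly some points of $B(f)$'', and then dismiss those points because $B(f)\cap W_n=\emptyset$. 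The points that are actually missing from the image, however, are the \emph{omitted asymptotic values} of $f$, and those lie in $SV(f)\subset V\subset W_n$, not in $B(f)$. Thus $f\colon W_{n+1}\to W_n\setminus\{\text{asymptotic values}\}$ is a covering of a multiply punctured domain, and $W_{n+1}$ need not be simply connected. Concretely, for $f(z)=\lambda\tan z$ with small $\lambda$, already $W_2$ surrounds every pole $\pi/2+k\pi$ and is infinitely connected.

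What does work is essentially what you sketch at the end: set $\Omega=\chat\setminus\overline{V}$. Since $SV(f)\subset V$, the map $f\colon f^{-1}(\Omega)\to\Omega$ is an unbranched covering onto the simply connected domain $\Omega$, so every component of $\Omega_n:=f^{-n}(\Omega)$ is mapped \emph{conformally} onto $\Omega$ by $f^n$. Choosing $V'\supset\overline V$ slightly larger (still with $\overline{V'}\subset H$) and setting $\Omega''=\chat\setminus\overline{V'}$, one has $J(f)\subset\Omega''$ and $\overline{\Omega''}$ compact in $\Omega$; each component $U''$ of $\Omega''_n$ satisfies $\overline{U''}\subset U\subset\Omega_n$ for the corresponding component $U$ of $\Omega_n$, so the closures $\overline{U''}$ are pairwise disjoint with Jordan--curve boundaries lying in $F(f)$. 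A normal--families argument (the inverse branches $g_U\colon\Omega\to\Omega$ can only have constant limits, since any non--constant limit would map $\Omega$ into $J(f)$) shows that $\sup_{U''}\operatorname{diam}(\overline{U''})\to 0$ as $n\to\infty$. Given distinct $z_0,z_1\in J(f)$, pick $n$ with all such diameters below $\tfrac13\operatorname{dist}(z_0,z_1)$ and a point $w_0\in J(f)\setminus B^-(f)$ close to $z_0$; the Jordan curve $\partial U''\subset F(f)$ around $w_0$ then separates $z_0$ from $z_1$. This yields total disconnectedness. The only genuinely new care needed in class $\calk$ is to check that the countable set $B(f^n)$ does not obstruct this separation, and the two--level construction $(\Omega''\subset\Omega)$ handles that cleanly.
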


For functions in class $\cale$ and class $\calp$ we must recall that $\infty$  is an exceptional Picard value, which is in the  Julia set and it is a singular value of $f$.  Thus functions in class $\cale$ and $\calp$ do not satisfy the conditions of Theorem \ref{julianoespolvo}.\\

The following result is a corollary of Theorem 9.1.1 in \cite{herr} which applies to functions in class $\calk$.

\begin{theorem}
Let $f\in \calk$. If $U$ is a component of $F(f)$such that $B(f)\cap \partial U=\emptyset$, then $f:U\rightarrow f(U)$ is a finite branched cover.

\end{theorem}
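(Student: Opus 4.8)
The plan is to deduce the statement from the cited Theorem 9.1.1 in \cite{herr} by verifying that, under the hypothesis $B(f)\cap\partial U=\emptyset$, the restriction $f\colon U\to f(U)$ is a proper holomorphic map between Riemann surfaces, and then invoking the classical fact that a non-constant proper holomorphic map of plane domains (or domains in $\chat$) is a branched covering of finite degree. First I would record the elementary observation that, since $U$ is a Fatou component, $U\subset \chat\setminus B^-(f)$ is disjoint from every essential singularity of $f$; in particular $f$ is meromorphic on a neighbourhood of $\overline U$ intersected with $\chat\setminus B(f)$, and by hypothesis $\partial U$ itself avoids $B(f)$, so $f$ is actually meromorphic on an open set containing $\overline U$. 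Thus $f|_{\overline U}$ is continuous into $\chat$ and holomorphic on $U$.

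Next I would establish properness. The key point is that $f$ maps $\partial U$ into $\partial f(U)$: because $F(f)$ is completely invariant, $f(U)$ is again a Fatou component, so $f(U)$ is open; and if some boundary point $z_0\in\partial U$ had $f(z_0)\in f(U)$, then $f$ would be an open map near $z_0$ (it is holomorphic and non-constant there, with no pole issues since $f(z_0)\in f(U)\subset\chat$ is a regular value direction, or if $f(z_0)=\infty$ one argues with $1/f$), forcing a neighbourhood of $z_0$ into $f(U)$ and hence points of both $U$ and its exterior to map into the component $f(U)$ — this contradicts $f^{-1}(f(U))\subset F(f)$ being a union of Fatou components, with $U$ one of them and $z_0\notin F(f)$ lying on its boundary. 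Hence $f(\partial U)\subset\partial f(U)$, and since $\partial U$ is compact (it avoids $B(f)$ and lies in $\chat$) the standard criterion gives that $f\colon U\to f(U)$ is proper: the preimage of a compact $K\subset f(U)$ is closed in $\overline U$, cannot meet $\partial U$, hence is a compact subset of $U$.

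A non-constant proper holomorphic map between Riemann surfaces is a finite branched covering, so $f\colon U\to f(U)$ is a finite branched cover and the theorem follows. The main obstacle I anticipate is the careful handling of poles: $f$ may have poles in $\overline U$, so one should work in the $\chat$-topology throughout and, where necessary, compose with a Möbius chart near points mapping to $\infty$; none of this is serious, but it is the place where one must be precise rather than wave hands. A secondary point worth spelling out is why $\partial U$ is compact — this is exactly where the hypothesis $B(f)\cap\partial U=\emptyset$ is used, together with compactness of $\chat$, since without it a boundary point could be an essential singularity and the map would fail to be proper or even defined there. Finally, I would remark that the conclusion is sharp in the sense that if $B(f)$ does meet $\partial U$ one genuinely loses finiteness of the degree, consistent with the behaviour of Baker-domain type components discussed earlier.
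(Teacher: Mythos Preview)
The paper itself gives no proof of this statement: it simply records it as a corollary of Theorem~9.1.1 in Herring's thesis \cite{herr}, with no argument whatsoever. Your proposal therefore cannot be compared with ``the paper's own proof'' in any meaningful sense; what you have written is a self-contained direct argument that the paper does not supply.

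Your properness argument is essentially correct, and indeed is the standard way to prove such a statement. Two small points are worth tightening. First, the forward invariance of the Julia set already gives $f(\partial U)\subset J(f)$ directly (since $\partial U\subset J(f)$ and $\partial U\cap B(f)=\emptyset$), so $f(\partial U)\cap f(U)=\emptyset$; there is no need for the slightly roundabout open-mapping argument you sketch. Second, your remark that the hypothesis $B(f)\cap\partial U=\emptyset$ is used to make $\partial U$ compact is not accurate: $\partial U$ is automatically compact as a closed subset of $\chat$. The hypothesis is used precisely where you \emph{first} invoked it, namely to guarantee that $f$ extends continuously (indeed meromorphically) to all of $\overline U$, so that a sequence $z_n\in U$ converging to a boundary point $z_0$ has $f(z_n)\to f(z_0)$. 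Without this, $z_0$ could be an essential singularity and the values $f(z_n)$ could cluster anywhere in $\chat$, destroying properness. With these clarifications your argument is complete and correct.
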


\begin{proposition}
Let  $f \in \calk$,  $U$ be a Fatou multiply connected component, $\gamma
\subset U$  be a simple closed curve and $\Delta$  be  the compact region
bounded by $\gamma$  such that $\Delta\cap B(f)=\emptyset$, $\Delta$ has no singular values of $f$ and $Int(\Delta) \cap J(f) \neq \emptyset$. Then every $V \subset f^{-1}(U)$ a
connected component is multiply connected.  
\label{seis2}
\end{proposition}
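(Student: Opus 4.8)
The plan is to exploit the fact that a multiply connected Fatou component $U$ of $f\in\calk$ must contain a simple closed curve $\gamma$ which is not null-homotopic in $\chat\setminus J(f)$; equivalently, since $\chat\setminus U$ has at least two components, one of the complementary components of $\gamma$ meets $J(f)$, and by hypothesis we may take this to be $\mathrm{Int}(\Delta)$. Now let $V\subset f^{-1}(U)$ be a connected component. Since $\Delta\cap B(f)=\emptyset$ and $\Delta$ contains no singular values of $f$, the preimage $f^{-1}(\Delta)$ behaves like an unbranched covering over $\Delta$ away from $B(f)$: more precisely, by the theorem just preceding this proposition, $f:V\to U$ is a finite branched cover, and branching can only occur over critical values, of which there are none in $\Delta$. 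First I would pick a component $\gamma'\subset f^{-1}(\gamma)$ lying in $V$ and let $\Delta'$ be the compact region it bounds with $\mathrm{Int}(\Delta')\subset V$ is the working hypothesis I must establish or circumvent.

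The key step is a monodromy/lifting argument. Choose a point $a\in\mathrm{Int}(\Delta)\cap J(f)$; then $a\notin U$, and since $J(f)$ is completely invariant, $f^{-1}(a)\subset J(f)$, so $f^{-1}(a)\cap U=\emptyset$. On the other hand, since $\Delta$ contains no singular values and meets $B(f)$ nowhere, the restriction $f:f^{-1}(\Delta)\setminus f^{-1}(B(f))\to\Delta$ is a covering map (possibly with finitely many branch points over $C(f)\cap\Delta=\emptyset$, hence genuinely unbranched over $\Delta$). Lift the curve $\gamma=\partial\Delta$: each lift $\gamma'$ is a closed curve (it closes up because $\gamma$ bounds a simply connected region $\Delta$ free of singular and essential-singular values, so there is no monodromy obstruction forcing $\gamma'$ to be an arc). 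Take the lift $\gamma'$ contained in $V$; it bounds a region, and its "inside" $\Delta'$ (the component of $\chat\setminus\gamma'$ not containing the rest of $V$'s approach to $\partial V$) maps properly onto $\Delta$ under $f$. Then $f(\Delta')=\Delta$, so $f\big(\mathrm{Int}(\Delta')\big)\supset \mathrm{Int}(\Delta)$, whence $\mathrm{Int}(\Delta')$ contains a point of $f^{-1}(J(f))\subseteq J(f)$, i.e. $\mathrm{Int}(\Delta')\cap J(f)\neq\emptyset$. Since $\gamma'\subset V\subset F(f)$ while $\mathrm{Int}(\Delta')\cap J(f)\neq\emptyset$, the curve $\gamma'$ is not contractible within $F(f)$, so $V$ is multiply connected.

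The main obstacle I expect is justifying that a lift $\gamma'$ of $\gamma$ is itself a \emph{closed} curve lying in $V$ and that the "inside" region $\Delta'$ is well-defined and maps onto $\Delta$ — this requires that $\Delta$ contains no singular values \emph{and} no points of $B(f)$ (both in the hypothesis) so that $f$ has no monodromy over $\Delta$ and so that $\Delta'$ does not run into a natural boundary; one must also check that $\gamma'$ can be chosen inside the particular component $V$, which follows because $f(V)=U\supset\gamma$. A secondary subtlety is ruling out the degenerate possibility that $\Delta'$ contains $\partial V$-accumulation on its "wrong" side; here one uses that $\partial U\supset$ (part of) $J(f)$ and $\partial V\subset J(f)$ together with the finite-degree property of $f:V\to U$ from the preceding theorem. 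Once these topological points are pinned down, the conclusion that $V$ has connectivity $\geq 2$ — hence by Theorem \ref{uno} connectivity $2$ or $\infty$ — is immediate.
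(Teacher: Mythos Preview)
Your argument is correct and follows essentially the same route as the paper: lift $\gamma$ through the unbranched covering $f$ over $\Delta$, observe that the lifted disc meets $J(f)$ in its interior while its boundary lies in $V$, and conclude $V$ is multiply connected. The paper phrases this slightly more economically by taking a component $D$ of $f^{-1}(\Delta)$ meeting $V$ directly (so that $\partial D$ is automatically your $\gamma'$ and $D$ your $\Delta'$), which sidesteps the ``which side is the inside'' worry you flag. One small correction: your appeal to the preceding theorem to conclude that $f:V\to U$ is a \emph{finite} branched cover is not quite right, since that theorem requires $B(f)\cap\partial V=\emptyset$, which is not assumed; fortunately finiteness of the degree is never used---all you need is that $f:V\to U$ is a (possibly infinite-sheeted) branched covering and is unbranched over $\Delta$, which follows from the hypotheses on $\Delta$ alone.
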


{\bf Sketch  of the proof of Proposition \ref{seis2}.}  Let $V\subset f^{-1}(U)$ a connected component. As $f$ is a covering on $V$ to $U$ (possibly branched), then there is $D\subset f^{-1}(\Delta)$ a connected component such that $D\cap V\neq \emptyset$. Since $f$ on $D$ is a covering (no branched) to $\Delta$, then $Int(D)$ is a simply connected domain such that $Int(D)\cap J(f)\neq \emptyset$, $D\cap B(f)=\emptyset$ and $f(\partial D)=\gamma$, therefore $\partial D\subset F(f)$, so $\partial D\subset V$, thus $V$ is multiply connected.

\begin{corollary}
If  $f \in S_{\calk}$, $U$ is a Fatou component  of infinity connectivity,
 and   $V \subset f^{-1}(U)$ is a  connected component, then $V$ is of infinity 
connectivity.
\label{seis1}
\end{corollary}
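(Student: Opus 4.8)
The plan is to run the proof of Proposition~\ref{seis2} simultaneously along infinitely many pairwise disjoint curves $\gamma_i$, one enclosing each of infinitely many ``holes'' of $U$, and then to check that the discs $D_i$ these produce inside $V$ give rise to infinitely many distinct complementary components of $V$. I would begin with the topology of $U$: as $U$ has infinite connectivity, $E:=\chat\setminus U$ is a compact set with infinitely many components, each component $K$ has nonempty boundary, and $\partial K\subset\partial U\subset J(f)$, so $K\cap J(f)\neq\emptyset$. Since $f\in S_{\calk}$ the set $SV(f)$ is finite and $B(f)$ is countable, so all but countably many components of $E$ avoid $SV(f)\cup B(f)$. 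Using the standard plane-topology fact that a component of a compact subset of $\chat$ can be enclosed by a Jordan curve lying in any prescribed neighbourhood and disjoint from that compact set — and iterating it, peeling off one ``clean'' component at a time while infinitely many remain — I would obtain pairwise disjoint smooth Jordan curves $\gamma_i\subset U$ $(i\in\N)$ bounding pairwise disjoint closed Jordan regions $\Delta_i$ with $\Delta_i\cap(SV(f)\cup B(f))=\emptyset$, each $\Delta_i$ enclosing a component $K_i$ of $E$. Each pair $(\gamma_i,\Delta_i)$ then satisfies the hypotheses of Proposition~\ref{seis2}, since $Int(\Delta_i)\cap J(f)\supset K_i\cap J(f)\neq\emptyset$.

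For each $i$ I would then quote the proof of Proposition~\ref{seis2}: it produces a connected component $D_i$ of $f^{-1}(\Delta_i)$, meeting $V$, on which $f$ restricts to a homeomorphism onto $\Delta_i$ (unbranched because $\Delta_i$ carries no singular value, proper because $\Delta_i$ avoids $B(f)$, and trivial because $\Delta_i$ is simply connected), so that $D_i$ is a closed Jordan region, $\partial D_i$ is a Jordan curve contained in $V$, and $Int(D_i)\cap J(f)\neq\emptyset$ — the last by pulling a point of $Int(\Delta_i)\cap J(f)$ back into $Int(D_i)$ and using $f^{-1}(J(f))\subset J(f)$. In particular $Int(D_i)\not\subset V$, so $\partial D_i$ is an essential Jordan curve in $V$. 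Moreover the $D_i$ are pairwise disjoint, because $D_i\subset f^{-1}(\Delta_i)$ and $f^{-1}(\Delta_i)\cap f^{-1}(\Delta_j)=f^{-1}(\Delta_i\cap\Delta_j)=\emptyset$ for $i\neq j$.

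Finally I would harvest the complementary components. Pick $p_i\in Int(D_i)\setminus V$ and let $C_i$ be the connected component of $\chat\setminus V$ through $p_i$. Since $\partial D_i\subset V$, the connected set $C_i$ is disjoint from the Jordan curve $\partial D_i$; as it meets the open set $Int(D_i)$ it must be contained in $Int(D_i)$. The $Int(D_i)$ being pairwise disjoint, the $C_i$ are pairwise disjoint components of $\chat\setminus V$; hence $\chat\setminus V$ has infinitely many components, i.e.\ $V$ has infinite connectivity, as claimed.

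The step I expect to be the real work is the extraction of \emph{infinitely many} disjoint clean curves in the first paragraph. It could be obstructed only if $B(f)$ met all but finitely many components of $E$; but $B(f)$ is merely countable while $J(f)=\chat\setminus F(f)\supset\partial U$ is perfect, so such a configuration is either impossible or can be defused by the same splitting argument carried out with more care. Everything else is a direct transcription of the mechanism already spelled out in the proof of Proposition~\ref{seis2}.
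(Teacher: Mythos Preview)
Your approach is precisely what the paper intends: the corollary is stated immediately after Proposition~\ref{seis2} with no proof at all, so it is meant to follow by running that proposition along infinitely many disjoint curves $\gamma_i$, the hypothesis $f\in S_{\calk}$ being invoked so that only finitely many complementary components of $U$ can carry a singular value. Your extraction of pairwise disjoint closed Jordan regions $D_i$ inside $V$ and the harvesting of pairwise distinct complementary components $C_i\subset Int(D_i)$ are correct and more carefully argued than anything the paper writes down.

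The issue you flag in your last paragraph, however, is a genuine gap, and your proposed fix does not close it. You need infinitely many complementary components $K_i$ of $U$ with $K_i\cap B(f)=\emptyset$, but for $f\in S_{\calk}$ the set $B(f)$ is only guaranteed countable, not finite (e.g.\ $f=\tan\circ\tan$ has four singular values yet $B(f)=\{(k+\tfrac12)\pi:k\in\Z\}\cup\{\infty\}$). Your sentence ``all but countably many components of $E$ avoid $SV(f)\cup B(f)$'' is true but may be vacuous, since $E$ can perfectly well have only countably many components. And the appeal to $J(f)$ being perfect does not help: perfectness and uncountability of $J(f)$ only force some $K_i$ to be larger than a point, not to miss $B(f)$; nothing you have said rules out a configuration with countably many $K_n$, each containing a point of $B(f)$. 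So the hypothesis $\Delta_i\cap B(f)=\emptyset$ of Proposition~\ref{seis2} is not secured for infinitely many $i$. The paper does not address this point either; one way to repair the argument would be to split into the cases $\partial V\cap B(f)=\emptyset$ (where $f:V\to U$ is a finite-degree branched cover and a Riemann--Hurwitz count finishes) and $\partial V\cap B(f)\neq\emptyset$ (where a \emph{single} admissible $\Delta$ already has infinitely many disjoint preimage discs in $V$, because $f$ has infinite degree near the singularity).
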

 
\begin{proposition}
Let  $f \in \calk$,  $U$ be a Fatou multiply connected component, $\gamma
\subset U$  be a simple closed curve and $\Delta$  be  the compact region
bounded by $\gamma$  such that $Int(\Delta) \cap J(f) \neq \emptyset$, $\Delta
\cap B(f) = \emptyset$ and $\Delta$ has no poles. If $V$ is the connected component in $F(f)$ containing $f(U)$, then $V$ is multiply connected.  
\label{seis3}
\end{proposition}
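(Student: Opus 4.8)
\noindent\emph{Plan of proof of Proposition \ref{seis3}.} The plan is to run the image–side counterpart of the covering argument sketched for Proposition \ref{seis2}, with the argument principle playing the role of the covering property. If $f(U)\subseteq U$ then $V=U$ is multiply connected by hypothesis, so from now on I may assume $V\neq U$. First I would record the set‑up. Since $\Delta$ is compact, misses the closed set $B(f)$, and contains no poles, $f$ is holomorphic on an open neighbourhood of $\Delta$ and $f(\Delta)\subseteq\C$. As $\gamma\subseteq U\subseteq F(f)$ and $F(f)$ is completely invariant, $f(\gamma)$ is a connected subset of $F(f)$, so $f(U)$ lies in a single Fatou component $V$ and $f(\gamma)\subseteq f(U)\subseteq V$. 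Choose $z_{0}\in Int(\Delta)\cap J(f)$; since $z_{0}\notin B(f)$ and $J(f)$ is forward invariant, $f(z_{0})\in J(f)$, hence $f(z_{0})\notin f(\gamma)$.

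Next I would apply the argument principle. For every $w\in\C\setminus f(\gamma)$, applying it to $f$ on $\Delta$ (there are no poles in $Int(\Delta)$) gives
\[
n(f\circ\gamma,\,w)=\#\{z\in Int(\Delta):f(z)=w\}\ge 0
\]
counted with multiplicity. For $w=f(z_{0})$ this number is $\ge 1$, so the winding number $n(f\circ\gamma,f(z_{0}))$ is nonzero; hence $f(z_{0})$ lies in a bounded component $\Omega_{0}$ of $\chat\setminus f(\gamma)$, i.e.\ the loop $f\circ\gamma\subseteq V$ encloses the Julia point $f(z_{0})\in\chat\setminus V$. The same identity also shows that, off $f(\gamma)$, the set $f(Int(\Delta))$ coincides with the union of the components of $\chat\setminus f(\gamma)$ on which the winding number is positive; in particular $\Omega_{0}\subseteq f(Int(\Delta))$.

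Finally I would deduce that $V$ is multiply connected. Suppose not; then $K:=\chat\setminus V$ is a nonempty connected compact set disjoint from $f(\gamma)$, hence contained in a single component of $\chat\setminus f(\gamma)$, which must be $\Omega_{0}$ because $f(z_{0})\in K\cap\Omega_{0}$. Thus $\infty\in V$ and
\[
\emptyset\neq B(f)\subseteq J(f)\subseteq K\subseteq\Omega_{0}\subseteq f(Int(\Delta)).
\]
Moreover, since $V\neq U$ we get $\gamma\subseteq U\subseteq K\subseteq\Omega_{0}$, so $\gamma$ and $f(\gamma)$ are disjoint; as $\Omega_{0}$ is a bounded simply connected domain containing the Jordan curve $\gamma$, the disk that $\gamma$ bounds inside $\Omega_{0}$ cannot be the one containing $\infty$, so in fact $\Delta\subseteq\Omega_{0}\subseteq f(Int(\Delta))$, whence $B(f)\subseteq\Omega_{0}\setminus\Delta$ is produced by $f$ on the $B(f)$–free disk $Int(\Delta)$.

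The step I expect to be the main obstacle is to contradict this "nested" configuration — in the spirit of the way Proposition \ref{seis2} excludes the symmetric possibility using $B(f)\neq\emptyset$, but now the exclusion is subtler because $B(f)$ only lands in the \emph{image} of $Int(\Delta)$, not in $Int(\Delta)$ itself. When $\infty\in B(f)$ — in particular for $f$ in any of the classes $\cale,\calp,\calm$ — the contradiction is immediate, since then $\infty\in B(f)\subseteq K$ while $\infty\in V$. In general I would fix an isolated essential singularity $e\in B(f)\subseteq\Omega_{0}\setminus\Delta$, use the Picard property on a small punctured neighbourhood $P\subseteq\Omega_{0}\setminus\Delta$ of $e$ (with $P\cap B(f)=\emptyset$) to get $f(P)\supseteq\chat\setminus\{a_{1},a_{2}\}$, and combine this with $P\subseteq\Omega_{0}\subseteq f(Int(\Delta))$, with $Int(\Delta)\cap J(f)\neq\emptyset$, and with the fact (Theorem \ref{uno}) that $U$ is multiply connected to reach a contradiction; making this last part precise is where I expect the work to lie. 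Once the nested configuration is excluded, $\chat\setminus V$ is disconnected, so $V$ is multiply connected. $\hfill\square$
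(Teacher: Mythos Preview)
Your first two paragraphs already contain the whole of the paper's argument. The paper's sketch reads: since $f$ is analytic on $\Delta$, the image $f(\Delta)$ is a compact subset of the plane; by the maximum principle $f(\operatorname{Int}\Delta)$ lands in the interior of $f(\Delta)$, so $\operatorname{Int}(f(\Delta))\cap J(f)\neq\emptyset$, and hence $V$ is multiply connected. Your use of the argument principle to obtain $n(f\circ\gamma,f(z_0))\geq 1$, which places the Julia point $f(z_0)$ in a bounded component $\Omega_0$ of $\chat\setminus f(\gamma)$ whose boundary lies in $f(\gamma)\subseteq V$, is exactly this step in different language.

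Everything from ``Finally I would deduce\dots'' onward is absent from the paper, and this is where your proposal breaks down. The paper stops once it has a compact set $f(\Delta)\subseteq\C$ with $\partial f(\Delta)\subseteq V$ and a Julia point in its interior, and reads off the conclusion directly. You instead launch a nested analysis ($K\subseteq\Omega_0$, then $\Delta\subseteq\Omega_0\subseteq f(\operatorname{Int}\Delta)$, then $B(f)\subseteq\Omega_0\setminus\Delta$), aiming for a contradiction via the Picard property at an isolated $e\in B(f)$ combined with Theorem~\ref{uno}. You explicitly flag that this last step is not carried out, and indeed nothing you have written forces a contradiction from ``$B(f)$ lies in the image of the $B(f)$--free disk $\operatorname{Int}\Delta$''; that an essential singularity is a \emph{value} of $f$ on $\operatorname{Int}\Delta$ is in no tension with $\Delta\cap B(f)=\emptyset$. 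So as written the proposal has a genuine gap: the concluding contradiction is missing, and the extra machinery you invoke (Picard, Theorem~\ref{uno}) is neither part of the paper's sketch nor visibly sufficient to close it. The short route the paper intends is to stop at the end of your second paragraph.
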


{\bf Sketch  of the proof of Proposition \ref{seis3}.} Since $f$ is analytic on $\Delta$, then $f(\Delta)$ is a compact subset of the plane, by the maximum principle the interior of $\Delta$ is mapped in the interior of $f(\Delta)$, therefore $f(\Delta)\cap J(f)\neq\emptyset$, so $V$ is multiply connected.

\begin{corollary}
Let  $f \in \calk$,  $U$ is a Fatou component  of infinity connectivity,   $B(f)$ is  finite and $f$ has finite poles.  If $V$ is the connected component in $F(f)$ containing $f(U)$, then $V$ is multiply connected.  
\label{seis4}
\end{corollary}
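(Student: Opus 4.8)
\noindent\textbf{Proof of Corollary \ref{seis4} (plan).}
The plan is to reduce the corollary to Proposition \ref{seis3}, whose analytic content can then be invoked verbatim. Put $E=B(f)\cup\{\text{poles of }f\}$; by hypothesis this is a finite set, and $B(f)\subseteq E$ while every pole of $f$ lies in $E$. Hence it suffices to produce a simple closed curve $\gamma\subset U$ such that one of the two closed Jordan regions $\Delta$ bounded by $\gamma$ satisfies $Int(\Delta)\cap J(f)\neq\emptyset$ and $\Delta\cap E=\emptyset$: the latter forces $\Delta\cap B(f)=\emptyset$ and guarantees that $\Delta$ contains no pole, so Proposition \ref{seis3} applies to $\gamma$ and $\Delta$ and yields that the component $V$ of $F(f)$ containing $f(U)$ is multiply connected, which is the assertion.

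First I would locate a good complementary component. Since $U$ has infinite connectivity, $\chat\setminus U$ has infinitely many connected components, and each such component $K$ meets $J(f)$: indeed $K$ is a nonempty proper closed subset of the connected sphere, so $\partial K\neq\emptyset$, while $\partial K\subseteq\partial U\subseteq J(f)$ because $U$ is a Fatou component. As $E$ is finite, all but finitely many of these components are disjoint from $E$; I fix one such $K$, with $K\cap E=\emptyset$. Next I would surround $K$ by a suitable curve. Because connected components coincide with quasi-components in the compact Hausdorff space $\chat\setminus U$, there is a relatively clopen subset $C$ of $\chat\setminus U$ with $K\subseteq C$ and $C\cap E=\emptyset$. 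Then $C$ and $(\chat\setminus U)\setminus C$ are disjoint compact sets, and $C$ is disjoint from the finitely many points of $E$ lying in $U$; choosing a small enough open neighbourhood $N$ of $C$ we obtain $N\cap E=\emptyset$ and $N\cap(\chat\setminus U)=C$, so $N\setminus C\subseteq U$. A standard approximation of $C$ from outside (for instance by a fine grid of squares) then yields a simple closed curve $\gamma\subset N\setminus C\subseteq U$ one of whose two closed complementary Jordan regions, call it $\Delta$, contains $K$ and lies inside $N$; since $K\cap\gamma=\emptyset$ we have $K\subseteq Int(\Delta)$, hence $\emptyset\neq\partial K\subseteq J(f)\cap Int(\Delta)$, and $\Delta\cap E\subseteq N\cap E=\emptyset$. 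These are precisely the hypotheses of Proposition \ref{seis3}.

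The step I expect to be the real obstacle is this purely planar-topology construction of $\gamma$: surrounding a single complementary component of $U$ by a Jordan curve that stays inside $U$ and simultaneously avoids the finitely many points of $E$. It is here that both hypotheses are used: infinite connectivity supplies infinitely many complementary components, so one can be chosen away from $E$, and the finiteness of $B(f)$ together with the finiteness of the pole set keeps $E$ small enough to be dodged while tracking $K$. Everything analytic (the open mapping property of $f$ on the pole-free, essential-singularity-free compact set $\Delta$, forward invariance of $J(f)$ off $B(f)$, and the resulting disconnectedness of $\chat\setminus V$) has already been carried out in Proposition \ref{seis3}, so no further analytic argument is needed.
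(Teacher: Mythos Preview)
Your reduction to Proposition~\ref{seis3} is exactly what the paper intends: the corollary is stated without proof, immediately after Proposition~\ref{seis3}, and your argument supplies precisely the missing step of manufacturing a simple closed curve $\gamma\subset U$ whose bounded side meets $J(f)$ while avoiding the finite set $B(f)\cup\{\text{poles}\}$. The planar-topology construction you outline (infinitely many complementary components, quasi-components equal components in $\chat\setminus U$, separating $K$ from $E$ by a clopen set, grid approximation) is standard and correct; you might also remark that at most one complementary component contains $\infty$, so you can choose $K\subset\C$ and hence $\Delta\subset\C$, matching the implicit use of the planar maximum principle in the sketch of Proposition~\ref{seis3}.
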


\begin{corollary} \label{hoyosvanahoyos}
 Let  $f \in (\cale \cup \calp \cup \calm)$ with a finite set of poles  and
 $U$ is  a Fatou component  of infinity connectivity. If $V$ is the connected component in $F(f)$ containing $f(U)$, then $V$ is multiply connected.
\label{seis5}
\end{corollary}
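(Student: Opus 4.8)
The plan is to derive Corollary~\ref{hoyosvanahoyos} directly from Corollary~\ref{seis4}; the whole content reduces to checking that the hypotheses of the latter are automatically satisfied for functions in $\cale\cup\calp\cup\calm$. First I would recall, as noted in Sections~\ref{section2} and~\ref{dos-2}, that $B(f)$ is finite for every $f$ in these classes: if $f\in\cale$ then $B(f)=\{\infty\}$; if $f\in\calp$ (that is, $f\in\calp_1$ or $f\in\calp_2$) then $B(f)=\{0,\infty\}$; and if $f\in\calm$ then $B(f)=\{\infty\}$. In each case $\#B(f)\le 2$, so $B(f)$ is finite, and of course $\cale\cup\calp\cup\calm\subset\calk$.

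Next, by hypothesis $f$ has only finitely many poles and $U$ is a Fatou component of infinite connectivity. Hence all the hypotheses of Corollary~\ref{seis4} are met ($f\in\calk$, $B(f)$ finite, finitely many poles, $U$ of infinite connectivity), and applying it yields that the component $V$ of $F(f)$ containing $f(U)$ is multiply connected, which is exactly the assertion. (If $f$ has no Fatou component of infinite connectivity, the statement is vacuous, so there is nothing to prove.)

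There is no genuine obstacle here: the geometric heart of the matter already lies in Proposition~\ref{seis3} and Corollary~\ref{seis4}, where one takes a simple closed curve $\gamma\subset U$ bounding a region $\Delta$ with $\Delta\cap B(f)=\emptyset$, $\Delta$ free of poles and $Int(\Delta)\cap J(f)\neq\emptyset$, and uses that $f$ is analytic on $\Delta$, so that by the maximum principle $f(Int(\Delta))\subset Int(f(\Delta))$ and hence $f(\Delta)$ still surrounds a piece of $J(f)$, forcing $V$ to be multiply connected. The only point specific to the present corollary is the trivial observation that, for $f\in\cale\cup\calp\cup\calm$, the finiteness of $B(f)$ is built into the definition of the class, so no separate verification of it is required; everything else is inherited from Corollary~\ref{seis4}.
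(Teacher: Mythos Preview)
Your proposal is correct and follows exactly the intended route: the paper states this result as an immediate corollary of Corollary~\ref{seis4} without further argument, and your verification that $B(f)$ is finite for every $f\in\cale\cup\calp\cup\calm$ is precisely what is needed. One small inaccuracy: for $f\in\calp_1$ the paper records $B(f)=\{\infty\}$ (zero is an omitted pole, not an essential singularity), not $\{0,\infty\}$; only for $f\in\calp_2$ is $B(f)=\{0,\infty\}$. This does not affect your argument, since in either case $\#B(f)\le 2$ and the hypotheses of Corollary~\ref{seis4} are met.
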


\begin{remark}
In Corollary \ref{hoyosvanahoyos} there is an example in \cite{rippon2008} where the
connectivity of $U$ is infinite but the connectivity of $V$ is finite.
\end{remark}

\section{The escaping set for functions in class $\calk$}
\label{section5}

In 1989 Eremenko in \cite{eremenko1}  defined the escaping set $I(f)$ of a
transcendental entire function and proved some important properties of $I(f)$. 
Many researchers have been studied the escaping set since then, for instance Bergweiler \cite{berg}, Dom\'{i}nguez  \cite{pat}, Osborne \cite{osborne},  Rempe \cite{rempe, rempe1, rempe2}, Rippon and Stallard \cite{rippon1, rippon2, rippon3, rippon4, rippon5}, Schleicher and  Zimmer \cite{sch} and  Sixsmith \cite{six1, six2}.  

An interesting topic is the Hausdorff dimension of the escaping set $I(f)$ investigated by McMullen in \cite{mac}. Later, Kotus and Bergweiler in \cite{KW}  showed that for a class of transcendental meromorphic functions the Hausdorff dimension of the escaping set is strictly less than 2. Other results concering the Hausdorff dimension  of Julia set  of transcendental meromorphic functions can be revised in \cite{stallard}.

In 2001, Baker, Dom\'{i}nguez and Herring \cite{bph} worked in a class of
meromorphic functions which contains the  class $\calk$. In this section we will present
some of  their results which apply to class $\calk$. 

We introduce the concept of  escaping set, folllowing \cite{bph},  for
functions in class $\calk$ as follows. Let $f\in \calk$ and $e \in B(f)$, the set

$$
I_e(f)=\{z\in \chat|  \lim_{n\rightarrow\infty}f^n(z)=e \}
$$

\noindent is {\it called the escaping set} to $e$  of $f$. 
A point $z\in I_e(f)$ is called an {\em escaping point} to $e$. The limit in the previous definition is well defined, so  $f^n(z)$ is well defined for all $n\in\N$. Thus, for all $n\in\N$, $f^n(z)\notin B(f)$. 

We refer to the following properties as the {\em  Basic Properties} of the escaping
set $I_e(f)$. We omit the proofs since they are similar to those in \cite{bph}.

\begin{theorem} 

Let $f\in \calk$ and $e\in B(f)$. Then:
\begin{enumerate}
\item $I_e(f)\neq \emptyset$,

\item $I_e(f)\cap J(f)\neq \emptyset$,

\item $\partial I_e(f)=J(f)$ and

\item $I_e(f)$ is completely invariant.
\end{enumerate}
\label{properties}
\end{theorem}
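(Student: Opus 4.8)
For the final statement (Theorem \ref{properties}, the Basic Properties of $I_e(f)$), here is how I would proceed.

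Fix $e\in B(f)$ and, after conjugating by a M\"obius transformation, assume $e$ is finite. By the Remark following Definition~\ref{Picard}, $e$ has the Picard Property, so $e\in J(f)$ and, for every $r>0$, the image of $D^{*}(e,r)\cap(\chat\setminus B(f))$ (where $D^{*}(e,r)=\{0<|z-e|<r\}$) omits at most the two local Picard exceptional values $\{a,b\}$ of $f$ at $e$, every other value being attained on a sequence of points converging to $e$. I would prove (1) and (2) simultaneously by producing an escaping point already in $J(f)$, via the nested--preimage (itinerary) argument of \cite{bph}, after Eremenko. Fix radii $r_{0}>r_{1}>\cdots\to 0$. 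Since $J(f)$ is perfect and $e\in J(f)$, each $J(f)\cap D^{*}(e,r_{n})$ is uncountable; and for $v\neq e$ close enough to $e$ one has $v\notin\{a,b\}\cup B(f)$, so $v$ has $f$--preimages arbitrarily near $e$, and these lie in $f^{-1}(J(f))\subseteq J(f)\setminus B(f)$. Hence $f\bigl(J(f)\cap D^{*}(e,s)\setminus B(f)\bigr)\supseteq\bigl(J(f)\cap D^{*}(e,t)\bigr)\setminus\{a,b\}$ for all small $s,t$, which is exactly the surjectivity needed to pull back. One then builds nonempty relatively open sets $G_{0}\supseteq G_{1}\supseteq\cdots$ in $J(f)$ with $f^{n}(G_{M})\subseteq D^{*}(e,r_{n})$ for $0\le n\le M$: at each step the surjectivity yields a nonempty set, and since $B(f^{M+1})$ is countable and closed, hence nowhere dense in the perfect set $J(f)$, it can be shrunk so that $\overline{G_{M+1}}\subseteq G_{M}$ and $\overline{G_{M+1}}\cap B(f^{M+1})=\emptyset$. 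By compactness there is $z_{0}\in\bigcap_{M}\overline{G_{M}}$; then $z_{0}\in J(f)$ and $z_{0}\notin\bigcup_{n}B(f^{n})=B^{-}(f)$, so all $f^{n}(z_{0})$ are defined and, by continuity, $|f^{n}(z_{0})-e|\le r_{n}\to 0$. Thus $z_{0}\in I_{e}(f)\cap J(f)$, giving (1) and (2).

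Part (4) is immediate from the definition: if $z\in I_{e}(f)$ then $f^{n}(f(z))=f^{n+1}(z)\to e$, so $f(z)\in I_{e}(f)$; if $f(w)\in I_{e}(f)$ then $w\notin B(f)$, every $f^{n}(w)=f^{n-1}(f(w))$ is defined, and $f^{n}(w)\to e$, so $w\in I_{e}(f)$; hence $I_{e}(f)$ is completely invariant. For (3), the inclusion $\partial I_{e}(f)\subseteq J(f)$ is a normality argument: on a Fatou component $U$ the family $\{f^{n}\}$ is normal, so if one point of $U$ has iterates tending to the constant $e$ then $f^{n}\to e$ locally uniformly on $U$; hence $I_{e}(f)\cap F(f)$ and $F(f)\setminus I_{e}(f)$ are both unions of Fatou components, so both open, so $\partial I_{e}(f)\cap F(f)=\emptyset$. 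For $J(f)\subseteq\partial I_{e}(f)$, fix $z_{0}\in J(f)$ and a neighbourhood $N$. By (1) and (4), $I_{e}(f)$ is nonempty and completely invariant, hence infinite (a finite forward--invariant set would contain an eventually constant orbit, forcing some $f^{k}(z)=e$, impossible as $e\notin\mathrm{dom}\,f$), so it contains a point $z^{*}\notin E(f)$. By the standard blow--up property of the Julia set, $\bigcup_{n}f^{n}(N\setminus B^{-}(f))$ omits at most the points of $E(f)$, so some $w\in N\setminus B^{-}(f)$ satisfies $f^{m}(w)=z^{*}$, whence $w\in I_{e}(f)$; on the other hand $N$ also contains a preimage of a repelling periodic point off $E(f)$, which is not in $I_{e}(f)$. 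Therefore $z_{0}\in\partial I_{e}(f)$, and $\partial I_{e}(f)=J(f)$.

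The delicate part is (1)--(2): making the itinerary construction rigorous, i.e. guaranteeing that the limit point $z_{0}$ stays in the domain of every iterate. This is precisely where the countability of $B^{-}(f)$ is essential — it makes each $B(f^{n})$ nowhere dense in $J(f)$, so the nested sets can be pushed off it at every stage; no such step is available without a smallness hypothesis on the singular set. I note finally that, since the pull--back step only requires preimages of values $v\neq e$ (which always have preimages accumulating at $e$), the case in which $e$ is itself a local Picard exceptional value of $f$ — for instance $e^{1/z}$ at $0$, or $e^{z}$ at $\infty$ — needs no separate treatment.
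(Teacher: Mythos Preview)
Your proof is correct and follows precisely the approach the paper intends. The paper does not give its own argument for Theorem~\ref{properties}, stating only that the proofs are similar to those in \cite{bph}; your nested--preimage (itinerary) construction for (1)--(2), exploiting the countability of each $B(f^{n})$ to keep the limit point off $B^{-}(f)$, together with the standard normality and blow--up arguments for (3) and the orbit argument for (4), is exactly that method carried out in class~$\calk$.
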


\begin{remark} (1) $I_e(f)\cap J(f)$ is completely invariant. (2) $I_e(f)$ is
  neither open nor close. (3) If $f\in \calk$ and $n\in \N$, then $I_e(f)\subseteq I_e(f^n)$.
\end{remark}

\begin{proposition}
Let $f\in \calk$ and $e\in B(f)$, then $I_e(f)\cap J(f)$ is a dense set in $J(f)$. 
\end{proposition}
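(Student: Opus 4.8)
The plan is to produce a single backward orbit that is simultaneously contained in $I_e(f)\cap J(f)$ and dense in $J(f)$; density of $I_e(f)\cap J(f)$ in $J(f)$ then follows at once. First I would fix, using part (2) of Theorem \ref{properties}, a point $w_0\in I_e(f)\cap J(f)$. Since $w_0\in I_e(f)$ all iterates $f^n(w_0)$ are defined and avoid $B(f)$ (as recorded just before Theorem \ref{properties}), so combining the forward invariance of $I_e(f)$ (Theorem \ref{properties}(4)) with the relation $f(J(f)\setminus B(f))\subseteq J(f)$ and iterating, the whole forward orbit $O^+(w_0)$ lies in $I_e(f)\cap J(f)$. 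I would then observe that $O^+(w_0)$ is infinite: were it finite, the sequence $(f^n(w_0))_n$ would be eventually periodic and hence could converge only by being eventually equal to a constant $c$, forcing $c=e$; but $c=f^N(w_0)\notin B(f)$ whereas $e\in B(f)$, a contradiction. Consequently $I_e(f)\cap J(f)$ is infinite.

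Next I would use that the set $E(f)$ of Fatou exceptional values of $f$ is finite: Fatou exceptional values are Picard exceptional, and by the Picard Property a function in $\calk$ has at most two Picard exceptional values. Since $I_e(f)\cap J(f)$ is infinite, I may pick $w\in(I_e(f)\cap J(f))\setminus E(f)$. Because $J(f)$ is backward invariant ($f^{-1}(J(f))\subseteq J(f)$) and $I_e(f)$ is completely invariant, the full inverse orbit satisfies $O^-(w)\subseteq I_e(f)\cap J(f)$.

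Finally I would invoke the density of inverse orbits of non-exceptional points, namely that for $w\in J(f)\setminus E(f)$ one has $\overline{O^-(w)}=J(f)$, which for class $\calk$ belongs to the same circle of ideas as the Basic Properties of Theorem \ref{properties} (cf. \cite{andreas,bph}). Together with $O^-(w)\subseteq I_e(f)\cap J(f)$ this yields $J(f)=\overline{O^-(w)}\subseteq\overline{I_e(f)\cap J(f)}$, i.e. $I_e(f)\cap J(f)$ is dense in $J(f)$.

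The step I expect to be the main obstacle is the last one, i.e. verifying that $\overline{O^-(w)}=J(f)$ for non-exceptional $w$ is genuinely available for $\calk$-meromorphic functions. This rests on the blow-up property of the Julia set, that for an open $V$ meeting $J(f)$ the set $\bigcup_{n\ge 0}f^n(V\setminus B^-(f))$ omits at most two points of $\chat$, which is part of Bolsch's development of the Fatou--Julia theory for class $\calk$; the only mild care needed is that $B^-(f)$ is merely countable, but since $J(f)$ is perfect, $V\setminus B^-(f)$ still contains uncountably many Julia points and normality genuinely fails there, so Montel's theorem applies.
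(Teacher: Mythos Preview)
Your proposal is correct and follows essentially the same route as the paper's proof: start from a point of $I_e(f)\cap J(f)$ furnished by Theorem~\ref{properties}, push it forward to obtain an infinite orbit inside $I_e(f)\cap J(f)$, pick a non-exceptional point $w$ from this orbit, and conclude by the density of $O^-(w)$ in $J(f)$. The paper's write-up is terser (it uses ``not a Picard exceptional value'' where you use ``not in $E(f)$'', which is equivalent here since Fatou exceptional implies Picard exceptional) and leaves the verification of the blow-up property for class~$\calk$ implicit, but the argument is the same.
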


\begin{proof}
Indeed by (1) of Theorem \ref{properties} there exist $z \in I_e(f)\cap J(f)$ since $\lim f^n (z) = e$, when  $n \to \infty $ and $f^n (z)  \neq  e$, then the cardinality of the forward orbit $O^+(z,f)$ is $\infty$. Therefore, there is  $w \in O^+(z,f) \subset J(f)$ such that $w$ is not a Picard exceptional value. Thus the backward   orbit  $O^-(w,f)$ is dense in $J(f)$.
\end{proof}

The following cases show examples for which $I_e(f)\cap F(f)\neq \emptyset$ .\\

(1) If $e\in B(f)$ is an absorbing point for an invariant Baker domain $U$, then
$U\subset I_e(f)$. So the escaping set of $f$ to a singularity $e$ may contain
points in the Fatou set,  as an example of this fact we have the function
$f(z) = e^{-z} + z  + 1$, see \cite{fatou}.\\

(2) Wandering domains may be  contained in the escaping set
for some $e\in B(f)$. The example is given by the function $f(z) = e^{-z} +
2\pi z  + 1$  in \cite{bergweiler1}.\\ 

We mention in the previous  section that functions $f \in S_K$ have neither Baker  nor   wandering domains, so we have  the following result.

\begin{theorem}
Let $f\in S_K$ and $e\in B(f)$, then $\overline{I_e(f)}=J(f)$.
\end{theorem}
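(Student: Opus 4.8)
The plan is to combine two facts: that $f \in S_{\calk}$ has no wandering domains and no Baker domains (Theorem \ref{cinco}), and that $\partial I_e(f) = J(f)$ together with $I_e(f)$ being completely invariant (Theorem \ref{properties}). Since $\overline{I_e(f)} = I_e(f) \cup \partial I_e(f) = I_e(f) \cup J(f)$, the claim $\overline{I_e(f)} = J(f)$ is equivalent to showing $I_e(f) \subseteq J(f)$, i.e. that no point of the Fatou set escapes to $e$. So the whole proof reduces to: for $f \in S_{\calk}$ and $e \in B(f)$, one has $I_e(f) \cap F(f) = \emptyset$.

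First I would take a point $z_0 \in I_e(f) \cap F(f)$, supposing one exists, and let $U$ be the Fatou component containing $z_0$. Since $f^n(z_0) \to e$, the sequence $f^n|_U$ has a subsequence converging locally uniformly on $U$ to the constant $e$; in fact, because the whole orbit of $z_0$ converges, one argues (using normality on $U$ and the fact that limit functions of $\{f^n|_U\}$ are either all non-constant automorphisms or constants, and that here at least one limit is the constant $e \in \partial U \subset J(f)$) that $f^n|_U \to e$ locally uniformly. Now the component $U$ is either periodic, pre-periodic, or wandering. It cannot be wandering by Theorem \ref{cinco}. Replacing $f$ by an iterate (legitimate since $F(f^p) = F(f)$, $J(f^p) = J(f)$, and $I_e(f) \subseteq I_e(f^p)$ — and one checks the escaping-to-$e$ property is preserved under passing to iterates for the relevant singularity), we may assume $U$ is invariant. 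Then $U$ is one of the five types in the periodic-component classification of Section \ref{section3}. In the attracting, parabolic, Siegel disc, and Herman ring cases the orbit of $z_0$ does not converge to a point of $J(f)$ (in the rotation-domain cases it does not converge at all, and in the attracting/parabolic cases it converges to the interior periodic point or to $\partial U$ but to a point where $f$ is defined, not to an essential singularity in $B(f) \subset J(f)$), contradicting $f^n(z_0) \to e \in B(f)$. The only remaining possibility is that $U$ is a Baker domain with absorbing point $e$ — but $S_{\calk}$ functions have no Baker domains, again by Theorem \ref{cinco}. Hence no such $z_0$ exists.

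The main obstacle I expect is making the case analysis genuinely airtight, in particular ruling out the Baker-domain-like behavior before invoking the no-Baker-domains theorem: one must be careful that $f^n(z_0) \to e$ with $e \in B(f)$ really does force $U$ to be a Baker domain (absorbing point in $B(f)$, hence $f^p$ not defined at $e$) rather than some degenerate parabolic situation, and that the reduction to an invariant component does not lose the "escaping to $e$" hypothesis — when passing to $f^p$ the component $U$ maps through a cycle $U_1, \dots, U_p$ and the orbit may accumulate to several singularities $e_1, \dots, e_p$ with $e = e_j$ for some $j$, so one should phrase the Baker-domain conclusion for the cycle (as in statement (i) of Subsection \ref{baker}, "at least one absorbing point is in $B(f)$"). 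Once the reduction and the Baker-domain identification are clean, the contradiction with Theorem \ref{cinco} closes the argument, and then $\overline{I_e(f)} = I_e(f) \cup J(f) = J(f)$ since $I_e(f) \subseteq J(f)$ and $J(f)$ is closed.
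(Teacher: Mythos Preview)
Your proposal is correct and follows exactly the approach the paper indicates: the paper states the theorem immediately after remarking that functions in $S_{\calk}$ have neither Baker nor wandering domains (Theorem \ref{cinco}), leaving the routine deduction to the reader. Your argument supplies precisely that deduction---using $\partial I_e(f)=J(f)$ from Theorem \ref{properties} to reduce to $I_e(f)\cap F(f)=\emptyset$, and then the classification of periodic components together with Theorem \ref{cinco} to exclude every possibility---so it matches the paper's intended reasoning, only made explicit.
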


\subsection{The general escaping set}



In order to get a better description on the dynamics of functions in class 
$\calk$ we introduce the general escaping set.

\begin{definition} \label{omegadef}
Let $f \in \calk$. The omega limit set of $z_0$ under $f$, denoted  by  $\omega (z_0,f)$, is defined as the set of  $z \in \chat$ such that there exist a subsequence  $\{f^{n_k}(z_0)\}$ contained in the forward  orbit $O^{+}(z_0, f)$ such that $lim f^{n_k}(z_0) = z$ as $k \to \infty$.
\end{definition}

\begin{remark}
If $\omega (z,f)$ is defined for a point $z$, then $z\notin B^-(f)$.
\end{remark}

Let $f\in \calk$, we define the {\it general escaping set} of $f$ as follows:

$$
I_g(f)=\{z\in\chat | \omega (z,f)\subseteq B^-(f)\}.
$$

Now we shall verify some basic properties of $I_g(f)$. 

\begin{theorem}
Let $f\in \calk$. Then

\begin{enumerate}
\item $I_g(f)\neq \emptyset$,

\item $I_g(f)\cap J(f)\neq \emptyset$,

\item $\partial I_g(f)=J(f)$ and 

\item $I_g(f)$ is completely invariant.

\end{enumerate}
\end{theorem}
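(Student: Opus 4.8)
The plan is to piggyback on the already-established Basic Properties of the single-singularity escaping sets $I_e(f)$ (Theorem \ref{properties}) by first observing the elementary inclusion $I_e(f) \subseteq I_g(f)$ for every $e \in B(f)$. Indeed, if $z \in I_e(f)$ then $f^n(z) \to e$, so $\omega(z,f) = \{e\} \subseteq B(f) \subseteq B^-(f)$, hence $z \in I_g(f)$. From this, parts (1) and (2) are immediate: pick any $e \in B(f)$ (which exists since $\#B(f) \geq 1$), and then $\emptyset \neq I_e(f) \subseteq I_g(f)$ gives (1), while $\emptyset \neq I_e(f) \cap J(f) \subseteq I_g(f) \cap J(f)$ gives (2).

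For part (4), complete invariance, I would argue directly from the definition of $\omega(z,f)$ together with continuity of $f$ on $\chat \setminus B(f)$ and the fact that $B^-(f) = \bigcup_{j \geq 0} f^{-j}(B(f))$ is both forward and backward invariant in the appropriate sense (forward: $f(B^-(f) \setminus B(f)) \subseteq B^-(f)$; backward: $f^{-1}(B^-(f)) \subseteq B^-(f)$, since $f^{-1}(f^{-j}(B(f))) = f^{-(j+1)}(B(f))$). The key point is that $\omega(f(z),f)$ and $\omega(z,f)$ differ by at most the relation $f(\omega(z,f)) \subseteq \omega(f(z),f) \subseteq \omega(z,f) \cup \{\text{finitely many transient points}\}$, but in fact for omega-limit sets of a continuous map one has $\omega(z,f) = \omega(f(z),f)$ whenever both are defined (the orbit tails coincide). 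So $z \in I_g(f) \iff \omega(z,f) \subseteq B^-(f) \iff \omega(f(z),f) \subseteq B^-(f) \iff f(z) \in I_g(f)$, giving forward invariance, and the same equivalence run backwards (using that if $f(w) = z$ then $\omega(w,f) = \omega(z,f)$) gives $f^{-1}(I_g(f)) \subseteq I_g(f)$. Here one must be slightly careful: $\omega(z,f)$ is only defined when $z \notin B^-(f)$ (per the Remark), but points of $B^-(f)$ do not lie in $I_g(f)$ and their forward/backward images stay inside $B^-(f)$, so they cause no trouble.

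For part (3), $\partial I_g(f) = J(f)$, I would split into the two inclusions. For $J(f) \subseteq \partial I_g(f)$: since $I_e(f) \subseteq I_g(f)$, we have $\overline{I_e(f)} \subseteq \overline{I_g(f)}$, and by Theorem \ref{properties}(3), $J(f) = \partial I_e(f) \subseteq \overline{I_e(f)} \subseteq \overline{I_g(f)}$; combined with the fact that $I_g(f)$ contains no interior points of $J(f)$ other than through Fatou components — more precisely, $J(f) \cap I_g(f)$ has empty interior in $\chat$ because $J(f)$ is perfect and nowhere dense unless $J(f) = \chat$ — one deduces $J(f) \subseteq \partial I_g(f)$. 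For the reverse $\partial I_g(f) \subseteq J(f)$, equivalently $F(f) \cap \partial I_g(f) = \emptyset$: I claim $F(f) \cap I_g(f)$ is open in $F(f)$, so it contributes no boundary points inside $F(f)$; this follows because a point $z$ of a Fatou component $U$ lies in $I_g(f)$ essentially when the whole component does (limit functions on $U$ are constant or locally uniform, and $\omega(\cdot,f)$ is constant on $U$), so $F(f) \cap I_g(f)$ is a union of Fatou components, hence open. The main obstacle here — and the step I would expect to require the most care — is justifying that $\omega(z,f) \subseteq B^-(f)$ is either true for all points of a Fatou component or false for all of them; this needs the normal-family structure and the classification of Fatou components (a Baker domain's absorbing point lies in $B^-(f)$, a wandering domain's limit functions are constants in $\overline{SV^+(f)}$, etc.), but since those classification results are available from Section \ref{section3}, the argument goes through. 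I would model the whole proof closely on the proof of Theorem \ref{properties} as done in \cite{bph}, remarking that only the inclusion $I_e(f) \subseteq I_g(f)$ and the openness of $F(f) \cap I_g(f)$ are genuinely new ingredients.
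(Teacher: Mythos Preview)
Your treatment of (1), (2), and (4) coincides with the paper's: deduce (1) and (2) from the inclusion $I_e(f)\subseteq I_g(f)$ together with Theorem~\ref{properties}, and obtain (4) from the identity $\omega(z,f)=\omega(f(z),f)=\omega(w,f)$ for any $w\in f^{-1}(z)$.

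For (3), however, your argument for the inclusion $J(f)\subseteq\partial I_g(f)$ has a genuine gap. You correctly obtain $J(f)\subseteq\overline{I_g(f)}$ via $I_e(f)\subseteq I_g(f)$, but you still need every point of $J(f)$ to lie in $\overline{\chat\setminus I_g(f)}$ as well. Your claim that ``$J(f)\cap I_g(f)$ has empty interior in $\chat$ because $J(f)$ is nowhere dense'' does not deliver this: the statement $\mathrm{int}\bigl(J(f)\cap I_g(f)\bigr)=\emptyset$ is \emph{not} the same as $J(f)\cap\mathrm{int}\bigl(I_g(f)\bigr)=\emptyset$ (a nowhere-dense $J(f)$ could in principle sit inside an open subset of $I_g(f)$), and the case $J(f)=\chat$ is simply left untreated. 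The paper closes this gap in one line by observing that repelling periodic points are dense in $J(f)$ and never belong to $I_g(f)$, since the $\omega$-limit of a periodic point is its own cycle, which lies in $\chat\setminus B^-(f)$; hence the complement of $I_g(f)$ accumulates on every point of $J(f)$.

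For the reverse inclusion $\partial I_g(f)\subseteq J(f)$ your route --- showing that $F(f)\cap I_g(f)$ is a union of entire Fatou components via the classification of periodic components and the constant-limit-function theorem for wandering domains --- is correct and in fact more explicit than the paper, which disposes of this direction with the single phrase ``points in the interior of $I_g(f)$ are in the Fatou set''. So here you are doing more work than the paper records, but the extra care is not misplaced.
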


\begin{proof}
For any $e\in B(f)$ we have $I_e(f)\subseteq I_g(f)$, so $I_g(f)\neq \emptyset$ and $I_g(f)\cap J(f)\neq\emptyset$. Now, points in $I_e(f)$ and repelling periodic points accumulate at any point in $J(f)$, therefore $\partial I_g(f)\supseteq J(f)$; points in the interior of $I_g(f)$ are in the Fatou set, so $\partial I_g(f)\subseteq J(f)$. Finally, $\omega (z,f)=\omega (f(z),f)$ and for any $w\in f^{-1}(z)$, $\omega (w,f)=\omega (z,f)$, thus $I_g(f)$ is completely invariant.
\end{proof}

\begin{proposition} \label{Iginvariante}
If $f\in \calk$ and $n\in \N$, then $I_g(f)\subseteq I_g(f^n)$.
\end{proposition}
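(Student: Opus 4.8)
The plan is to deduce the inclusion from two elementary facts: that passing to an iterate can only shrink the $\omega$-limit set, and that passing to an iterate does not shrink the set of pre-singularities $B^-$.

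First I would observe that, whenever the orbits in question are defined, $\omega(z,f^n)\subseteq\omega(z,f)$. Indeed, the forward orbit of $z$ under $f^n$ is exactly the subsequence $\{f^{nk}(z)\}_{k\in\N}$ of the forward orbit of $z$ under $f$; hence every subsequential limit of $\{(f^n)^k(z)\}_k$ is also a subsequential limit of $\{f^m(z)\}_m$, which is the claim. One also checks that if $z\in I_g(f)$ then $\omega(z,f)$ is defined, so $z\notin B^-(f)$; since $z\notin B^-(f)$ forces $f(z)\notin B^-(f)$ (otherwise $f^{k+1}(z)\in B(f)$ for some $k\ge 0$, so $z\in f^{-(k+1)}(B(f))\subseteq B^-(f)$), the set $\chat\setminus B^-(f)$ is forward invariant, all the iterates $f^m(z)$ exist, and therefore so do all $(f^n)^k(z)=f^{nk}(z)$; in particular $\omega(z,f^n)$ makes sense.

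Next I would show $B^-(f)\subseteq B^-(f^n)$. Using the description $B(f^n)=\bigcup_{j=0}^{n-1}f^{-j}(B(f))$ from \cite{bolsch} and \cite{herr} (so in particular $B(f)\subseteq B(f^n)$), write a given integer $m\ge 0$ as $m=nq+r$ with $0\le r<n$. Then, invoking the compatibility of preimages with composition given by Bolsch's result \cite{andreas}, $f^{-m}(B(f))=f^{-nq}\bigl(f^{-r}(B(f))\bigr)\subseteq f^{-nq}(B(f^n))=(f^n)^{-q}(B(f^n))\subseteq B^-(f^n)$. Taking the union over all $m\ge 0$ gives $B^-(f)=\bigcup_{m\ge 0}f^{-m}(B(f))\subseteq B^-(f^n)$. (In fact $B^-(f^n)=B^-(f)$, since $nq+r$ sweeps out all of $\N\cup\{0\}$, but only this inclusion is needed.)

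Finally I would assemble the argument: let $z\in I_g(f)$, so $\omega(z,f)\subseteq B^-(f)$. By the first step $\omega(z,f^n)\subseteq\omega(z,f)$, and by the second step $B^-(f)\subseteq B^-(f^n)$, whence $\omega(z,f^n)\subseteq B^-(f^n)$, that is $z\in I_g(f^n)$. The reasoning is routine throughout; the only point requiring a little care is the bookkeeping with $B^-(f^n)$, where one must use the explicit formula for $B(f^n)$ together with the behaviour of preimages under composition, and I do not anticipate any genuine obstacle.
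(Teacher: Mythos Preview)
Your proof is correct and follows essentially the same route as the paper's: the paper's argument is the one-line observation that $\omega(z,f^n)\subseteq\omega(z,f)$ together with $B^-(f)=B^-(f^n)$, and you have simply supplied the justifications for both facts (and checked well-definedness of $\omega(z,f^n)$) that the paper leaves implicit.
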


\begin{proof} 
If $z\in I_g(f)$, then $\omega (z,f)\subseteq B^-(f)$. Since $\omega (z,f^n)\subseteq \omega (z,f)$ and $B^-(f)=B^-(f^n)$, then $z\in I_g(f^n)$.
\end{proof}

\begin{question}
When $I_g(f)= I_g(f^n)$?
\end{question}

For any periodic cycle of Baker domains $\{U_1, U_2, \dots U_n\}$
with absorbing points $z_1, z_2, \dots z_n$, so for all $z \in U_i$, $1 \leq i \leq n$, the omega limit $\omega (z,f) = \{z_1, z_2, \dots z_n \}\subseteq B^-(f)$ and therefore we have the following result.

\begin{proposition}
If $f\in \calk$ and $U$ is a periodic Baker domain, then $U \subset
I_g(f)$.
\end{proposition}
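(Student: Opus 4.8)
The goal is to show that if $f\in\calk$ and $U$ is a periodic Baker domain, then $U\subseteq I_g(f)$. The key observation is that the general escaping set is defined purely in terms of $\omega$-limit sets landing inside $B^-(f)$, so it suffices to control the $\omega$-limit set of an arbitrary point of $U$. First I would reduce to the case of a cycle of Baker domains: since $U$ is a periodic Baker domain of period $p$, there is a cycle $\{U_1,\dots,U_p\}$ with $U=U_i$ for some $i$, where $U_j$ is a Baker domain for each $j$ (this is exactly the terminology fixed in Subsection \ref{baker}). Let $z_1,\dots,z_p$ denote the absorbing points of $U_1,\dots,U_p$ respectively.

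The next step is to identify $\omega(z,f)$ for $z\in U_i$. By the definition of a Baker domain (item 5 of the classification of periodic components), for $z\in U_j$ we have $f^{np}(z)\to z_j$ as $n\to\infty$, while $f(z_j)$ is not defined. Running the iterates along the cycle, the forward orbit of $z\in U_i$ is eventually distributed among $U_i,U_{i+1},\dots,U_p,U_1,\dots$, and the subsequence $f^{np}(z)$ converges to $z_i$, the subsequence $f^{np+1}(z)$ converges to $z_{i+1}$, and so on cyclically. Hence every convergent subsequence of $O^+(z,f)$ converges to one of $z_1,\dots,z_p$, and each $z_j$ is actually attained as such a limit. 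Therefore $\omega(z,f)=\{z_1,z_2,\dots,z_p\}$. This is precisely the computation indicated in the paragraph immediately preceding the statement.

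It then remains to show $\{z_1,\dots,z_p\}\subseteq B^-(f)$. For this I invoke statement (i) of the discussion of Baker domains in Subsection \ref{baker}: for any cycle of Baker domains each absorbing point lies in $B^-(f)$, and in fact at least one lies in $B(f)$ — but more simply, the stated fact there is ``$z_0$ is in $B^-(f)$'' for any absorbing point $z_0$ of a Baker domain $U$. Applying this to each $U_j$ gives $z_j\in B^-(f)$ for all $j$. Combining, $\omega(z,f)=\{z_1,\dots,z_p\}\subseteq B^-(f)$, which by definition of $I_g(f)$ says $z\in I_g(f)$. Since $z\in U_i=U$ was arbitrary, $U\subseteq I_g(f)$.

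\textbf{Main obstacle.} The only delicate point is justifying rigorously that $\omega(z,f)$ contains nothing beyond $\{z_1,\dots,z_p\}$ — that is, that no subsequence of the forward orbit escapes to some other accumulation point on $\partial U_j$ or elsewhere in $\chat$. This follows from the convergence $f^{np+r}(z)\to z_{i+r}$ (indices mod $p$) built into the definition of a Baker domain together with the fact that any subsequence of $O^+(z,f)$ splits into finitely many sub-subsequences according to the residue of the index modulo $p$, each of which converges; there is no room for a stray limit. Everything else is a direct appeal to definitions and to the already-established properties of Baker domains, so the proof is short.
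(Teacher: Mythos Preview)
Your proof is correct and follows exactly the paper's approach: the paragraph immediately preceding the proposition already records that for any $z$ in a member of a periodic cycle of Baker domains one has $\omega(z,f)=\{z_1,\dots,z_n\}\subseteq B^-(f)$, and the proposition is stated as a direct consequence. Your write-up is simply a more detailed unpacking of that same argument, including the (correct) justification that no stray accumulation points can appear in $\omega(z,f)$.
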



\begin{proposition}
If $f\in \calk$, then

\[
\bigcup_{e\in B(f)}I_e(f)\subseteq I_g(f).
\]
\end{proposition}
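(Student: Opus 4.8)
The plan is to unwind the two definitions and observe that convergence of the full forward orbit to a single point $e$ is strong enough to pin down the omega limit set completely. First I would fix an arbitrary $z\in\bigcup_{e\in B(f)}I_e(f)$ and choose $e\in B(f)$ with $z\in I_e(f)$, so that $\lim_{n\to\infty}f^n(z)=e$. As noted immediately after the definition of $I_e(f)$, the existence of this limit forces $f^n(z)$ to be defined for every $n\in\N$, and in particular $f^n(z)\notin B(f)$ for all $n$; hence the forward orbit $O^+(z,f)$ is a genuine sequence in $\chat$ and $\omega(z,f)$ is meaningful in the sense of Definition \ref{omegadef}.

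Next I would compute $\omega(z,f)$ directly. By definition, any $w\in\omega(z,f)$ is the limit of some subsequence $\{f^{n_k}(z)\}$ of $\{f^n(z)\}$; but every subsequence of a convergent sequence has the same limit as the sequence, so $w=e$. Therefore $\omega(z,f)=\{e\}$. Since $B(f)\subseteq B^-(f)=\bigcup_{j\ge 0}f^{-j}(B(f))$ (take the $j=0$ term), we conclude $\omega(z,f)=\{e\}\subseteq B^-(f)$, which is exactly the condition defining membership in $I_g(f)$. As $z$ was arbitrary, this gives $\bigcup_{e\in B(f)}I_e(f)\subseteq I_g(f)$.

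There is no genuine obstacle here: the inclusion is essentially a bookkeeping consequence of the definitions, and the only point worth stating carefully is that $\omega(z,f)$ is legitimately defined on each $I_e(f)$, which is guaranteed by the well-definedness of the orbit recorded when $I_e(f)$ was introduced. It is worth remarking in passing that the inclusion is in general strict: $I_g(f)$ additionally records, for instance, the periodic Baker-domain behaviour of the preceding propositions, whereas a point of $\bigcup_{e\in B(f)}I_e(f)$ must have its entire orbit converging to one singularity.
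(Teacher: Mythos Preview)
Your proof is correct and follows exactly the same approach as the paper's one-line argument: if $z\in I_e(f)$ then $\omega(z,f)=\{e\}\subseteq B^-(f)$, hence $z\in I_g(f)$. You have merely written out the elementary justifications (well-definedness of the orbit, uniqueness of subsequential limits, $B(f)\subseteq B^-(f)$) that the paper leaves implicit.
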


Indeed if $z\in I_e(f)$ for some $e\in B(f)$, then $\omega (z,f)=\{e\}$, thus $z\in I_g(f)$. 

For functions in class $\cale$, we have that $I_g(f)=I_{\infty}(f)$. For functions in class $\calm$ the usual escaping set concept is what we have called $I_{\infty}$. In contrast, the function $f(z)=-e^z+1/z\in\calm$ holds that $I_g(f)\neq I_{\infty}(f)$. Since $f$ has a cycle of period two of Baker domains, say $F_{\infty}$ and $F_0$, with $\infty$ and zero as the absorbing points respectively, then the Baker domains $F_{\infty}$ and $F_0$ are contained in $I_g(f)$ but not in $I_{\infty}(f)$, in fact $F_{\infty}\subset I_{\infty}(f^{2n})$ and $F_{\infty}\cap I_{\infty}(f^{2n+1})=\emptyset$, while $F_{\infty}\subset I_g(f^n)$ for all $n\in\N$.\\

\begin{proposition}
If $f\in \calk$ and $f$ has a cycle of Baker domains of period greater than one, with at least two different absorbing points for two Baker domains in this cycle, then

\[
\bigcup_{e\in B(f)}I_e(f)\varsubsetneq I_g(f).
\]
\end{proposition}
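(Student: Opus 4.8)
The plan is to establish the claimed strict inclusion $\bigcup_{e\in B(f)}I_e(f)\varsubsetneq I_g(f)$ in two steps: first the (already known) inclusion $\subseteq$, then strictness by exhibiting a point of $I_g(f)$ that lies in none of the $I_e(f)$. The inclusion $\subseteq$ is exactly the previous proposition, so nothing new is needed there; it remains only to verify that under the stated hypothesis the inclusion is proper. For this I would use the cycle of Baker domains itself. Let $\{U_1,\dots,U_p\}$ be the hypothesized cycle, with $p>1$, with absorbing points $z_1,\dots,z_p$, and suppose, after relabelling, that $z_1\neq z_2$. Pick any point $w\in U_1$. By the proposition on periodic Baker domains, $U_1\subset I_g(f)$, so $w\in I_g(f)$; the job is to show $w\notin I_e(f)$ for every $e\in B(f)$.

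The key observation is that for $w\in U_1$ the orbit $f^{n}(w)$ visits each $U_i$ cyclically and $f^{np+i-1}(w)\to z_i$ as $n\to\infty$, so the omega-limit set is $\omega(w,f)=\{z_1,\dots,z_p\}$, which contains at least the two distinct points $z_1$ and $z_2$. If we had $w\in I_e(f)$ for some $e\in B(f)$, then by the definition of $I_e(f)$ we would have $f^{n}(w)\to e$, hence $\omega(w,f)=\{e\}$ would be a singleton, contradicting $z_1\neq z_2$. Therefore $w$ lies in $I_g(f)$ but in no $I_e(f)$, which gives the strict inclusion. I would also note explicitly (invoking statement (i) in the list of properties of absorbing points in Subsection on Baker domains) that at least one $z_i$ does lie in $B(f)$, so that the union on the left is genuinely "trying" to capture part of this cycle — this makes the strictness meaningful rather than vacuous, though it is not logically required for the argument.

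The only mild subtlety — and the step I would be most careful about — is justifying that $\omega(w,f)$ is exactly $\{z_1,\dots,z_p\}$ and in particular that it is finite with the claimed elements. One must make sure the orbit does not also accumulate at boundary points of the $U_i$ other than the absorbing points; this follows from the defining property of a Baker domain cycle, namely $f^{np}(z)\to z_i$ locally uniformly on $U_i$ for $z$ in the cycle, so every convergent subsequence of $(f^n(w))$ converges to one of the $z_i$, and each $z_i$ is attained along the subsequence $n\equiv i-1\pmod p$. Hence $\omega(w,f)=\{z_1,\dots,z_p\}$ precisely. Once this is in hand, the contradiction with $w\in I_e(f)$ is immediate, and the proof is complete. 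I would write this up as a short paragraph of the form "Indeed, by the previous proposition the inclusion holds; for strictness, take $w\in U_1$ where $\{U_1,\dots,U_p\}$ is the given cycle, \dots, so $\omega(w,f)\supseteq\{z_1,z_2\}$ has at least two elements, whereas $w\in I_e(f)$ would force $\omega(w,f)=\{e\}$, a contradiction."
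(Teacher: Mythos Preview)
Your proposal is correct and follows exactly the approach implicit in the paper. The paper does not give an explicit proof of this proposition, but immediately before it the paper records that for any periodic cycle of Baker domains $\{U_1,\dots,U_n\}$ with absorbing points $z_1,\dots,z_n$ one has $\omega(z,f)=\{z_1,\dots,z_n\}$ for every $z\in U_i$; combined with the preceding proposition that $z\in I_e(f)$ forces $\omega(z,f)=\{e\}$, this is precisely your argument, and your careful justification of $\omega(w,f)=\{z_1,\dots,z_p\}$ fills in the one detail the paper states without proof.
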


\begin{question} \label{pregoscilantes}
Is it true that if $f\in \calk$ and $\#B(f)>1$, then $\bigcup_{e\in B(f)}I_e(f)\varsubsetneq I_g(f)$?
\end{question}

As a consequence of some results in \cite{marti}, we know that for any function $f$ in $P_2$, there is a point $z\in I_g(f)$ such that $\omega (z,f)=\{0,\infty\}=B(f)$, therefore in this case the answer to Question \ref{pregoscilantes} is yes.


In a seminar, Nuria Fagella asked  the following question.

\begin{question}
If $z \in I_g(f)$ is it true that $\omega (f, z) \subset B(f)$? 
\end{question}

The answer is no because of the example $f(z)=-e^z + 1/z$ mentioned above. In this case all the
points in the Baker domains are oscillating between a singularity and a non
singularity point.

\begin{question} \label{pregpresingularidades}
For all $f \in \calk$ with at least one non
omitted singularity in $B(f)$, is it true  that $I_g(f)\neq\{ z \in I_g(f)|\omega (z, f) \subset B(f)\}$?
\end{question}

\begin{definition}
Let $f\in \calk$ and $E\subset B^-(f)$ such that the accumulation points of $E$ do not belong to $E$. For $I$ a countable set of indexes and let $E=\bigcup_{k\in I}\{e_k\}$. For all $e_k\in E$ choose an open neighbourhood $U_k$ of $e_k$ with the property that $U_k\cap U_j=\emptyset$ if $k\neq j$,  $s\in E^{\N}$. 

We say that  $s=\{s_1,...,s_n,...\}$ is \textit{eventually an itinerary} of $z\in I_g(f)$ if:

(1) $\omega(z,f)=\overline{E}$ and, 

(2) there exist $N,M\geq 0$ such that for all $n\in\N$, $f^{N+n}(z)\in U_k$ if and only if $s_{M+n}=e_k$.

In this case we say that $z$ escapes to $E$ with eventual itinerary $s$ and we say that $\{U_k\}_{k\in I}$ is a separating open cover of $E$.
\end{definition}

In the previous definition we choose $E$ to be finite or not. Observe that, by property (2), $s$ is eventually an itinerary of $z$ independently on how we choose the separating open cover of $E$. However if we change the separating open cover, the values $N$ and $M$ may change, so by the previous definition it is not hard to prove the following result.

\begin{proposition}
Let $f\in \calk$, $z\in I_g(f)$ and $s\in E^{\N}$ an eventual itinerary of $z$. $t\in E^{\N}$ is eventually an itinerary of $z$ if and only if the orbits of $s$ and $t$ under the shift map intersect.
\end{proposition}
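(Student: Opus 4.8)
The plan is to show that both implications collapse to one observation: an eventual itinerary of $z$ is nothing but the sequence of indices $k$ for which $f^m(z)\in U_k$, read from some time on and relabelled by a shift. First I would fix one separating open cover $\{U_k\}_{k\in I}$ of $E$; by the remark preceding the proposition the property of being an eventual itinerary does not depend on this choice, so nothing is lost. Since $z$ already possesses the eventual itinerary $s$, property (2) in the definition, together with the disjointness of the $U_k$ and the fact that for each $m$ the term $s_{M_s+m}$ equals exactly one $e_k$, forces $f^m(z)$ to lie in exactly one $U_k$ for all large $m$; call that index $\kappa(m)$. Property (2) then says precisely that $\sigma^{M_s}(s)=(e_{\kappa(m)})_{m>N_s}$, where $\sigma$ denotes the shift map.

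For the forward implication, assume $t$ is also an eventual itinerary of $z$, with constants $N_t,M_t$. Replacing $(N_s,M_s)$ by $(N_s+c,M_s+c)$ and $(N_t,M_t)$ by $(N_t+c',M_t+c')$ for suitable $c,c'\geq 0$ — an operation that clearly preserves property (2) — I may assume $N_s=N_t=:N$ is large enough that $\kappa$ is defined on $\{m:m>N\}$. Then for every $n\in\N$ one has $s_{M_s+n}=e_{\kappa(N+n)}=t_{M_t+n}$, hence $\sigma^{M_s}(s)=\sigma^{M_t}(t)$, so the orbits of $s$ and $t$ under $\sigma$ intersect.

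For the converse, suppose $\sigma^{i}(s)=\sigma^{j}(t)$ for some $i,j\geq 0$, i.e.\ $s_{i+m}=t_{j+m}$ for all $m\in\N$. Condition (1) for $t$, namely $\omega(z,f)=\overline{E}$, is already known because $s$ is an eventual itinerary of $z$. For condition (2) I start from $f^{N_s+n}(z)\in U_k\iff s_{M_s+n}=e_k$ ($n\in\N$) and substitute $s_{M_s+n}=t_{M_s+n-i+j}$, which is valid whenever $M_s+n-i\geq 1$. A reindexing then supplies the required constants: if $i\leq M_s$ one may take $N'=N_s$ and $M'=M_s-i+j$; if $i>M_s$ one writes $n=(i-M_s)+n'$ and obtains $N'=N_s+i-M_s$ and $M'=j$. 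In both cases $N',M'\geq 0$ and, after relabelling if necessary, $f^{N'+n}(z)\in U_k\iff t_{M'+n}=e_k$ for all $n\in\N$, so $t$ is an eventual itinerary of $z$.

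The only delicate point is the index bookkeeping in the last paragraph: one must split into the cases $i\leq M_s$ and $i>M_s$ precisely so that the substitution $s_{M_s+n}=t_{M_s+n-i+j}$ is invoked only in its range of validity and so that the new constants $N'$ and $M'$ remain non-negative. Everything else follows immediately from the definitions and from the cover-independence noted before the statement.
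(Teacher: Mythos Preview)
Your argument is correct. The paper does not actually supply a proof of this proposition: it simply remarks that ``by the previous definition it is not hard to prove'' the result. Your sketch fills in exactly the computation the authors leave implicit, namely that an eventual itinerary is, up to a shift, the sequence $(e_{\kappa(m)})$ recording which $U_k$ contains $f^m(z)$ for large $m$, and that two such sequences therefore agree after suitable shifts. The case split $i\le M_s$ versus $i>M_s$ in the converse is handled correctly and is the only place where any care is needed.
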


\begin{definition} \label{clasificacionerrantes}
Let $z \in I_g(f)$.

\begin{enumerate}

\item $z$ is an {\it escaping oscillating} point of $f$ if 
the cardinality of its omega limit set is greater than one, this is  
 $\# \omega(z, f) > 1$,

\item $z$ is an {\it escaping periodic} point of $f$ if it has a periodic eventual itinerary,

\item $z$ is an {\it escaping wandering} point of $f$ if its omega limit set is an
infinite set.

\end{enumerate}
\end{definition}

If $z\in I_e(f)$ for some $e\in B(f)$, then $z$ is an escaping periodic point of $f$ of period 1. A point $z$ in a Baker domain $U$ is an escaping periodic point, where the period is $\# \omega (z,f)$ which is not always the period of $U$. For instance, all points in Baker domains of any entire function are escaping periodic points of period 1. In \cite{marti} is shown that all functions in $P_2$ have escaping oscillating points, some of them are periodic and others are non-periodic.

Using Definition \ref{clasificacionerrantes} we can reformulate the Questions \ref{pregoscilantes} and \ref{pregpresingularidades} as follows:

\begin{enumerate}
\item If $\# B(f)>1$, is it true that $f$ has escaping oscillating points?

\item Let $f\in \calk$ and $e\in B(f)$ such that is not an omitted value of $f$. Is there an escaping oscillating point $z\in I_g(f)$ such that $\omega (z,f)\nsubseteq B(f)$?
\end{enumerate}

\begin{question} Is there an example of a function in class $\calk$ with
  escaping wandering points?
\end{question}

\subsection{Escaping hairs}

For some transcendental meromorphic functions the escaping points can be connected to infinity with a curve $\gamma$  contained in $I_{\infty}(f)$, see for instance \cite{devaney}, \cite{rippon5} and \cite{RR}.

\begin{definition}
Let $f\in K$ and $e\in B(f)$. Suppose that there is a simple curve $\gamma\subset I_e(f)$ such that $e$ is an endpoint of $\gamma$, we called $\gamma$ a escaping hair to $e$ and $e$ is called a singular end point of $\gamma$.
\end{definition}

In the example $f(z)=-e^z+1/z$, the points in the Baker domain $F_0$ can be connected with a curve $\gamma$ to zero, which is in $B^-(f)-B(f)$ (not a singularity of $f$), $\gamma$ is contained in $I_g(f)\cap F_0$. With this in mind we state the following definition.

\begin{definition}
Let $f\in \calk$. Suppose that there exist $E\subset B^-(f)$, $\{U_k\}$ is a separating open cover of $E$, $s=\{s_1,...,s_n,...\}\in E^{\N}$ and a simple curve $\gamma\subset I_g(f)$ with end point $p\in B^-(f)$ such that:

\begin{enumerate}
\item for all $z\in \gamma$, $z$ escapes to $E$, 

\item exist $N,M\geq 0$ such that for all $n\in\N$, $f^{N+n}(\gamma)\subset U_k$ if and only if $s_{M+n}=e_k$.
\end{enumerate}

We say that $\gamma$ is an escaping hair to $E$ with eventual itinerary $s$
and a singular end point $p$.

\end{definition}

Observe that in the conditions of the previous definition, for all $z\in \gamma$, $s$ is an eventual itinerary of $z$.

\begin{definition}
Let $\gamma\subset I_g(f)$ an escaping hair to $E$ with eventual itinerary $s$ and singular end point $p_0$. We say that the set $O^*(p_0,\gamma,f)=\{p_0,p_1,...,p_n,...\}$ is a singular orbit of $p_0$, if $p_n\in B^-(f)$ is a singular end point of $f^n(\gamma)$ for all $n\in\N$.
\end{definition}

\begin{definition} \label{dinsingendpoint}
Let $O^*(p,\gamma,f)$ be a singular orbit of an endpoint $p$ of an escaping hair $\gamma$ to $E$, $p$ can be classified as follows:
\begin{enumerate}

\item periodic singular end point if a singular orbit is periodic,

\item pre-periodic singular end point if a singular orbit is pre-periodic,

\item oscillating singular end point if $\# E>1$ and,

\item wandering singular end point if its singular orbit is an infinite set.
\end{enumerate}
\end{definition}

Observe that (i) by definition every pre-periodic  singular end point is periodic and (ii) by  choosing different escaping hairs with a singular end point $p$, it  may have different classification.

\begin{question}
Is there an example of a function $f\in \calk$ and $p\in B^-(f)$ such that $p$ has singular orbits for each of the cases in the previous definition?
\end{question}



We are defining in some way the dynamics of a singular end point $p$ using escaping hairs, even when $f^n(p)$ is not defined for all $n\in\N$. In Section \ref{section6} we give an explicit example of a function in class $\calk$ with a wandering singular end point.\\

In the example $f(z)=-e^z+1/z$, there is $x\in \mathbb{R}^-$ such
that $\gamma=(-\infty,x]$ is an escaping hair to $E=\{\infty,0\}$ with singular
end point $\infty$ which is periodic, with singular orbit $O^*(\infty,\gamma,f)=\{\infty,0,\infty,0,...\}$. While $f(\gamma)$ is an escaping hair
with zero as a periodic singular end point and singular orbit $O^*(0,f(\gamma),f)=\{0,\infty,0,\infty,...\}$, see Figure \ref{marco}. Due to a result in \cite{RR}, $I_{\infty}(f)$ contains a escaping hair $\gamma$ with singular end point $\infty$, so its singular obit $O^*(\infty,\gamma,f)=\{\infty,\infty,\infty,...\}$. Therefore, in this case, $p=\infty$ has different dynamics according to Definition \ref{dinsingendpoint}.

\begin{figure}[h] 
\centerline{\includegraphics[height=7.5cm]{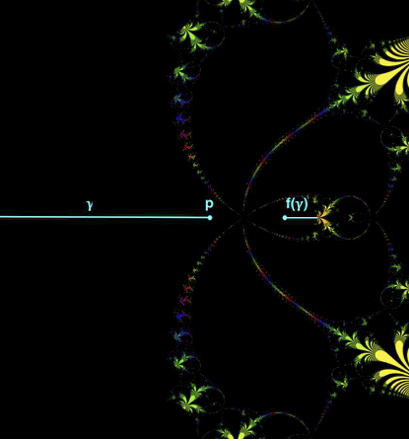}}
\caption{\small Escaping hairs with periodic singular end point.}
\label{marco}
\end{figure} 

We mention below some open question related to the previous discussion.

\begin{question}
Is there an example of a pre-periodic singular end point?
\end{question}

\begin{question}
Is there an example of an oscillating singular end point non periodic neither pre-periodic?
\end{question}

\begin{question} 
Is there an example of an escaping hair to $E$ such that $E$ is infinite?
\end{question}


 
 


\section{Example of escaping hairs with wandering singular end points}
\label{section6}


In this section we give two examples,  in the first  one   we show that for each of the finite essential singularities of the function  $g(z)=e^{\frac{1}{\sin z}}+z$, has infinitely many Baker domains attached to it. In the second one, for each finite essential singularity of  the function $f(z)=e^{\frac{1}{\sin z}}+z+2\pi$, there are infinitely many wandering domains attached to it. Our motivation to present these functions in class $\calk$, is to construct escaping hairs with wandering singular end points. 

\subsection{Properties of the functions $g$ and  $f$}

(1) The set $\{{\pi}k\}_{k \in {\Z}}$ is the set of the zeros of $\sin z$, thus  poles of $\frac{1}{\sin z}$, therefore essential singularities of $e^{\frac{1}{\sin z}}$ and so, for $g$ and $f$.

(2) To each essential singularity ${\pi}k$, $k\in\Z$, there are attached two tracts (due to the exponential function), one  is mapped to a punctured neighbourhood of $0$, and the other to a punctured neighbourhood of $\infty$. The points $0$ and $\infty$ are the only asymptotic singularities, see Figure \ref{tracts}.

(3)  These are tracts that intersect the real line, so we can talk of tracts at the right or left of the points ${\pi}k$, $k\in\Z$. Its dynamics is symmetric with respect to the real line. Moreover, tracts at the left of the points ${\pi}2k$ together with the tracts at the right of the points ${\pi}(2k+1)$ are mapped to a punctured neighbourhood of $0$. Also tracts at the right of the points ${\pi}2k$ together with the tracts at the left of the points ${\pi}(2k+1)$ are mapped to a punctured neighbourhood of $\infty$.\\

In the following section, we show that there are $g$-invariant domains contained in tracts of $g$ and they are the ones at the left of the points ${\pi}2k$, together with two tracts at $\infty$.

\begin{figure}[h]
\centerline{\includegraphics{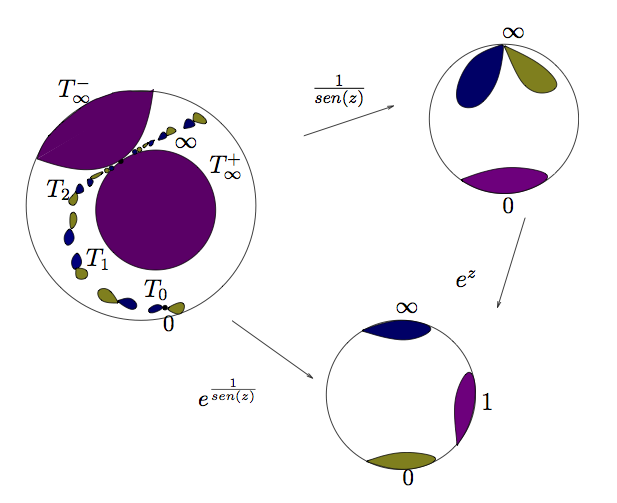}}
\caption{Tracts $T_{k}$ on yellow, $T^{+}_{\infty}$ and $T^{-}_{\infty}$ on  pink}
\label{tracts}
\end{figure} 

\subsection{Tracts containing invariant domains}

We will denote by $T_{k }$ the left tract  at $\pi 2k$ and by $T^{+}_{\infty }$ the upper tract at infinity. Since $g$ is symmetric respect the real line, there is also a lower tract at infinity, but the arguments are the same, so we omit details for this tract.

Consider $T^{+}_{\infty}=\{z: Img(z) > 3\}$ a logarithmic tract of $\sin z$ that is mapped to a pointed neighbourhood of $\infty$, by $1/\sin z$ to a pointed neighbourhood of $0$, which under the exponential function is mapped to a neighbourhood of $1$. Therefore $g(w)$ behaves as the function $w+1$, for $w \in  T^{+}_{\infty}$. It extends to an invariant Baker domain $B_{\infty}^+$ attached to infinity and containing 
$T^{+}_{\infty}$. Similarly, there is an invariant Baker domain attached to $\infty$, $B_{\infty}^-$. Clearly these Baker domains are simply parabolic.

\subsubsection{The tracts $T_{k}$}

In what follows we will give some general observations.\\

(1) Critical points of $g$ do not accumulate at the points ${\pi}2k$ near the real line, because the critical points satisfies:

\[
e^{\frac{1}{\sin z}}\left(-\cos z/\sin^2 z\right)+1=0,
\]

 that is, 

\[ 
 e^{\frac{1}{\sin z}}=\frac{\sin^{2}z}{\cos z}.
\] 
 
Neither of both functions oscillates near zero, since $\sin z\approx z$ at the points ${\pi}2k$, so $e^{1/z}$ does not oscillates (left hand of the equation) as well as $z^{2}$ (right hand of the equation).

(2) We identify $1/\sin z$ with $1/z$ in a very small neighbourhood of $0$. Let us consider $T_{k}$ a small disc at the left of ${\pi}2k$, so that it is sent to a left half plane by $1/z$ and to a small neighbourhood of $0$ by the exponential function. By the periodicity of $g$ it is enough to consider only $T_{0}$.

(i) If $T_{0}$ is bounded by the circle with center at $-{\pi}/8$ and radius ${\pi}/8$, $({\pi}/8)e^{it}-{\pi}/8$, then $(-{\pi}/4,0)=T_{0} \bigcap {\R}$. Thus $T_{0}$ goes under $1/z$ to the left half plane bounded by the vertical line with real coordinate $-\frac{1}{{\pi}/4}$.

(ii) Each circle, $C_{m}$, $\vert m\vert e^{it}+ im$, intersects $T_{0}$ in the curves $\sigma_{m}$, all tangent to the real line at $0$. Under $1/z$ , they go to $\frac{1}{me^{it}+im}=\frac{m\cos t}{m^{2}\cos^{2}t+m^{2}(\sin t+1)^{2}}-i\frac{1}{2m}$. In particular the curves $\sigma_{\frac{1}{4{\pi}k}}$ goes to half horizontal lines at hight $-2{\pi}ik$, see Figure \ref{new}. 

(iii) The function $g: T_{0} \rightarrow N_{0}$ sends each arc $\frac{1}{4{\pi}k}e^{it}+i\frac{1}{4{\pi}k}$ contained in $T_{0}$, to the real positive interval $(0, e^{-\frac{4}{\pi}})$, and the region in between two of such arcs to $N_{0}$, see Figure \ref{new}. 

(iv) Let $V_{0}$ the region contained in $T_{0}$ and is between the circles $C_{\frac{2}{\pi}}$ and $C_{-\frac{2}{\pi}}$, see Figure \ref{new}. 
 
\begin{figure}[h] 
\centerline{\includegraphics[height=9cm]{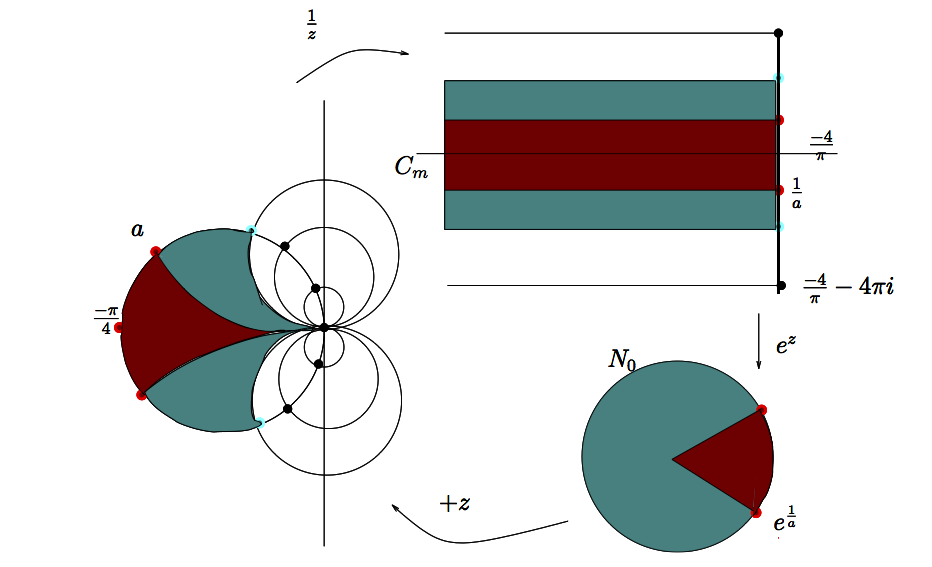}}
\caption{ On green  a fundamental region and  on red the corresponding absorbing domain $V_{0}$, $a$ as in the text.}
\label{new}
\end{figure}

From the previous observations, in the tract $T_0$ there are infinitely many regions $V_0^j$ for each $j\in\Z$. More over, for each singularity $\pi 2k$, there are infinitely many regions $V_k^j$ for $k$ and $j$ in $\Z$.

\begin{proposition} The regions $V_k^j$, $k,j\in \Z$, are absorbing domains of $g$.
\end{proposition}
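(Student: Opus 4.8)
The plan is to show that, after passing to the chart $w=1/z$ at the relevant singularity, $g$ acts on each $V_k^{j}$ as a small perturbation of the identity that pushes every point towards the essential singularity $\pi\,2k$, so that $V_k^{j}$ is forward invariant with $g^{n}\to\pi\,2k$ locally uniformly on it; this makes $V_k^{j}$ a subdomain of a Baker domain attached to $\pi\,2k$ and an absorbing domain for it. First I would reduce to one singularity: since $\sin(z+2\pi)=\sin z$ we have $g(z+2\pi)=g(z)+2\pi$, so $g$ commutes with $\tau(z)=z+2\pi$ and each $V_k^{j}$ is the translate $\tau^{k}(V_0^{j})$; hence it suffices to treat the tract $T_0$ at the origin. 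By observations (2)(i)--(ii), in the chart $w=1/z$ the tract $T_0$ is the half--plane $H=\{\operatorname{Re}w<-4/\pi\}$, the circle $C_m$ is the horizontal line $\{\operatorname{Im}w=-1/(2m)\}$, and each $V_0^{j}$ is a half--strip $S_j\subset H$ of height $\pi/2$ bounded horizontally by images of two of the $C_m$'s, running along a lift of the real axis on which $g$ already creeps towards the singularity, as $g(-\varepsilon)=-\varepsilon+e^{-1/\varepsilon}$ shows.

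The key estimate is that, conjugated by $w\mapsto1/w$, the map $g$ becomes
\[
G(w)=\frac{w}{\,1+w\,e^{1/\sin(1/w)}\,}=w-w^{2}e^{w}\bigl(1+o(1)\bigr)\qquad(\operatorname{Re}w\to-\infty),
\]
uniformly on $S_j$; here one uses that on $T_0$, where $|z|\le\pi/4$, one has $1/\sin z=1/z+\rho(z)$ with $|\rho|$ bounded by a small explicit constant (from $|z-\sin z|\le|z|^{3}/6+\cdots$ and $|\sin z|\ge\tfrac12|z|$), so that $|e^{1/\sin z}|=e^{\operatorname{Re}w}\,O(1)<e^{-4/\pi}O(1)<1$ and $\arg e^{1/\sin z}=\operatorname{Im}w+O(\text{small})$. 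From this near--identity form I would deduce that on $S_j$ one has $\operatorname{Re}G(w)<\operatorname{Re}w$ (since $\operatorname{Re}(w^{2}e^{w})=|w|^{2}e^{\operatorname{Re}w}\cos\theta>0$ there, the angle $\theta=2\arg w+\operatorname{Im}w$ being small modulo $2\pi$) and that the vertical displacement $\operatorname{Im}G(w)-\operatorname{Im}w=-|w|^{2}e^{\operatorname{Re}w}\sin\theta\,(1+o(1))$ points into $S_j$ along its horizontal edges; hence $G(S_j)\subset S_j$. Then for $z\in V_0^{j}$ the sequence $\operatorname{Re}\bigl(1/g^{n}(z)\bigr)$ decreases to $-\infty$, i.e.\ $g^{n}\to0$ locally uniformly on $V_0^{j}$ (here $0=\pi\,2\cdot0$ is the singularity carrying $T_0$); since $0\in B(g)\subset J(g)$, $\{g^{n}\}$ is normal on $V_0^{j}$, so it lies in a single Fatou component, and forward invariance makes that component a Baker domain with absorbing point $0$ (case (5) of the classification of periodic Fatou components). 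Thus $V_0^{j}$, and by periodicity every $V_k^{j}$, is an absorbing domain of $g$.

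The hard part will be the last sign analysis near the inner boundary $\{\operatorname{Re}w=-4/\pi\}$ of the tract, and for the strips $S_j$ lying far up the tract: there $|w|$ need not be large, so $|w^{2}e^{w}|$ is a genuine $O(1)$ quantity and $\theta$ is no longer small, and the ``moves into $S_j$'' estimate can fail near the corners of $S_j$. I would handle this by choosing the $V_k^{j}$ a little more conservatively than the literal circles $C_{\pm2/\pi}$ and $\partial T_0$ suggest --- replacing $T_0$ by a coaxial subdisc (equivalently, pushing the inner boundary to $\{\operatorname{Re}w=-R\}$ with $R$ large) and truncating each $S_j$ on the left to the region where $|w|^{2}e^{\operatorname{Re}w}$ is small --- which alters neither the Baker domains attached to the points $\pi\,2k$ nor the conclusion; alternatively one may adopt the standard, slightly weaker notion of an absorbing domain (the forward orbit of every compact subset of the Baker domain eventually enters and remains in it), for which the contraction $\operatorname{Re}G(w)<\operatorname{Re}w$ valid far in the tract already suffices.
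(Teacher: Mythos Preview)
Your approach is essentially the paper's: both reduce to the singularity at $0$ via the commutation $g(z+2\pi)=g(z)+2\pi$, replace $1/\sin z$ by $1/z$ near $0$, and establish $g(V_0)\subset V_0$ by a sign analysis of the displacement $g(z)-z\approx e^{1/z}$. The only difference is cosmetic---you pass to the chart $w=1/z$, where $V_0$ becomes a half-strip and the map reads $G(w)\approx w-w^{2}e^{w}$, whereas the paper stays in the $z$-plane and checks directly that for $w=-\alpha+iy$ one has $\operatorname{Im} g(w)=y+\rho\sin\bigl(-y/(\alpha^{2}+y^{2})\bigr)<y$, choosing the bounding circles $C_{\pm 2/\pi}$ precisely to make the sine negative; the paper does not pause over the inner-boundary or large-$j$ issues you flag, treating only the central region $V_0=V_0^{0}$ and leaving the rest implicit.
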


\begin{proof}
Since $g$ is $2{\pi}$-periodic, it is enough to prove the existence of the absorbing domain $V_{0} \subset T_{0}$. For that, consider $V_{0}$, a neighbourhood of the real line in $T_{0}$ which is bounded by two curves ${\sigma}_{r}$ and ${\sigma}_{-r}$, with $r$ small enough, to be specified bellow.

We shall prove $g(V_{0}) \subset V_{0}$.

(i) We are interested in the horizontal distance, for that, consider the image of the interval $(-{\pi}/4,0)$ under $e^{1/z}$, which is the ray $(0, e^{\frac{-4}{{\pi}}}=(0, \frac{1}{e^{\frac{1}{{\pi}/4}}})$. Therefore if
$-t \in (-{\pi}/4,0)$, then $g(t)=-t+\frac{1}{e^{1/2t}}$, which is carried towards the right of $-t$ and $-t+\frac{1}{e^{1/2t}}<0$ for all $t>0$; now, by continuity this happens also for a small neighbourhood $V_{0}$ of the interval $(-{\pi}/4,0)$ in $T_{0}$. See Figure \ref{new}.

(ii) We are interested in the vertical distance, the goal is to find $r>0$ such that $V_{0}$ is bounded by ${\sigma}_{r}$ and ${\sigma}_{-r}$. For that, consider a point $w=-{\alpha}+iy \in V_{0}$,  above the real line, so $y>0$. Then, $\frac{1}{w}=\frac{-{\alpha}}{{\alpha}^{2}+y^{2}}+i{\frac{-y}{{\alpha}^{2}+y^{2}}}$ so $e^{1/w}=e^{(\frac{-{\alpha}}{{\alpha}^{2}+y^{2}})}e^{i({\frac{-y}{{\alpha}^{2}+y^{2}}})}$, with $\rho=\vert e^{1/w} \vert=e^{(\frac{-{\alpha}}{{\alpha}^{2}+y^{2}})}$, therefore $Img(g(w))=y+{\rho}\sin(\frac{-y}{{\alpha}^{2}+y^{2}})$.

We want $y+{\rho}\sin(\frac{-y}{{\alpha}^{2}+y^{2}})<y$. Since ${\rho}>0$, so we have to choose $\sin(\frac{-y}{{\alpha}^{2}+y^{2}})<0$, therefore we can choose $-{\pi}/4 \leq -\frac{y}{{\alpha}^{2}+y^{2}} \leq 0$.

Now, consider the piece of circle $\sigma_{r}$, where $y$ belongs, that is, ${\alpha}^{2}+(y-r)^{2}=r^{2}$, so ${\alpha}^{2}+y^{2}=2ry$. This implies that, $-{\pi}/4 \leq -\frac{y}{2yr} \leq 0$ i.e. $r \geq \frac{2}{\pi}$. To define $V_{0}$, let us choose $r=\frac{2}{\pi}$.

\end{proof}

The Baker domains attached to $0$, denoted by ${B}_{0}^j$, are those containing $V_0^j$ as absorbing domains. By the periodicity of the sine function, ${B}_{k}^j={B}_{0}^j+2{\pi}k$, $k \in {\Z}$.\\

As consequence of the discussion in this section, we have the following examples.

\begin{example}
The function $g(z)=e^{\frac{1}{\sin z}}+z$ contains infinitely many Baker domains $B_k^j$ of doubly parabolic type attached to each of the essential singularities $2{\pi} k$ of $g$.  $k\in\Z$. Also $g$ has a Baker domains $B_{\infty}^+$ and $B_{\infty}^-$ attached to $\infty$ of simple parabolic type.
\end{example}

\begin{example}
The function $f(z)=e^{\frac{1}{\sin z}}+z+2\pi$ has attached to each essential singularity $2 \pi k$ infinitely many wandering domains, $B_k^j$, see Figures \ref{guillermo} and \ref{guillermo2}.
\end{example}

\begin{figure}[h!]
\centerline{\includegraphics{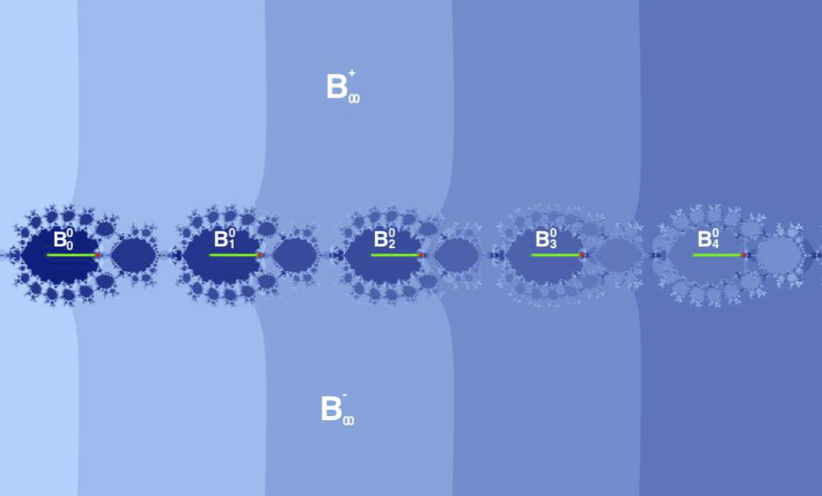}}
\caption{Escaping hairs with wandering singular end point on green and the singular end points on red in the dynamical plane.}
\label{guillermo}
\end{figure}

\begin{figure}[h!]
\centerline{\includegraphics[height=7cm]{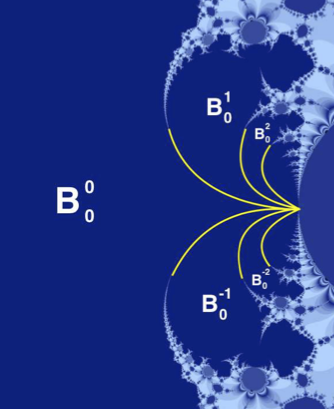}}
\caption{A detail of the dynamical plane of $f$ and $g$ showing the Fatou components $B_k^j$ close to zero. For $g$ these components are Baker domains and zero is their absorbing point, for $f$ they are wandering domains.}
\label{guillermo2}
\end{figure}

In what follows we give some problems  related to this section.
 
\begin{problem} Tracts at the right of ${\pi}(2k+1)$: In this case, we have that very near to the right of each singularity  ${\pi}(2k+1)$, $k \in {\Z}$; the function  $sin(z)$ is approximated by $-\frac{1}{z}$. Thus we have a tract  $g:T_{-\pi} \rightarrow N_{-\pi}$, with $T_{-\pi}$ a small disc at the right of $\pi$.

Have  the function $g(z)$ infinitely many Baker domains attached to each singularity ${\pi}(2k+1)$, $k \in {\Z}$, and whose intersection with the real line is empty?\\

\end{problem}

\begin{problem} Functions with periodic Baker domains: The function $z^{n}-1$ has critical points $0$ and $\infty$. Their zeros are the roots of unity, $w_{n}=e^{(\frac{2{\pi}i}{n})}$ and this points are the essential singularities of the maps $f_{n}(z)=e^{\frac{1}{z^{n}-1}}+(1/n)e^{(2{\pi}i/n)}z$.  

Has the function $f_{n}(z)$  a Baker domain of period $n$, each connected component attached to a different essential singularity?

\end{problem}

\subsection{Escaping hairs with wandering singular end points}

It is clear that each $B_k^j$ is contained in $I_g(g)$, with absorbing point $\pi 2k$, so they are contained in $I_{\pi 2k}(g)$, while  $B_{\infty}^{\pm}\subset I_{\infty}(g)$. According to Definition \ref{clasificacionerrantes}, we have that escaping points in all this Baker domains are not oscillating because they are escaping periodic of period 1.

In contrast, all wandering domains of $f$ are contained in $I_{\infty}(f)$. The interval $[-\pi/4,0)$ is contained in $B_0^0$, and by construction, it is an escaping hair with wandering singular end point $0$, with singular orbit $\{\pi 2k\}_{k\geq 0}$. 

\bigskip

\noindent{\bf{\it Acknowledgements.}} We wish to thank Professor I.N. Baker
because after 16 years that he had leaved this world he continues giving us
inspiration to continue working  on Complex Dynamics.


\begin{thebibliography}{99}


\bibitem{alexander} Alexander D.S., {\it A history of complex dynamics},
   Friedr. Vieweg and Sohn Verlagsgesellschaft mbH, Braunschweig/Wiesbaden, 1994.
   
\bibitem{ahlfors} Ahlfors L., Zur Theorie der \"Uberlagerungsfl\"achen, {\it Acta Mathematica}. {\bf 65} (1), (1935), 157–194  

 \bibitem{Ba} Baker I. N., The iteration of entire transcendental functions
   and the solutions of the functional equation $f(f) = F(z)$, {\it Math. Ann.} {\bf 129} (1955), 174-180.

\bibitem{baker0} Baker I.N., Fixpoints and iterates of entire functions, 
{\it Math. Zeitschr}. {\bf 71} (1959), 146-153.

 \bibitem{baker1a} Baker I. N., Repulsive fixed points of entire functions,
   {\it Math. Zeitschr}. {\bf 104} (1968), 252-256.
   
  \bibitem{baker3a} Baker I. N., Limit functions and sets of non-normality in iteration theory, {\it Ann. Acad. Sci. Fenn., Ser. A. I. Math.} {\bf 467} (1970), 1-11.  

\bibitem{bakerw} Baker I. N., An entire function which has wandering domains,
  {\it J. Austral. Math. Soc. Ser. A } {\bf 22} (1976), 173-176.

\bibitem{baker2a} Baker I. N., Wandering domains for maps of the punctured
  plane, {\it Ann. Acad. Sci. Fenn. Ser. A. I. Math}. {\bf 12} (1987), 191-198.
  
  \bibitem{baker2002} Baker I. N., Limit functions in wandering domains of meromorphic functions. {\it Ann. Acad. Sci. Fenn. Math.}. {bf 27} (2002), 499-505.

\bibitem{baker11} Baker I.N.,  Kotus J. and  L\"u Y., Iterates of meromorphic functions I, {\it Ergodic Theory Dynamical Systems}. {\bf 11} (1991), 241-248.
 
\bibitem{baker12} Baker I.N., Kotus J. and Yinian L\"u., Iterates of meromorphic functions II: Examples of wandering domains, {\it J. London. Math. Soc.} {\bf 42} (1990), 267-278. 

\bibitem{baker13} Baker I. N.,  Kotus J. and  L\"u Y., Iterates of
meromorphic functions III: Preperiodic domains, {\it Ergodic Theory and
Dynamical  Systems}. {\bf 11} (1991), 603-618. 

\bibitem{baker14}  Baker I.N., Kotus J. and  L\"u Y., Iterates of meromorphic
  functions IV: Critically finite functions, {\it Results in Mathematics}. {\bf
    22} (1992), 651-65.

\bibitem{bargman} Bargman D.,  Iteration of inner functions and boundary of
  components of the Fatou set,  {\it Transcendental Dynamics and Complex
    Analysis, London Math. Soc. Lect. Notes Ser.} 348 Cambridge Univ. Press
  (2008) 1-36.

\bibitem{pato} Baker I.N. and Dom\'{\i}nguez P., Boundaries of unbounded Fatou
  components of entire functions,  {\it Ann. Acad. Sci. Fenn. Ser. A. I. Math}. {\it J. Analysis}.  {\bf 24} (1999), 437-464.



\bibitem{bph} Baker I.N., Dom\'{\i}nguez P. and Herring M.E., Dynamics of 
functions meromorphic outside a small set,  {\it Ergodic Theory and Dynamical 
Systems}. {\bf 21} (2001), 647-672.

\bibitem{bakerwe}  Baker. I.N. and Weinreich J., Boundaries which arise in the
  dynamics of entire functions, {\it Revue Roumaine de Math\'ematiques Pures
    et Appliqu\'es}, {\bf 36} (1991), 413-420. 

\bibitem{baranskif} Baranski K. and Fagella N., Univalent Baker domains, {\it
      Nonlinearity}. {\bf 14} (2001), 411-429.


\bibitem{bergweiler1} Bergweiler W., Iteration of meromorphic 
functions, {\it Bull. Amer. Math. Soc. (N.S.)} {\bf 29} (1993), 151-188.
 
\bibitem{ber} Bergweiler W.,  Invariant domains and singularities, {\it
    Math. Proc. Cam. Phil. Soc}. {\bf 117}(3)  (1995), 525-532.

\bibitem{berg} Bergweiler W., The fast escaping set of quasi regular mappings, {\it Analysis and Mathematical Physics}.  {\bf 4} (1) (2014), 83-98.

\bibitem{KW} Bergweiler W. and Kotus J., On the Hausdorff dimension of the escaping set of certain meromorphic functions, {\it Trans. Amer. Math. Soc.} {\bf 364}, (2012), 5369-5394. 
 
\bibitem{berzhen} Bergweiler W. and Zhen, Jian-Hua.,  Some examples of Baker
  domains {\it Nonlinearity}. {\bf 25} (2012), 1033-1044.

\bibitem{bhattacharyya} Bhattacharyya P.,  Iteration of analytic functions,
  PhD Thesis University of London (1969).

\bibitem{bolsch} Bolsch A., Repulsive periodic points of meromorphic function, 
{\it  Complex Variables  Theory and Applications}. {\bf 31} (1996), 75-79.

\bibitem{andreas} Bolsch A.,  Iteration of meromorphic functions with 
countably many singularities. Dissertation, Berlin (1997).

\bibitem{bolsch1}  Bolsch A., Periodic Fatou components of meromorphic 
functions, {\it  Bull. London Math. Soc}. {\bf 31} (1999), 543-555. 

\bibitem{cayley} Cayley A., Applications of the Newton-Furier method to an imaginary root of an equation, {\it Quart. J. of Pure and Applied Math.} {\bf 16} (1879), 179-185.


\bibitem{cowen} Cowen C.C., Iteration and the  solution of functional
  equations for functions analytic in the unit disk, {\it
    Trans. Amer. Math. Soc.} {\bf 265} (1981), 69-95.
  
 \bibitem{cremer}  Cremer H., \"{U}ber  die Schr\"odersche Funktionalgleichung und das Schwarzsche Eckenabbildungsproblem, {\it Ber. Verh. S\"achs. Akad. Wiss. Leipzig, Math-Phys. Kl.} {\bf 84} (1932), 291-324.
 
  
\bibitem{crowe} Crowe W. D., Hasson R., Rippon P. J. and Strain-Clark P. E. D., On the structure of the Mandelbar set, {\it Nonlinearity} {\bf 2} (4) (1989).


\bibitem{cruz}  Cruz M., Dynamics of piecewise conformal automorphism of the
  Riemann sphere, {\it  Ergodic Theory and Dynamical Systems}. {\bf 25} (2005), 1767-1774. 
  
 \bibitem{devaney} Devaney R. \& Krych M., Dynamics of $exp(z)$, {\it Ergodic Theory and Dynamical Systems}. {\bf 4} (1984), 35-52.  
  

\bibitem{pat} Dom\'{\i}nguez P., Dynamics of transcendental
  meromorphic functions. {\it Ann. Acad. Sci. Fenn. Ser. A. I. Math}. {\bf 23}
 (1998), 225-250.

 \bibitem{fagellapat} Dom\'{\i}nguez P. and Fagella N.,  Existence of Herman
  Rings for Meromorphic functions. {\it Complex Variables Theory and 
Applications}. {\bf 49} (12) (2004), 851-870.

\bibitem{memopat} Dom\'{\i}nguez P. and Sienra G., Doubly connected wandering domains for meromorphic functions. {\it Complex Variables  and Elliptic Equations: An International Journal}. {\bf 55} (4) (2010), 407-416.

 \bibitem{adam1} Epstein A.L.,  Towers of finite type complex analytic maps.  
PhD thesis (1993) City University of New York.


\bibitem{eremenko1} Eremenko A., On the iteration of entire functions, {\it
  Banach Center, Polish Scientific Publishers, Warsaw}. {\bf 23} (1998), 339-345.

\bibitem{eremenkolyubich} Eremenko A. and Lyubich M.Yu., Examples of entire
functions with pathological dynamics. {\it J. London Math. Soc.} {\bf 2} 36
(1988), 458-468.
  
    
\bibitem{eremenko} Eremenko A. and Lyubich M.Yu,  The dynamics of analytic
    transformations, {\it Leningrad Math. Journal}. {\bf 1}  (1990), 563-632.

\bibitem{fagelac} Fagella N. and Henriksen C., Deformations of entire
  functions with Baker domains, {\it Discrete Continuos Dynamical Systems}. {\bf 15} 
  (2006), 379-394.

\bibitem{fatou0} Fatou P.,  Sur les \'{e}quations fonctionnelles, {\it Bull. Soc. Math. France}. {\bf 47} (1919), 161-271; {\bf 48} (1920), 33-94. 208-314. 


\bibitem{fatou} Fatou P.,  Sur l' it\'eration des fonctions transcendentes 
entier\`es, {\it Acta Math.} {\bf 47} (1926), 337-370.

\bibitem{go} Goetz A.,  Dynamics of piecewise isometries III, {\it  Journal Maths}. {\bf 44} (2000), 134-144. 

\bibitem{gro} Gr\"{o}tzch  H.,  \"{U}bber  m\"{o}glichst konforme Abblidungen von schlichten  Bereichen., {\it Ber. Verh. S\"achs. Akad. Wiss. Leipzig}. {\bf 84} (1932), 114-120. 

\bibitem{herman1} Herman M., Exemples de fractions ayant une dense sur sphere de Riemann., {\it Bull. Soc. Math. France}. {\bf 112} (1984), 93-142.

\bibitem{herman} Herman M.,  Are there critical points on the boundary of singular domains?, {\it Comm. Math. Phys.} {\bf 99} (1985), 593–612
 
\bibitem{herr} Herring M.E., An extension of the Julia-Fatou theory of 
iteration. PhD thesis (1994) Imperial College, London.

\bibitem{jeff} King-D\'{a}valos J.,  M\'{e}ndez-Lango H. and   Sienra-Loera G.,
  Some dynamical properties of $F(z)=z^2+a¯z$, {\it Qualitative Theory of
    Dynamical System}. {\bf 5} (1)  (2004), 101-120.

\bibitem{julia}  Julia G., M\'{e}moire sur l'it\'{e}ration des applications
  fonctionnelles, {\it J. Math. Pures et Appl}. {\bf 8} 1 (1918), 47-245. 
  
\bibitem{keen} Keen L.,  Dynamics of holomorphic self-maps of $\C^*$,   
  Holomorphic functions and moduli I,  MSRI  Publications 10, Springer, New York, 1988.


\bibitem{kisaka}  Kisaka M., On the connectivity of Julia sets of transcendental entire functions, {\it Bull. Josai Univ.} Special Issue {\bf 1} (1997), 77-87.

\bibitem{kisaka2}   Kisaka M and Shishikura  M., On multiply connected wandering domains of entire functions, {\it Transcendental Dynamics and Complex
    Analysis, London Math. Soc. Lect. Notes Ser.} 348 Cambridge Univ. Press
  (2008) 217-250.

\bibitem{kotus} Kotus J.,  Iterated holomorphic maps of the punctured plane,
  {\it Lecture Notes Math. Syst. and Ec.}  {\bf 287} (1987), 10-29.
  
\bibitem{kotus2} Kotus J., The domains of normality of holomorphic  self-maps of  
$\C^*$, {\it Annales Academia Scientiarum Fennicae Series A. I. Mathematica}. {\bf 15}  (1990), 329-340.
 

\bibitem{lauber} Lauber A.,  Bifurcation of Baker domains, {\it Nonlinearity}.
  {\bf 20} (2007), 1535-1545.

\bibitem{makienko} Makienko P.,  Iteration  of analytic functions in $\C^*$
  (Russian), {\it Dokl. Akad. Nauk. SSSR}. {\bf 297} (1987), 35-37.

\bibitem{marti} Mart\'{\i}-Pete D., The escaping set of transcendental self-maps of the punctured plane, {\it Ergodic Theory and Dynamical Systems}, (2016) 1-22.

\bibitem{martitesis} Mart\'{\i}-Pete D., Structural theorems for holomorphic self-maps of the punctured plane. PhD thesis (2016) Open University.



\bibitem{mac} McMullen C., Area and Hausdorff dimension of Julia sets of entire functions, {\it Trans. Amer. Math. Soc.} {\bf 300} (1987), 329-342.
  
\bibitem{milnor} Milnor J., Remarks on Iterated Cubic Maps, Experimental Mathematics, {\bf 1} (1) (1992), 5-24.

\bibitem{misi}  Misiurewicz, M., On iterates of $e^z$, {\it Ergodic Theory and Dynamical Systems}. {\bf  1} (1981), 103-106. 

\bibitem{marcotesis} Montes de Oca M. A., Aspectos din\'amicos de la familia $f_{\lambda,\mu}(z)=\lambda e^z + \mu /z$, Dissertation (2015) M\'exico, Universidad Nacional Aut\'onoma de M\'exico.

\bibitem{marcos} Montes de Oca M. A., Sienra G. and  King J., Baker domains of period two for the family $f_{\lambda,\mu}(z)=\lambda e^z+\mu/z$, {\it Int. J. of Bif. and Chaos}. {\bf 24} (5) (2014).

  
\bibitem{mu}  Mukhamedshin A.N.  Mapping the punctured plane with wandering domains, {\it Siberian Math. J.} {\bf 32} (2) (1991),  337-339.  

\bibitem{osborne} Osborne J.W., The structure of spider's web fast escaping sets, {\it Bulletin of the London Mathematical Society}. {\bf 44} (3) (2012), 503-519. 


\bibitem{radstrom} R$\stackrel{\circ}{\rm a}$dstr\"{o}m H., On the iteration
  of analytic functions, {\it Math. Scand.} {\bf 1} (1953), 85-92.

 \bibitem{rempe} Rempe L., Topological dynamics of exponential maps  on their
   escaping set, {\it Ergodic Theory and Dynamical Systems}. {\bf 26} (2006), 1939-1975.
  
 \bibitem{rempe1} Rempe L., On a question of Eremenko concerning escaping components of entire functions, {\it Bull. London Math. Soc.} {\bf 39} (4) (2007), 661-666. 
 
 \bibitem{rempe2} Rempe L., Connected escaping sets of exponential maps, {\it  Ann. Acad. Sci. Fenn. Math.} {\bf 36} (2011), 71-80.
 

\bibitem{rippon30} Rippon P., Baker domains of meromorphic functions, {\it
    Ergodic Theory and Dynamical Systems}. {\bf 26} (2006), 1225-1233.

\bibitem{rippon0} Rippon P.J. and Stallard G.,  Families of Baker domains I,
  {\it Nonlinearity}. {\bf 12} (1999), 1005-1012.
  
\bibitem{rippon}  Rippon P.J. and Stallard G.,  Families of Baker domains II,
  {\it Conformal Geometry Dynamics}. {\bf 3} (1999), 67-78.

\bibitem{rippon1} Rippon P.J. and Stallard G.M., Escaping points of meromorphic functions with a finite set of poles, {\it Journal D' Analyse Math\'{e}matique}. {\bf 96} (2005), 225-245. 

\bibitem{rippon2008} Rippon P. J. and Stallard G. M., On multiply connected wandering domains of meromorphic functions, {\it Journal of the London Mathematical Society}. {\bf 77} (2008), 405-423.

\bibitem{rippon2} Rippon P.J. and Stallard G.M., Escaping points of entire functions of small growth, {\it Mathematische Zeitschrift}. {\bf 281} (3), (2009), 557-570.


\bibitem{rippon3} Rippon P.J. and Stallard G.M., Slow escaping points of meromorphic functions, {\it Trans. Amer. Math. Soc.} {\bf 363} (8) (2011), 4171-4201.

\bibitem{rippon4} Rippon P.J. and Stallard G.M. Boundary of escaping Fatou components {\it Proceedings of the American Mathematical Society}. {\bf 139} (8) (2011), 2807-2820. 


\bibitem{rippon5} Rippon P.J. and Stallard G.M., Fast escaping points of entire functions, {\it Proc. London Math. Soc.} (3) 105:4 (2012), 787-820. 


\bibitem{RR} Rottenfu$\beta$er G., R{\"u}ckert J., Rempe L. and  Shleicher D., Dynamic Rays of bounded-type entire functions, {\it Ann, of Maths}. {\bf 2}  1:173 (2011),  77-125. 
 
\bibitem{sch} Schleicher, D., and J. Zimmer: Escaping points of exponential maps, {\it J. London Math. Soc.}  (2) 67:2  (2003), 380-400.


\bibitem{scho1} Schr\"oder E., \"Uber unendlich viele algorithmen zur aufl\"osung der gleichungen,  {\it Math Ann.} {\bf 2} (1870), 317-365. 

\bibitem{scho2} Schr\"oder E., \"Uber iterite funktionen,  {\it Math Ann.} {\bf 3} (1871), 296-332.


\bibitem{shishi} Shishikura M.,  On the quasi-conformal surgery of rational  
functions, {\it Ann. Scient. \'Ec. Norm. Sup.} {\bf 20} (4) (1987), 1-29. 


\bibitem{six1}  Sixsmith, D., Entire functions for which the escaping set is a spider's web, {\it Math. Proc. Cambridge Philos. Soc.} {\bf 151} (3) (2011), 551-571.

\bibitem{six2} Sixsmith, D.,  Maximally and non-maximally fast escaping points of transcendental entire functions, {\it  Math. Proc. Cambridge Philos. Soc.}  {\bf 158} (2) (2015), 365-383. 



\bibitem{stallard} Stallard G.M., Dimensions of Julia sets of transcendental meromorphic functions, {\it Transcendental dynamics and complex analysis}, 425-446, London Math Soc. Lecture Notes Ser.,348, {\it Cambridge University Press, Cambridge}, (2008).

\bibitem{sullivan} Sullivan D., Quasiconformal homeomorphisms and dynamics 
I: Solutions of the Fatou-Julia problem on wandering domains, 
{\it  Ann. of Math}. {\bf 122} (3) (1985), 401-418.

\bibitem{teich} Teichm\"uller O.,  Bestimmung der extremalen quasikonformen Abbildungen und quadratische differentiale. {\it Abh. Preuss: Akad. Wiss, Math-Naturw. Kl.} {\bf 22} (1939), 1-197.

\bibitem{zhengbook} Zhang Guan- Hou,  {\it Theory of entire and meromorphic
    functions: Deficent  and asymptotic value  and directions}. Traslation of
  Mathematical Monographs Vol. 122,  1993.

 

\end{thebibliography}
\end{document}